\numberwithin{equation}{section}
\theoremstyle{plain}
	\newtheorem{theorem}{Theorem}[section] 
	\newtheorem{proposition}[theorem]{Proposition}       
	\newtheorem{lemma}[theorem]{Lemma}
	\newtheorem{corollary}[theorem]{Corollary}
\theoremstyle{definition}
	\newtheorem{definition}[theorem]{Definition}
	\newtheorem{remark}{Remark}[section]
\theoremstyle{remark}
    \newtheorem*{thx}{Acknowledgments}
\renewenvironment{proof}{\smallskip\noindent\emph{\textbf{Proof.}}%
  \hspace{1pt}}{\hspace{-5pt}{\nobreak\quad\nobreak\hfill\nobreak%
    $\square$\vspace{2pt}\par}\smallskip\goodbreak}
\newcommand{\limit}[2]{{\ \underset{#1 \to #2}{\longrightarrow} \ }}
\newcommand{\ds}[1]{\displaystyle{#1}}
\newcommand{\1}{\mathbf{1}} \renewcommand{\d}[1]{\mathinner{\mathrm{d}{#1}}} 
\newcommand{\p}{\partial} \newcommand{\eps}{\mathrm{\varepsilon}}
\newcommand{\N}{\mathbb{N}} \newcommand{\Z}{\mathbb{Z}} \newcommand{\R}{\mathbb{R}} 
\newcommand{\Czero}{\mathbf{C}} 
\newcommand{\Ck}[1]{\mathbf{C}^{#1}} 
\newcommand{\Cc}[1]{\mathbf{C}_\mathbf{c}^{#1}}
\newcommand{\Lip}{\mathbf{Lip}} 
\renewcommand{\L}[1]{\mathbf{L}^{#1}} 
\newcommand{\Lloc}[1]{\mathbf{L}_{\mathbf{loc}}^{#1}} 
\newcommand{\W}[2]{\mathbf{W}^{#1, #2}} 
\newcommand{\BV}{\mathbf{BV}} 
\newcommand{\BVloc}{\mathbf{BV}_{\mathbf{loc}}}
\newcommand{\TV}{\mathbf{TV}}
\newcommand{\bK}{\mathbf{K}}
\newcommand{\bF}{\mathbf{F}}
\newcommand{\bD}{\mathbf{D}}
\newcommand{\bB}{\mathbf{B}}
\newcommand{\cR}{\mathcal{R}}
\newcommand{\cP}{\mathcal{P}}
\DeclareFontFamily{U}{mathx}{\hyphenchar\font45}
\DeclareFontShape{U}{mathx}{m}{n}{
      <5> <6> <7> <8> <9> <10>
      <10.95> <12> <14.4> <17.28> <20.74> <24.88>
      mathx10
      }{}
\DeclareSymbolFont{mathx}{U}{mathx}{m}{n}
\DeclareMathAccent{\widecheck}{0}{mathx}{"71}
\begin{document}

\title{\textbf{Influence of a slow moving vehicle on traffic: Well-posedness and approximation for a mildly non-local model}}

\author{Abraham Sylla$^1$}

\date{}

\maketitle

\footnotetext[1]{\texttt{Abraham.Sylla@lmpt.univ-tours.fr} \\
Institut Denis Poisson, CNRS UMR 7013, Université de Tours, Université d'Orléans \\
Parc de Grandmont, 37200 Tours cedex, France \\
ORCID number: 0000-0003-1784-4878}

\thispagestyle{empty}

\begin{abstract}
    In this paper, we propose a macroscopic model that describes the influence of a slow moving large vehicle on road traffic. The model consists of a scalar 
	conservation law with a non-local constraint on the flux. The constraint level depends on the trajectory of the slower vehicle which is given by an ODE 
	depending on the downstream traffic density. After proving well-posedness, we first build a finite volume scheme and prove its convergence, and then investigate 
	numerically this model by performing a series of tests. In particular, the link with the limit local problem of [M. L. Delle Monache and P. Goatin, 
	\emph{J. Differ. Equ.} \textbf{257} (2014), 4015--4029] is explored numerically.

\end{abstract}

\textbf{2020 AMS Subject Classification:} 35L65, 76A30, 65M12.

\textbf{Keywords:} Scalar Conservation Law, Nonlocal Point Constraint, 
Finite Volume Scheme.

\tableofcontents

\newpage

\clearpage
\pagenumbering{arabic}

\section{Introduction}

Delle Monache and Goatin developed in \cite{DMG2014} a macroscopic model aiming at describing the situation in which a slow moving large vehicle -- a bus for 
instance -- reduces the road capacity and thus generates a moving bottleneck for the surrounding traffic flow. Their model is given by a Cauchy problem for 
Lightwill-Whitham-Richards scalar conservation law in one space dimension with local point constraint. The constraint is prescribed along the slow vehicle 
trajectory $(y(t),t)$, the unknown $y$ being coupled to the unknown $\rho$ of the constrained LWR equation. Point constraints were introduced in 
\cite{CR2005, CG2007} to account for localized in space phenomena that may occur at exits and which act as obstacles. The constraint in the model of \cite{DMG2014} 
depends upon the slow vehicle speed $\dot y$, where its position $y$ verifies the following ODE
\begin{equation}
	\label{A}
	\dot y(t) = \omega \left( \rho(y(t)^{+},t) \right).
    \tag{A}
\end{equation}

Above, $\rho = \rho(x,t) \in [0,R]$ is the traffic density and $\omega: [0,R] \to \R^{+}$ is a nonincreasing Lipschitz continuous function which 
links the traffic density to the slow vehicle velocity. Delle Monache and Goatin proved an existence result for their model in \cite{DMG2014} with a 
wave-front tracking approach in the $\BV$ framework. Adjustments to the result were recently brought by Liard and Piccoli in \cite{LP2020}. Despite the step forward 
made in \cite{DMG2017}, the uniqueness issue remained open for a time. Indeed, the appearance of the trace $\rho(y(t)^{+},t)$ makes it fairly difficult to get a 
Lipschitz continuous dependency of the trajectory $y=y(t)$ from the solution $\rho=\rho(x,t)$. Nonetheless, a highly nontrivial uniqueness result was achieved by 
Liard and Piccoli in \cite{LP2018}. To describe the influence of a single vehicle on the traffic flow, the authors of \cite{LMP2011} proposed a PDE-ODE 
coupled model without constraint on the flux for which they proposed in \cite{BCLMP2018} two convergent schemes. In the present paper, we consider a modified model 
where the point constraint becomes non-local, making the velocity of the slow vehicle depend on the mean density evaluated in a small vicinity ahead the driver. 
More precisely, instead of \ref{A}, we consider the relation
\begin{equation}
	\label{B}
	\dot y(t) = \omega \left( \int_{\R} \rho(x+y(t),t) \mu(x) \d{x} \right),
    \tag{B}
\end{equation}

where $\mu \in \BV(\R, \R^{+})$ is a weight function used to average the density. From the mathematical point of view, this choice makes the study of the 
new model easier. Indeed, the authors of \cite{ADR2014, ADRR2016, ADRR2018} put forward techniques for full well-posedness analysis of similar models with non-local
point constraints. From the modeling point of view, considering \ref{B} makes sense for several reasons outlined in Section \ref{discussion}.

The paper is organized as follows. Sections \ref{Section1} and \ref{Section2} are devoted to the proof of the well-posedness of the model. In Section \ref{Section3} 
we introduce the numerical finite volume scheme and prove its convergence. An important step of the reasoning is to prove a $\BV$ regularity for the 
approximate solutions. It serves both in the existence proof, and it is central in the uniqueness argument. In that optic, the appendix is essential. Indeed, it is 
devoted to the proof of a $\BV$ regularity for entropy solutions to a large class of limited flux models. Let us stress that we highlight the 
interest of the $\BVloc$ discrete compactness technique of Towers \cite{TowersOSLC} in the context of general discontinuous-flux problems. In the numerical 
section \ref{Section4}, first we perform numerical simulations to validate our model. Then we investigate both qualitatively and quantitatively the proximity 
between our model -- in which we considered \ref{B} -- as $\delta \to \mu_{0^+}$ and the 
model of \cite{DMG2014} in which the authors considered \ref{A}.

\newpage

\section{Model, notion of solution and uniqueness}
\label{Section1}

\subsection{Model in the bus frame}

Note that we find it convenient to study the problem in the bus frame, which means setting $X = x-y(t)$ in the model of Delle Monache and Goatin in 
\cite{DMG2014}. Keeping in mind what we said above about the non-local constraint, the problem we consider takes the following form:
\begin{equation}
	\label{AS2020}
	\left\{
		\begin{array}{lcr}
			\p_{t}\rho + \p_{x} \left(F(\dot y(t),\rho)\right) = 0 & & \R \times \mathopen]0,T \mathclose[ \\[5pt]
			\rho(x,0) = \rho_o(x+y_o) & & x \in \R \\[5pt]
			\left. F(\dot y(t),\rho) \right|_{x=0} \leq Q(\dot y(t)) & & t \in \mathopen]0,T \mathclose[ \\[5pt]
			\ds{\dot y(t) = \omega \left( \int_{\R} \rho(x,t) \mu(x) \d{x} \right)} & & t \in \mathopen]0,T \mathclose[ \\[5pt]
			y(0) = y_o. & & 
		\end{array}
	\right.
\end{equation}

Above, $\rho = \rho(x,t)$ denotes the traffic density, of which maximum attainable value is $R > 0$, and 
\[
	F(\dot y(t),\rho) = f(\rho) - \dot y(t)\rho
\]

denotes the normal flux through the curve $x = y(t)$. We assume that 
$f \in \Lip([0, R], \R^+)$ is bell-shaped, which is a commonly used assumption in 
traffic dynamics:
\begin{equation}
	\label{bell_shaped}
	f(0) = f(R) = 0, \; \exists! \; \overline{\rho} \in \mathopen]0,R \mathclose[, 
    \; f'(\rho)(\overline{\rho}-\rho) > 0 \; \text{for a.e.} \; \rho \in \mathopen]0,R \mathclose[.
\end{equation}

In \cite{DMG2014}, the authors chose the function $\ds{Q(s) = \alpha \times \left(\frac{1-s}{2} \right)^2}$ to prescribe the maximal flow allowed through a 
bottleneck located at $x=0$. The parameter $\alpha \in \mathopen]0, 1\mathclose[$ was 
giving the reduction rate of the road capacity due to the presence of the slow vehicle. We use 
the $s$ variable to stress that the value of the constraint is a function of the speed of the slow vehicle. In the sequel the $s$ variable will refer to 
quantities related to the slow vehicle velocity. Regarding the function $Q$, we can allow for more general choices. Specifically,
\[
	Q : \left[0,\|\omega\|_{\L{\infty}} \right] \to \R^{+}
\]

can be any Lipschitz continuous function. It is a well known fact that in general, the total variation of an entropy solution to a constraint Cauchy problem may 
increase (see \cite[Section 2]{CG2007} for an example). However, this increase can be controlled if the constraint level does not reach the maximum level. 
A mild assumption on $Q$ (see Assumption \eqref{level_constraint_splitting} below) 
will guarantee availability of $\BV$ bounds, provided we suppose that
\[
	\rho_o \in \L{1} \cap \BV(\R, [0,R]).
\]

\subsection{Notion of solution}

Throughout the paper, we denote by
\[
	\Phi(a,b) = \text{sign}(a-b)(f(a) - f(b)) \quad \text{and} \quad 
    \Phi_{\dot y(t)}(a,b) = \Phi(a,b) - \dot y(t) |a-b|
\]

the entropy fluxes associated with the Kru{\v{z}}kov entropy $\rho \mapsto |\rho-k|$, for all $k \in [0,R]$, see \cite{Kruzhkov1970}. Following 
\cite{DMG2014, CG2007, AGS2010, CGS2013}, we give the following definition of solution for Problem \eqref{AS2020}. 

\begin{definition}
	\label{AWS}
	A couple $(\rho,y)$ with $\rho \in \L{\infty}(\R \times \mathopen]0,T \mathclose[, \R)$ and $y \in \W{1}{\infty}(\mathopen]0,T \mathclose[, \R)$ is an admissible weak solution to \eqref{AS2020} if
    
	(i) the following regularity is fulfilled:
	\begin{equation}
		\label{continuous_in_l1}
		\rho \in \Czero([0,T], \Lloc{1}(\R, \R));
	\end{equation}

	(ii) for all test functions $\varphi \in \Cc{\infty}(\R \times \R^{+}, \R^+)$ 
    and $k \in [0,R]$, the following entropy inequalities 
	are verified for all $0 \leq \tau < \tau' \leq T$:
	\begin{equation}
		\label{EI}
		\begin{aligned}
			& \int_{\tau}^{\tau'} \int_{\R} |\rho-k| \p_{t}\varphi + \Phi_{\dot y(t)}(\rho,k) \p_{x} \varphi \; \d x \d t  
			+ \int_{\R}|\rho(x,\tau)-k|\varphi(x,\tau) \d x \\
			& - \int_{\R}|\rho(x,\tau')-k|\varphi(x,\tau') \d x + 2 \int_{\tau}^{\tau'} \cR_{\dot y(t)}(k,q(t)) \varphi(0,t) \d t \geq 0,
		\end{aligned}
	\end{equation}

	where
	\[
		\cR_{\dot y(t)}(k,q(t)) :=  F(\dot y(t),k) - \min \left\{ F(\dot y(t),k),q(t) \right\} \quad \text{and} \quad q(t) := Q(\dot y(t));
	\]

	(iii) for all test functions $\psi \in \Ck{\infty}([0,T], \R^+)$ and some given 
    $\varphi \in \Cc{\infty}(\R, \R)$ which verifies $\varphi(0)=1$, the 
	following weak constraint inequalities are verified for all 
    $0 \leq \tau < \tau' \leq T$:
	\begin{equation}
		\label{CI}
		\begin{aligned}
			& - \int_{\tau}^{\tau'} \int_{\R^{+}} \rho \p_{t} (\varphi \psi) + F(\dot y(t),\rho) \p_{x}(\varphi \psi) \d x \d t 
			- \int_{\R^{+}} \rho(x,\tau) \varphi(x) \psi(\tau) \d x \\
			& + \int_{\R^{+}} \rho(x,\tau') \varphi(x) \psi(\tau') \d x \leq \int_{\tau}^{\tau'} q(t) \psi(t) \d t;
		\end{aligned}
	\end{equation}

	(iv) the following weak ODE formulation is verified for all $t \in [0,T]$:
	\begin{equation}
		\label{ODE}
		y(t) = y_o + \int_{0}^{t} \omega \left( \int_{\R} \rho(x,s) \mu(x) \d x \right) \d s.
	\end{equation}
\end{definition}

\begin{definition}
	\label{BVRS}
	We will call $\BV$-regular solution any admissible weak solution $(\rho,y)$ to the Problem \eqref{AS2020} which also verifies
	\[
		\rho \in \L{\infty}(\mathopen]0,T \mathclose[, \BV(\R, \R)).
	\]
\end{definition}

\begin{remark}
	\label{Rk1}
	It is more usual to formulate \eqref{EI} with 
    $\varphi \in \Cc{\infty}(\R \times [0, T\mathclose[)$, $\tau = 0$ and $\tau' = T$. The equivalence between the two 
	formulations is due to the regularity \eqref{continuous_in_l1}. 
\end{remark}

\begin{remark}
	\label{Rk2}
	As it happens, the time-continuity regularity \eqref{continuous_in_l1} is actually a consequence of inequalities \eqref{EI}. Indeed, we will use the result 
	\cite[Theorem 1.2]{CancesGallouet2011} which states that if $\Omega$ is an open subset of $\R$ and if for all test functions 
	$\varphi \in \Cc{\infty}(\Omega \times [0, T\mathclose[, \R^+)$ and $k \in [0,R]$, $\rho$ satisfies the following entropy inequalities:
	\[
		\int_{0}^{T} \int_{\Omega} |\rho-k| \p_{t}\varphi + \Phi_{\dot y(t)}(\rho,k) \p_{x}\varphi \; \d x \d t 
		+ \int_{\Omega}|\rho_o(x)-k|\varphi(x,0) \d x  \geq 0,
	\]

	then $\ds{\rho \in \Czero([0,T], \Lloc{1}(\Omega, \R))}$. Moreover, since $\rho$ is bounded and $\overline{\Omega} \backslash \Omega$ has a Lebesgue measure $0$, 
	$\rho \in \Czero([0,T], \Lloc{1}(\overline{\Omega}, \R))$. We will use this remark several times in the sequel of the paper, with $\Omega = \R^{*}$. 
\end{remark}

\begin{remark}
	Any admissible weak solution $(\rho,y)$ to Problem \eqref{AS2020} is also a distributional solution to the conservation law in \eqref{AS2020}. Therefore, 
	inequalities \eqref{CI} imply the following ones for all $0 \leq \tau < \tau' \leq T$:
	\[
		\begin{aligned}
			& \int_{\tau}^{\tau'} \int_{\R^{-}} \rho \p_{t}( \varphi \psi) + F(\dot y(t),\rho) \p_{x}(\varphi \psi) \d x \d t + 
			\int_{\R^{-}} \rho(x,\tau) \varphi(x) \psi(\tau) \d x \\
			& - \int_{\R^{-}} \rho(x,\tau') \varphi(x) \psi(\tau') \d x \leq \int_{\tau}^{\tau'} q(t) \psi(t) \d t,
		\end{aligned}
	\]

	where $\varphi$ and $\psi$ are such as described in Definition \ref{AWS} (iii).
\end{remark}

The interest of weak formulations \eqref{CI}-\eqref{ODE} for the flux constraint and for the ODE governing the slow vehicle lies in their stability with respect 
to $\rho$. Formulation \eqref{EI}~--~\eqref{ODE} is well suited for passage to the limit of a.e. convergent sequences of exact or approximate solutions.

\subsection{Uniqueness of the BV-regular solution}

In this section, we prove stability with respect to the initial data and uniqueness for $\BV$-regular solutions to Problem \eqref{AS2020}. We start with the

\begin{lemma}
	\label{bus_BV}
    If $(\rho,y)$ is an admissible weak solution to Problem \eqref{AS2020}, then $\dot y \in \W{1}{\infty}(\mathopen]0,T \mathclose[, \R)$. In particular, $\dot y \in \BV([0,T], \R)$.
\end{lemma}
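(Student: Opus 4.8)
The plan is to read $\dot y$ off from the weak ODE formulation \eqref{ODE} and transfer the needed time-regularity from $\rho$ to $\dot y$ through $\omega$. Writing $I(t) := \int_{\R} \rho(x,t)\mu(x)\,\d x$, relation \eqref{ODE} gives $\dot y(t) = \omega(I(t))$ for a.e.\ $t$, and since $\omega \in \Lip$, it suffices to prove that $I$ is Lipschitz continuous on $[0,T]$: then $\dot y = \omega \circ I$ is Lipschitz, i.e.\ $\dot y \in \W{1}{\infty}(\mathopen]0,T\mathclose[,\R)$, and a Lipschitz function on a bounded interval lies in $\BV$, which yields the last assertion.

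To estimate $I(t)-I(s)$ I would use that, as observed in the Remark above, $(\rho,y)$ is a distributional solution of the conservation law in \eqref{AS2020}; in particular there is no defect measure at the constraint point $x=0$, so one may integrate by parts freely against spatial weights. Testing the distributional formulation with $\psi(x)\theta(t)$, where $\theta$ approximates $\1_{[s,t]}$, and using the time-continuity \eqref{continuous_in_l1} to pass to the limit in the boundary terms, one obtains for all $0\le s<t\le T$ and every $\psi \in \Cc{\infty}(\R,\R)$ the slab identity
\begin{equation*}
	\int_{\R} \rho(x,t)\psi(x)\,\d x - \int_{\R} \rho(x,s)\psi(x)\,\d x = \int_{s}^{t} \int_{\R} F(\dot y(\tau),\rho(x,\tau))\,\psi'(x)\,\d x\,\d\tau.
\end{equation*}
Since $\mu$ is only $\BV$ (hence not admissible as $\psi$), I would apply this to a mollification $\mu_{\eps} = \mu * \eta_{\eps} \in \Cc{\infty}$, exploiting $\|\mu_{\eps}'\|_{\L{1}} \le \TV(\mu)$ and $\mu_{\eps} \to \mu$ in $\L{1}$. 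Bounding the right-hand side by $\|F(\dot y,\rho)\|_{\L{\infty}}\,\TV(\mu)\,|t-s|$ and letting $\eps\to 0$ (the left-hand side converges because $\rho \in \L{\infty}$ and $\mu_{\eps}\to\mu$ in $\L{1}$) gives
\begin{equation*}
	|I(t)-I(s)| \le \|F(\dot y,\rho)\|_{\L{\infty}(\R\times\mathopen]0,T\mathclose[)}\,\TV(\mu)\,|t-s|.
\end{equation*}
It remains to bound the flux uniformly: since $0\le \rho\le R$ and $0 \le \dot y = \omega(I) \le \|\omega\|_{\L{\infty}}$, one has $\|F(\dot y,\rho)\|_{\L{\infty}} \le \|f\|_{\L{\infty}} + R\,\|\omega\|_{\L{\infty}} < \infty$, so $I$ is Lipschitz and $\dot y = \omega\circ I$ is Lipschitz with constant at most $\Lip(\omega)\big(\|f\|_{\L{\infty}} + R\,\|\omega\|_{\L{\infty}}\big)\TV(\mu)$.

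The main obstacle is the low regularity of the weight: because $\mu \in \BV$ only, the formal computation $\tfrac{\d}{\d t} I(t) = \int_{\R} F(\dot y(t),\rho)\,\d\mu'(x)$ must be justified through the mollification and passage-to-the-limit argument above, and one must be sure the slab identity remains legitimate across the constraint point $x=0$ — which is precisely what the distributional-solution property guarantees (no singular contribution at $x=0$). A secondary technical point arises if $\mu$ is not compactly supported: a spatial truncation $\mu_{\eps}\chi_{n}$ with $\chi_{n}\to 1$ is then needed, the truncation error $\int_{\R} F\,\mu_{\eps}\chi_{n}'\,\d x$ being controlled by $\mu \in \L{1}$ and the uniform flux bound.
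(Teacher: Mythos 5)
Your proof is correct and follows essentially the same route as the paper: both regularize $\mu$ by smooth functions with $\L{1}$ convergence and total variation controlled by $\TV(\mu)$, exploit the fact that $\rho$ is a distributional solution to differentiate (or slab-integrate) the smoothed average $t \mapsto \int_{\R}\rho(x,t)\mu_{\eps}(x)\,\d x$, obtain a Lipschitz bound of order $\bigl(\|f\|_{\L{\infty}} + R\|\omega\|_{\L{\infty}}\bigr)\TV(\mu)$ uniform in the regularization, and pass to the limit before composing with the Lipschitz function $\omega$. The differences (mollification with truncation instead of an abstract approximating sequence in $\BV\cap\Cc{\infty}$, and an explicit slab identity instead of a weak-derivative computation for $\xi_n$) are purely presentational.
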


\begin{proof}
	Denote for all $t \in [0,T]$,
	\[
		s(t) := \omega \left( \int_{\R} \rho(x,t) \mu(x) \d x \right).
	\]

	Since $\mu \in \L{1}\cap \L{\infty}(\R, \R)$ and 
    $\rho \in \Czero([0,T], \Lloc{1}(\R, \R))$, $s$ is continuous on $[0,T]$. By definition, $y$ 
	satisfies the weak ODE formulation \eqref{ODE}. Consequently, for a.e. $t \in \mathopen]0,T \mathclose[$, $\dot y(t) = s(t)$. We are going to prove that $s$ is Lipschitz continuous 
	on $[0,T]$, which will ensure that $\dot y \in \W{1}{\infty}(\mathopen]0,T \mathclose[, \R)$.
	Since $\mu \in \BV(\R, \R)$, there exists a sequence $\ds{(\mu_{n})_{n \in \N} \subset \BV \cap \Cc{\infty}(\R, \R)}$ such that:
	\[
        \| \mu_{n} - \mu \|_{\L{1}} \limit{n}{+\infty} 0 \quad \text{and} \quad \TV(\mu_{n}) \limit{n}{+\infty} \TV(\mu).
	\]

	Introduce for all $n \in \N$ and $t \in [0,T]$, the function
	\[
		\xi_{n}(t) = \int_{\R} \rho(x,t) \mu_{n}(x) \d x.
	\]

	Fix $\psi \in \Cc{\infty}(\mathopen]0,T \mathclose[, \R)$. Since $\rho$ is a distributional solution to the conservation law in \eqref{AS2020}, we have:
	\[
		\begin{aligned}
			\forall n \in \N, \quad \int_{0}^{T} \xi_{n}(t) \dot \psi(t) \d t 
			& = \int_{0}^{T} \int_\R \rho \p_{t}(\psi \mu_{n}) \d x \d t \\
			& = - \int_{0}^{T} \int_\R F(\dot y(t),\rho) \p_{x}(\psi \mu_{n}) \d x \d t \\
			& = - \int_{0}^{T} \left( \int_\R F(\dot y(t),\rho) \mu_{n}'(x) \d x \right) \psi(t) \d t,
		\end{aligned}
	\]

	which means that for all $n \in \N$, $\xi_{n}$ admits a weak derivative 
    and that for a.e. $t \in \mathopen]0,T \mathclose[$,
	\[
		\dot \xi_{n}(t) = \int_\R F(\dot y(t),\rho) \mu_{n}'(x) \d x.
	\]

	In particular, since both the sequences $(\|\mu_{n}\|_{\L{1}})_{n}$ and $(\TV(\mu_{n}))_{n}$ are bounded -- say by $C > 0$ -- we also have for 
	all $n \in \N$,
	\[
		\|\xi_{n}\|_{\L{\infty}} \leq RC \quad \text{and} \quad \|\dot \xi_{n}\|_{\L{\infty}} \leq C(\|f\|_{\L{\infty}} + \|\omega\|_{\L{\infty}}R).
	\]

    Therefore, the sequence $(\xi_{n})_{n}$ is bounded in $\W{1}{\infty}(\mathopen]0,T \mathclose[, \R)$. Now, for all $t,\tau \in [0,T]$ and $n \in \N$, the triangle 
    inequality yields: 
	\[
		\begin{aligned}
			\left| s(t) - s(\tau) \right|
            & \leq 2 \|\omega'\|_{\L{\infty}} R \|\mu_{n}-\mu\|_{\L{1}} 
            + \|\omega'\|_{\L{\infty}} \left| \int_{\R} (\rho(x,t)-\rho(x,\tau)) \mu_{n}(x) \d{x} \right| \\
			& = 2 \|\omega'\|_{\L{\infty}} R \|\mu_{n}-\mu\|_{\L{1}} + \|\omega'\|_{\L{\infty}} | \xi_{n}(t) - \xi_{n}(\tau)| \\
			& \leq 2 \|\omega'\|_{\L{\infty}} R \|\mu_{n}-\mu\|_{\L{1}} + \underbrace{C \|\omega'\|_{\L{\infty}} (\|f\|_{\L{\infty}}
			+ \|\omega\|_{\L{\infty}}R)}_{:=K} |t-\tau|.
		\end{aligned}
	\]

    Letting $n \to + \infty$, we get that for all $t,\tau \in [0,T]$, $\ds{|s(t) - s(\tau)| \leq K|t - \tau|}$, which proves that $s$ is Lipschitz continuous on 
    $[0,T]$. The proof of the statement is completed.
\end{proof}

Before stating the uniqueness result, we make the following additional assumption:
\begin{equation}
	\label{bell_shaped_F}
	\forall s \in [0,\|\omega\|_{\L{\infty}}], \ \underset{\rho \in [0,R]}{\text{argmax}} \ F(s, \rho) > 0.
\end{equation}

This ensures that for all $s \in [0,\|\omega\|_{\L{\infty}}]$, the function $F(s,\cdot)$ verifies the bell-shaped assumptions \eqref{bell_shaped_appendix}. 
For example, when considering the flux $f(\rho) = \rho(R-\rho)$, \eqref{bell_shaped_F} reduces to $\|\omega\|_{\L{\infty}} < R$, which only means that the 
maximum velocity of the slow vehicle is lesser than the maximum velocity of the cars.

\begin{theorem}
	\label{BVRS_uniqueness}
	Let $f \in \Lip([0, R], \R^)$ and assume that \eqref{bell_shaped}-\eqref{bell_shaped_F} hold. Fix $\rho_o^{1}, \rho_o^{2} \in \L{1} \cap \BV(\R, [0, R])$ 
    and $y_o^{1}, y_o^{2} \in \R$. We denote by $(\rho^{1}, y^{1})$ a $\BV$-regular 
    solution to Problem \eqref{AS2020} corresponding to initial data 
	$(\rho_o^{1}, y_o^{1})$, and by $(\rho^{2},y^{2})$ an admissible weak solution with initial data $(\rho_o^{2}, y_o^{2})$. Then there exist constants 
	$\alpha,\beta,\gamma > 0$ such that for all $t \in [0, T]$, 
	\begin{equation}
		\label{stab1}
		\|\rho^{1}(t)-\rho^{2}(t)\|_{\L{1}(\R)} 
		\leq \left( |y_o^{1}-y_o^{2}| \TV(\rho_o^{1}) + \|\rho_o^{1}-\rho_o^{2}\|_{\L{1}(\R)} \right) \exp(\alpha t)
	\end{equation}

	and
	\begin{equation}
		\label{stab2} 
		|y^1 (t) - y^2 (t)|
		\leq |y_o^1 - y_o^2 | + (\beta |y_o^1 - y_o^2| + \gamma \|\rho_o^1 - \rho_o^2 \|_{\L{1}(\R)}) (\exp(\alpha t)  - 1).
	\end{equation}

	In particular, Problem \eqref{AS2020} admits at most one $\BV$-regular solution.
\end{theorem}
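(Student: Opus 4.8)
The plan is to run a Kružkov doubling-of-variables argument comparing $\rho^1$ and $\rho^2$, upgraded to absorb both the mismatch of the two advection velocities $s^i := \dot y^i$ and the flux constraint at $x=0$, and then to close the resulting inequality by Gronwall's lemma, feeding the $\L{1}$ distance back through the nonlocal ODE. I would first record the velocity estimate: since $\omega$ is Lipschitz and $\mu \in \L{1} \cap \L{\infty}$, Lemma \ref{bus_BV} and its proof give, for a.e. $t$,
\[
	|s^1(t) - s^2(t)| \leq \|\omega'\|_{\L{\infty}} \|\mu\|_{\L{\infty}} \|\rho^1(t) - \rho^2(t)\|_{\L{1}(\R)},
\]
and consequently $|q^1(t) - q^2(t)| \leq \|Q'\|_{\L{\infty}}|s^1(t) - s^2(t)|$ for the constraint levels $q^i = Q(s^i)$. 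Next I would treat the initial data: because the Cauchy datum for $\rho^i$ is $\rho_o^i(\cdot + y_o^i)$, a shift estimate for $\BV$ functions together with the translation invariance of the $\L{1}$ norm yields
\[
	\|\rho_o^1(\cdot + y_o^1) - \rho_o^2(\cdot + y_o^2)\|_{\L{1}} \leq |y_o^1 - y_o^2| \, \TV(\rho_o^1) + \|\rho_o^1 - \rho_o^2\|_{\L{1}},
\]
which is exactly the bracketed quantity appearing in \eqref{stab1}.

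The core is the doubling argument itself. Using $\rho^2(y,s)$ as the Kružkov constant $k$ in the entropy inequalities \eqref{EI} for $\rho^1$ and symmetrically $\rho^1(x,t)$ in those for $\rho^2$, and exploiting that $\rho^1$ is $\BV$-regular -- so that it admits strong one-sided traces at $x=0$ and a uniformly bounded total variation $\TV(\rho^1(t)) \leq V$ -- I expect, after sending the mollification parameters to zero, an estimate of the form
\[
	\|\rho^1(t) - \rho^2(t)\|_{\L{1}} \leq \|\rho_o^1(\cdot + y_o^1) - \rho_o^2(\cdot + y_o^2)\|_{\L{1}} + \int_0^t \left( |s^1 - s^2| \, \TV(\rho^1(\tau)) + \mathcal{C}(\tau) \right) \d{\tau}.
\]
The velocity-mismatch term arises because $F(s^1,\rho) - F(s^2,\rho) = -(s^1 - s^2)\rho$ is Lipschitz in $\rho$ with constant $|s^1 - s^2|$, so integrating the flux discrepancy against $|\p_x \rho^1|$ produces the factor $\TV(\rho^1)$. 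The interface contribution $\mathcal{C}(\tau)$ collects the constraint defects $\cR_{s^i}(\cdot, q^i)$ evaluated at the traces of the two solutions at $x=0$.

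The hardest step will be the rigorous control of $\mathcal{C}(\tau)$: the two solutions carry different constraint levels $q^1 \neq q^2$ and different velocities $s^1 \neq s^2$, so one cannot directly invoke the $\L{1}$ contraction valid for a single fixed constraint. Here assumption \eqref{bell_shaped_F} is essential, since it guarantees that $F(s,\cdot)$ is genuinely bell-shaped for every admissible $s$, making the interface germ machinery of \cite{CG2007, ADR2014} applicable; the trace terms then dissipate when $s^1 = s^2$ and $q^1 = q^2$, and in general are dominated by $|q^1 - q^2| + |s^1 - s^2|$, i.e.\ by $(\|Q'\|_{\L{\infty}} + 1)|s^1 - s^2|$. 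Substituting the velocity estimate from the first paragraph, every integrand term is bounded by $\alpha \|\rho^1(\tau) - \rho^2(\tau)\|_{\L{1}}$ for a suitable $\alpha > 0$ built from $V$, $\|\omega'\|_{\L{\infty}}$, $\|\mu\|_{\L{\infty}}$ and $\|Q'\|_{\L{\infty}}$, and Gronwall's lemma delivers \eqref{stab1}.

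Finally, \eqref{stab2} follows from the weak ODE formulation \eqref{ODE}, which gives $y^1(t) - y^2(t) = (y_o^1 - y_o^2) + \int_0^t (s^1 - s^2)\,\d{\tau}$; bounding the integrand by the velocity estimate, inserting \eqref{stab1}, and integrating $\exp(\alpha\tau)$ over $[0,t]$ produces the factor $(\exp(\alpha t) - 1)$ with constants $\beta,\gamma$ proportional to $\|\omega'\|_{\L{\infty}}\|\mu\|_{\L{\infty}}/\alpha$. Uniqueness of the $\BV$-regular solution is then the special case $\rho_o^1 = \rho_o^2$, $y_o^1 = y_o^2$, for which both right-hand sides vanish.
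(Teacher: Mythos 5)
Your proposal is correct and takes essentially the same approach as the paper: the paper simply packages your doubling-of-variables/germ step as the ready-made appendix estimate \eqref{stab_appendix} (Theorem \ref{AWS_uniqueness_appendix}), whose right-hand side -- the terms $2\int_0^t |q^1-q^2|\d{\tau}$, $2\int_0^t \|F(s^1(\tau),\cdot)-F(s^2(\tau),\cdot)\|_{\L{\infty}}\d{\tau}$ and the $\TV(\rho^1(\tau))$-weighted flux-derivative term -- matches exactly the interface and velocity-mismatch contributions you identify, producing the same constant $2\|Q'\|_{\L{\infty}}+2R+C$ in \eqref{like_DMG}. All remaining ingredients (the shift estimate $\|\rho_o^1(\cdot+y_o^1)-\rho_o^2(\cdot+y_o^2)\|_{\L{1}} \leq |y_o^1-y_o^2|\TV(\rho_o^1)+\|\rho_o^1-\rho_o^2\|_{\L{1}}$, the nonlocal velocity bound $|\dot y^1-\dot y^2|\leq \|\omega'\|_{\L{\infty}}\|\mu\|_{\L{\infty}}\|\rho^1(t)-\rho^2(t)\|_{\L{1}}$, Gronwall, and the integration of the ODE giving \eqref{stab2}) coincide with the paper's argument.
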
 

\begin{proof}
	Since $(\rho^1, y^1)$ is a $\BV$-regular solution to Problem \eqref{AS2020}, there exists $C \geq 0$ such that
	\[
		\forall t \in [0,T], \ \TV(\rho^1 (t)) \leq C.
	\]

	Lemma \ref{bus_BV} ensures that $\dot y^1, \dot y^2 \in \BV([0,T], \R^+)$. We can use \eqref{stab_appendix} to obtain that for all $t \in [0, T]$, we have 
	\begin{equation}
        \label{like_DMG}
        \|\rho^1 (t) - \rho^2 (t)\|_{\L{1}(\R)} \leq |y_o^1 - y_o^2 | \TV(\rho_o^1) + \|\rho_o^1 - \rho_o^2 \|_{\L{1}}
        + \left( 2 \|Q'\|_{\L{\infty}} + 2R + C \right) \int_{0}^{t} |\dot y^1 (s) - \dot y^2 (s)| \d s.
	\end{equation}

	Moreover, since for a.e. $t \in \mathopen]0,T \mathclose[$,
	\[
		|\dot y^1 (t) - \dot y^2 (t)| \leq \|\omega'\|_{\L{\infty}}\|\mu\|_{\L{\infty}} \|\rho^1 (t) - \rho^2 (t)\|_{\L{1}(\R)},
	\]

	Gronwall's lemma yields \eqref{stab1} with 
	$\ds{\alpha = \left( 2 \|Q'\|_{\L{\infty}} + 2R + C \right) \|\omega'\|_{\L{\infty}}\|\mu\|_{\L{\infty}}}$. Then for all $t \in [0,T]$,
	\[
		\begin{aligned}
			|y^1 (t) - y^2 (t)| 
			& \leq |y_o^1 - y_o^2| + \int_{0}^{t} |\dot y^1 (s) - \dot y^2 (s)| \d s \\
			& \leq |y_o^1 - y_o^2| + \|\omega'\|_{\L{\infty}} \|\mu\|_{\L{\infty}} \int_{0}^{t} \|\rho^1 (s) - \rho^2 (s)\|_{\L{1}(\R)} \d s \\
			& \leq |y_o^1 - y_o^2| + (\beta |y_o^1 - y_o^2| + \gamma \|\rho_o^1 - \rho_o^2 \|_{\L{1}(\R)}) (\exp(\alpha t) - 1),
		\end{aligned}
	\]

	where
	\[
        \beta = \frac{\TV(\rho_o^1)}{ 2 \|Q'\|_{\L{\infty}} + 2R + C} 
        \quad \text{and} \quad 
        \gamma = \frac{1}{2 \|Q'\|_{\L{\infty}} + 2R + C}.
	\]

	The uniqueness of a $\BV$-regular solution is then clear.
\end{proof}

\begin{remark}
	Up to inequality \eqref{like_DMG}, our proof was very much following the one of \cite[Theorem 2.1]{DMG2017}. However, the authors of \cite{DMG2017} 
	faced an issue to derive a Lipschitz stability estimate between the car densities and the slow vehicle velocities starting from
	\[
		|\omega\left(\rho^1 (0+,t)\right) - \omega\left(\rho^2 (0+,t)\right)|.
	\]

	For us, due to the non-locality of our problem, it was straightforward to obtain the bound
	\[
		\left|\omega \left(\int_{\R} \rho^1 (x,t) \mu(x) \d x \right) - \omega \left(\int_{\R} \rho^2 (x,t) \mu(x) \d x \right) \right| 
		\leq \|\omega'\|_{\L{\infty}}\|\mu\|_{\L{\infty}} \|\rho^1 (t) - \rho^2 (t)\|_{\L{1}(\R)}.
	\]
\end{remark}

\begin{remark}
	\label{weak_strong}
	A noteworthy consequence of Theorem \ref{BVRS_uniqueness} is that existence of a $\BV$-regular solution will ensure uniqueness of an admissible weak one.
\end{remark}

\section{Two existence results}
\label{Section2}

\subsection{Time-splitting technique}

In \cite{DMG2014}, to prove existence for their problem, the authors took a wave-front tracking approach. We choose here to use a time-splitting technique. 
The main advantage of this technique is that it relies on a ready-to-use theory. More precisely, at each time step, we will deal with exact solutions to 
a conservation law with a flux constraint, which have now become standard, see \cite{CG2007, AGS2010, CGS2013}.
\bigskip 

Fix $\rho_o \in \L{1}(\R, [0,R])$ and $y_o \in \R$. Let $\delta > 0 $ be a time step, $N \in \N$ such that $T \in [N \delta, (N+1)\delta \mathclose[$ and denote for 
all $n \in [\![0; N+1]\!]$, $t^{n} = n \delta$. We initialize with
\[
	\forall t \in \R, \; \rho^{0}(t) = \rho_o(\cdot +y_o) \quad \text{and} \quad \forall t \in [0,T], \ y^{0}(t) = y_o.
\]

Fix $n \in [\![1; N+1]\!]$. First, we define for all $t \in \mathopen]t^{n-1},t^{n}]$,
\[
	\sigma^{n}(t) = \omega \left( \int_{\R} \rho^{n-1}(x,t - \delta) \mu(x) \d x \right), \ s^{n} = \sigma^{n}(t^{n}) \ \text{and} \ q^{n} = Q(s^{n}).
\]

Since both $q^n$ and $\rho^{n-1}(\cdot, t^{n-1})$ are bounded, \cite[Theorem 2.11]{AGS2010} ensures the existence and uniqueness of a solution 
$\rho^n \in \L{\infty}(\R \times [t^{n-1},t^{n}], \R)$ to
\[
	\left\{
		\begin{array}{lcr}
			\p_{t}\rho + \p_{x} \left(F(s^{n},\rho) \right) = 0 & & \R \times \mathopen]t^{n-1}, t^n\mathclose[ \\[5pt]
			\rho(x,t^{n-1}) = \rho^{n-1}(x,t^{n-1}) & & x \in \R \\[5pt]
			\left. F(s^{n},\rho) \right|_{x=0} \leq q^{n} & & t \in \mathopen]t^{n-1}, t^n\mathclose[,
		\end{array}
	\right.
\]

in the sense that $\rho^n$ satisfies entropy/constraint inequalities analogous to 
\eqref{EI}-\eqref{CI} with suitable flux, constraint and initial 
datum, see Definition \ref{AWS_appendix}. Taking also into account Remark \ref{Rk2}, $\rho^{n} \in \Czero([t^{n-1},t^{n}], \Lloc{1}(\R, \R))$. We then 
define the following functions:
\[
	\begin{aligned}
		& \bullet \ \rho_{\delta}(t) = \rho^{0} \1_{\R^{-}}(t) + \sum_{n=1}^{N+1} \rho^{n}(t) \1_{\mathopen]t^{n-1},t^{n}]}(t) \\
		& \bullet \ \sigma_{\delta}(t), q_{\delta}(t), s_{\delta}(t) = \sigma^{n}(t), q^{n}, s^{n} \; \text{if} \ t \in \mathopen]t^{n-1},t^{n}] \\
		& \bullet \ y_{\delta}(t) = y_o + \int_{0}^{t} \sigma_{\delta}(u) \d u.
	\end{aligned}
\]

First, let us prove that $(\rho_{\delta},y_{\delta})$ solves an approximate version of Problem \eqref{AS2020}.

\begin{proposition}
	\label{splitting}
	The couple $(\rho_{\delta},y_{\delta})$ is an admissible weak solution to
	\begin{equation}
		\label{AS2020_splitting}
		\left\{
			\begin{array}{lcr}
				\p_{t}\rho_{\delta} + \p_{x} \left( F(s_{\delta}(t),\rho_{\delta}) \right) = 0 & & \R \times \mathopen]0,T \mathclose[ \\[5pt]
				\rho_{\delta}(x,0) = \rho_o(x+y_o) & & x \in \R \\[5pt]
				\left. F(s_{\delta}(t),\rho_{\delta}) \right|_{x=0} \leq q_{\delta}(t) & & t \in \mathopen]0,T \mathclose[ \\[5pt]
				\ds{\dot y_{\delta}(t) = \omega \left( \int_{\R} \rho_{\delta}(x,t-\delta) \mu(x) \d x \right)} & & t \in \mathopen]0,T \mathclose[ \\[5pt]
				y_{\delta}(0) = y_o, & & 
			\end{array}
		\right.
	\end{equation}

	in the sense that $\rho_\delta \in \Czero([0,T], \Lloc{1}(\R, \R))$ and satisfies entropy/constraint inequalities analogous to 
	\eqref{EI}-\eqref{CI} with flux $F(s_{\delta}(\cdot), \cdot)$, constraint $q_\delta$, and initial data $\rho_o(\cdot + y_o)$; and $y_{\delta}$ satisfies, 
	instead of \eqref{ODE}, the following weak ODE formulation:
	\[
		\forall t \in [0,T], \quad y_{\delta}(t) = y_o + \int_0^t \omega \left( \int_\R \rho_\delta(x,s-\delta) \mu(x) \d x \right) \d s.	
	\]
\end{proposition}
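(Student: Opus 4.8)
The plan is to verify, one at a time, the four requirements making up the splitting analogue of Definition \ref{AWS}, reducing each to the corresponding per-step property of the building blocks $\rho^n$, which are by construction exact constrained solutions on $[t^{n-1},t^n]$ in the sense of Definition \ref{AWS_appendix}. I would start with the time-continuity \eqref{continuous_in_l1}. Each $\rho^n$ already lies in $\Czero([t^{n-1},t^n], \Lloc{1}(\R,\R))$ (by \cite[Theorem 2.11]{AGS2010} together with Remark \ref{Rk2}), so $\rho_\delta$ is continuous on each open slab $\mathopen]t^{n-1},t^n\mathclose[$. The point is the matching at the nodes: since the Cauchy datum of the $n$-th problem is $\rho^{n-1}(\cdot,t^{n-1})$, one has $\rho^n(\cdot,t^{n-1}) = \rho^{n-1}(\cdot,t^{n-1})$, which forces the left and right $\Lloc{1}$-limits of $\rho_\delta$ at each $t^n$ to coincide. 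Patching the continuous pieces then yields $\rho_\delta \in \Czero([0,T],\Lloc{1}(\R,\R))$.

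The heart of the argument is the entropy and constraint inequalities, which I would obtain by a telescoping (gluing) procedure. On each slab $\mathopen]t^{n-1},t^n\mathclose[$ the flux and constraint data of \eqref{AS2020_splitting} reduce exactly to the constants $s^n = s_\delta(t)$ and $q^n = q_\delta(t)$, so $\rho^n$ satisfies the per-step analogue of \eqref{EI} (resp.\ \eqref{CI}) for every pair of times inside $[t^{n-1},t^n]$. Given arbitrary $0 \le \tau < \tau' \le T$, I would insert the grid points $t^m < \dots < t^p$ lying in $\mathopen]\tau,\tau'\mathclose[$, apply the per-step inequality on each resulting subinterval, and sum. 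At each interior node $t^j$ the terminal boundary term $-\int_\R |\rho_\delta(x,t^j)-k|\varphi(x,t^j)\,\d{x}$ coming from the left slab cancels the initial boundary term coming from the right slab, precisely because $\rho_\delta$ is $\Lloc{1}$-continuous at $t^j$. What survives is exactly \eqref{EI} (resp.\ \eqref{CI}) with the piecewise-constant flux $F(s_\delta(\cdot),\cdot)$ and constraint $q_\delta$ on $[\tau,\tau']$, since the integrands $\Phi_{s_\delta(t)}(\rho_\delta,k)$ and $\cR_{s_\delta(t)}(k,q_\delta(t))$ agree slab by slab with the per-step data.

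For the weak ODE formulation I would simply unfold the definitions. By construction $y_\delta(t) = y_o + \int_0^t \sigma_\delta(u)\,\d{u}$, and for $u \in \mathopen]t^{n-1},t^n\mathclose[$ one has $\sigma_\delta(u) = \sigma^n(u) = \omega\left(\int_\R \rho^{n-1}(x,u-\delta)\mu(x)\,\d{x}\right)$. The observation that closes the computation is that the delay shifts $u$ into the previous slab, $u-\delta \in \mathopen]t^{n-2},t^{n-1}\mathclose[$, on which $\rho_\delta$ coincides with $\rho^{n-1}$ (and for $n=1$ one has $u-\delta \le 0$, where $\rho_\delta \equiv \rho_o(\cdot+y_o)$); hence $\rho^{n-1}(\cdot,u-\delta) = \rho_\delta(\cdot,u-\delta)$ almost everywhere, and the integrand defining $y_\delta$ equals $\omega\left(\int_\R \rho_\delta(x,u-\delta)\mu(x)\,\d{x}\right)$ for a.e.\ $u$, which is the asserted formulation.

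The only delicate point is the cancellation of interior trace terms in the second step: it is routine in spirit but genuinely relies on the $\Lloc{1}$-continuity at the nodes having been established first, so that the terminal and initial traces at each $t^j$ match and no uncontrolled jump is left behind. Everything else is bookkeeping resting on the well-posedness of the per-step constrained problems.
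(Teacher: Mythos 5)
Your proposal is correct and takes essentially the same approach as the paper: time-continuity by gluing the per-step solutions through the stop-and-restart conditions $\rho^{n}(\cdot,t^{n-1})=\rho^{n-1}(\cdot,t^{n-1})$, the entropy/constraint inequalities by telescoping the per-slab inequalities with cancellation of the matching boundary traces at interior nodes, and the weak ODE formulation by unfolding the integral and observing that the delay $u-\delta$ lands in the previous slab where $\rho_\delta$ coincides with $\rho^{n-1}$. The only cosmetic difference is that you treat arbitrary $0\leq\tau<\tau'\leq T$ directly by inserting the grid points of $\mathopen]\tau,\tau'\mathclose[$, whereas the paper first telescopes between grid times $t^n,t^m$ and notes the extension to general $\tau,\tau'$ is straightforward.
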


\begin{proof}
	By construction, for all $n \in [\![1; N+1]\!]$, 
    $\rho^{n} \in \Czero([t^{n-1},t^{n}], \Lloc{1}(\R, \R))$. Combining this with the 
    “stop-and-restart” conditions $\rho^{n}(\cdot,t^{n-1}) = \rho^{n-1}(\cdot,t^{n-1})$, one ensures that $\rho_{\delta} \in \Czero([0, T], \Lloc{1}(\R, \R))$. Let 
    $t \in [0,T]$ and $n \in [\![1; N+1]\!]$ such that 
    $t \in [t^{n-1},t^{n}\mathclose[$. Then,
	\begin{equation}
		\label{ODE_splitting}
		\begin{split}
			y_{\delta}(t) - y_o
			= & \sum_{k=1}^{n-1} \int_{t^{k-1}}^{t^{k}} \sigma^{k}(s) \d s + \int_{t^{n-1}}^{t} \sigma^{n}(s) \d s \\
			= & \sum_{k=1}^{n-1} \int_{t^{k-1}}^{t^{k}} 
            \omega \left( \int_{\R} \underbrace{\rho^{k-1}(x,s - \delta)}_{\rho_{\delta}(x,s-\delta)} \mu(x) \d x \right) \d s
             + \int_{t^{n-1}}^{t} \omega \left( \int_{\R} \underbrace{\rho^{n-1}(x,s - \delta)}_{\rho_{\delta}(x,s-\delta)} \mu(x) \d x \right) \d s \\
			= & \int_{0}^{t} \omega \left(\int_{\R} \rho_{\delta}(x,s-\delta) \mu(x) \d x \right) \d s,
		\end{split}
	\end{equation}

	which proves that $\dot y_\delta$ solves the ODE in \eqref{AS2020_splitting} in the weak sense. Fix now 
	$\varphi \in \Cc{\infty}(\R \times \R^{+}, \R^+)$ and $k \in [0,R]$. By construction of the sequence $((\rho^{k},y^{k}))_{k}$, we have for 
	all $n,m \in [\![0; N+1]\!]$,
	\[
		\begin{split}
			& \int_{t^n}^{t^m} \int_{\R} |\rho_{\delta}-k| \p_{t}\varphi + \Phi_{s_{\delta}(t)}(\rho_{\delta},k) \p_{x} \varphi \d x\d t \\
			= & \sum_{k=n+1}^{m} \int_{t^{k-1}}^{t^{k}} \int_{\R} |\rho^{k}-k| \p_{t}\varphi + \Phi_{s^{k}}(\rho^{k},k) \p_{x} \varphi \d x\d t \\
			\geq & \sum_{k=n+1}^{m} \left\{ \int_{\R}|\rho^{k}(x,t^{k})-k|\varphi(x,t^{k}) \d x 
            - \int_{\R}|\underbrace{\rho^{k}(x,t^{k-1})}_{\rho^{k-1}(x,t^{k-1})}-k|\varphi(x,t^{k-1}) \d x 
            - 2 \int_{t^{k-1}}^{t^{k}} \cR_{s^{k}}(k,q^{k}) \varphi(0,t) \d t \right\} \\
            = & \int_{\R}|\rho_{\delta}(x,t^m)-k|\varphi(x,t^m) \d x - \int_{\R}|\rho_{\delta}(x,t^n)-k|\varphi(x,t^n) \d x 
            - 2 \int_{t^{n}}^{t^{m}} \cR_{s_{\delta}(t)}(k,q_{\delta}(t)) \varphi(0,t) \d t.
		\end{split}
	\]

	It is then straightforward to prove that for all $0 \leq \tau < \tau' \leq T$,
	\begin{equation}
		\label{EI_splitting}
		\begin{aligned}
			& \int_{\tau}^{\tau'} \int_{\R} |\rho_{\delta}-k| \p_{t}\varphi + \Phi_{s_{\delta}(t)}(\rho_{\delta},k) \p_{x} \varphi \ \d x\d t 
			+ \int_{\R}|\rho_{\delta}(x,\tau)-k|\varphi(x,\tau) \d x \\
			& - \int_{\R}|\rho_{\delta}(x,\tau')-k|\varphi(x,\tau') \d x  
			+ 2 \int_{\tau}^{\tau'} \cR_{s_{\delta}(t)}(k,q_{\delta}(t)) \varphi(0,t) \d t \geq 0.
		\end{aligned}
	\end{equation}

	Proving that $\rho_\delta$ satisfies constraint inequalities is very similar so we omit the details. One has to start from
	\[
		- \int_{\tau}^{\tau'} \int_{\R^{+}} \rho_{\delta} \p_{t} (\varphi \psi) + F(s_{\delta}(t),\rho_{\delta}) \p_{x} (\varphi \psi) \ \d x\d t
	\]

	and make use once again of the construction of the sequence $((\rho^{k},y^{k}))_{k}$ to obtain
	\begin{equation}
		\label{CI_splitting}
		\begin{aligned}
			& - \int_{\tau}^{\tau'} \int_{\R^{+}} \rho_{\delta} \p_{t} (\varphi \psi) + F(s_{\delta}(t),\rho_{\delta}) \p_{x} (\varphi \psi) \ \d x\d t 
			- \int_{\R^{+}} \rho_{\delta}(x,\tau) \varphi(x) \psi(\tau) \d x \\
			& + \int_{\R^{+}} \rho_{\delta}(x,\tau') \varphi(x) \psi(\tau') \d x  \leq  \int_{\tau}^{\tau'} q_{\delta}(t) \psi(t) \d t.
		\end{aligned}
	\end{equation}

	This concludes the proof.
\end{proof}

\begin{remark}
	\label{compactness_bus_splitting}
	Remark that we have for all $\delta > 0$,
	\[
        \|\sigma_{\delta}\|_{\L{\infty}} \leq \|\omega\|_{\L{\infty}} 
        \quad \text{and} \quad 
        \|y_{\delta}\|_{\L{\infty}} \leq |y_o| + T \|\omega\|_{\L{\infty}}.
	\]

	This means that the sequence $(y_{\delta})_{\delta}$ is bounded in $\W{1}{\infty}
    (\mathopen]0,T \mathclose[, \R)$. Then the compact embedding of $\W{1}{\infty}
    (\mathopen]0,T \mathclose[, \R)$ in $\Czero([0,T], \R)$ yields a subsequence of 
    $(y_{\delta})_{\delta}$, which we do not relabel, which converges uniformly on 
    $[0,T]$ to some $y \in \Czero([0,T], \R)$.
\end{remark}

At this point, we propose two ways to obtain compactness for the sequence $(\rho_{\delta})_{\delta}$, which will lead to two existence results.

\subsection{The case of a nondegenerately nonlinear flux}\label{non_deg_flux}

\begin{theorem}
	\label{AWS_existence_splitting}
	Fix $\rho_o \in \L{1}(\R, [0;R])$ and $y_o \in \R$. Let $f \in \Lip([0, R], \R^)$ 
    and assume that \eqref{bell_shaped}-\eqref{bell_shaped_F} hold, as well as the 
    following nondegeneracy assumption
	\begin{equation}
		\label{non_deg}
		\text{for a.e.} \; s \in \mathopen]0,\|\omega\|_{\L{\infty}} \mathclose[, 
        \quad \emph{meas} \biggl(\{ \rho \in [0,R] \; : \; f'(\rho) - s = 0 \} \biggr) = 0.
	\end{equation}

	Then Problem \eqref{AS2020} admits at least one admissible weak solution.
\end{theorem}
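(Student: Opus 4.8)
The plan is to obtain an admissible weak solution as the $\delta \to 0$ limit of the time-split approximations $(\rho_\delta, y_\delta)$ furnished by Proposition \ref{splitting}. By Remark \ref{compactness_bus_splitting} a subsequence of $(y_\delta)_\delta$ already converges uniformly on $[0,T]$ to some $y \in \W{1}{\infty}(\mathopen]0,T\mathclose[, \R)$, so the heart of the matter is to produce \emph{strong} compactness for $(\rho_\delta)_\delta$, and this is precisely where the nondegeneracy hypothesis \eqref{non_deg} enters.

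For the compactness, I would argue slab by slab. On each interval $\mathopen]t^{n-1}, t^n\mathclose[$ the function $\rho_\delta$ is an entropy solution of an \emph{autonomous} conservation law with flux $\rho \mapsto F(s^n, \rho) = f(\rho) - s^n \rho$, whose derivative $f'(\rho) - s^n$ vanishes, by \eqref{non_deg}, only on a $\rho$-null set for a.e. value of the constant speed $s^n \in [0, \|\omega\|_{\L{\infty}}]$. Away from the constraint interface $\{x=0\}$ the densities $\rho_\delta$ therefore satisfy, on $\R^{*} \times \mathopen]0,T\mathclose[$, entropy inequalities for a family of nondegenerate fluxes, and they are uniformly bounded ($\rho_\delta \in [0,R]$). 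I would then invoke a compactness theorem for entropy solutions of scalar conservation laws with nondegenerate flux -- of Panov / velocity-averaging type -- to extract a further subsequence with $\rho_\delta \to \rho$ in $\Lloc{1}(\R^{*} \times \mathopen]0,T\mathclose[, \R)$ and a.e. Since $\{x=0\}$ is Lebesgue-negligible, this yields $\rho_\delta \to \rho$ a.e. on $\R \times \mathopen]0,T\mathclose[$, with $\rho$ valued in $[0,R]$.

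With a.e. convergence in hand, the remaining steps are stability passages in the weak formulations, all of which are designed to be stable under a.e. convergence. First I would upgrade the convergence of the speeds: from the strong $\Lloc{1}$ convergence of $\rho_\delta$, the $\L{1}\cap\L{\infty}$ regularity of $\mu$, and the Lipschitz continuity of $\omega$ and $Q$, one checks that the nonlocal averages $\int_\R \rho_\delta(x, \cdot - \delta)\mu(x)\d x$ converge (the time shift $\delta \to 0$ being absorbed by the uniform-in-$\delta$ time-continuity estimate already exploited in Lemma \ref{bus_BV}), whence $s_\delta \to \dot y$, $\sigma_\delta \to \dot y$ and $q_\delta \to q := Q(\dot y)$ in a sufficiently strong sense. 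Consequently $\Phi_{s_\delta(t)}(\rho_\delta, k) \to \Phi_{\dot y(t)}(\rho, k)$, $F(s_\delta(t), \rho_\delta) \to F(\dot y(t), \rho)$ and $\cR_{s_\delta(t)}(k, q_\delta(t)) \to \cR_{\dot y(t)}(k, q(t))$ pointwise and dominatedly. Passing to the limit in \eqref{EI_splitting} then gives \eqref{EI}, in \eqref{CI_splitting} gives \eqref{CI}, and in the weak ODE of Proposition \ref{splitting} gives \eqref{ODE}; finally the time-continuity \eqref{continuous_in_l1} is recovered from the limit inequalities \eqref{EI} via Remark \ref{Rk2}.

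I expect the compactness step to be the main obstacle. The delicate points are: (i) justifying applicability of the nondegenerate-flux compactness despite the explicit, merely piecewise-constant $t$-dependence of the flux coefficient $s_\delta(t)$ -- handled by working slab by slab, where the flux is autonomous and $s^n$ avoids the null set of \eqref{non_deg}; and (ii) the presence of the flux constraint at $x=0$, which contributes a bounded defect concentrated on the negligible line $\{x=0\}$ and is therefore harmless for a.e. convergence, exactly as in the use of $\Omega = \R^{*}$ in Remark \ref{Rk2}. The subsequent limit passages are routine once the speeds are shown to converge, the nonlocal structure of \eqref{B} again rendering the velocity terms stable -- in sharp contrast with the local trace $\rho(y(t)^{+}, t)$ of \eqref{A}.
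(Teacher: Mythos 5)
Your global architecture coincides with the paper's: time-splitting approximants from Proposition \ref{splitting}, uniform convergence of $(y_\delta)_\delta$ as in Remark \ref{compactness_bus_splitting}, strong compactness of $(\rho_\delta)_\delta$ from nondegeneracy, convergence of the speeds via mollified weights (your observation that the time shift $\delta$ is absorbed by the uniform $\W{1}{\infty}$ bound on $t \mapsto \omega\left(\int_\R \rho_\delta \mu_n \d{x}\right)$ is exactly the device of Lemma \ref{bus_BV} reused in the paper's proof), and limit passage with time-continuity from Remark \ref{Rk2}. The genuine gap is in the execution of the compactness step. You propose to invoke the nondegenerate-flux compactness ``slab by slab'', on each $\mathopen]t^{n-1},t^n\mathclose[$ where the flux is autonomous. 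But the slabs are attached to a single value of $\delta$ and shrink as $\delta \to 0$: there is no fixed space-time region on which all members of the sequence $(\rho_\delta)_\delta$ solve one autonomous conservation law, so slab-wise extractions cannot be patched or diagonalized into a subsequence converging on $\R^{*} \times \mathopen]0,T\mathclose[$. Moreover, your parenthetical claim that ``$s^n$ avoids the null set of \eqref{non_deg}'' is unjustified: \eqref{non_deg} is an almost-everywhere statement in $s$, whereas the $s^n$ are specific values of $\omega$; nothing prevents some $s^n$ from landing exactly on a degenerate speed (for example the slope of a linear piece of $f$), in which case the autonomous flux on that slab is degenerate and the slab-wise compactness fails. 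The paper avoids both problems by applying Panov's strong precompactness results \cite{Panov2009, Panov2010} once, to the whole sequence on $\R^{*} \times \mathopen]0,T\mathclose[$, viewed as entropy solutions of equations with flux $f(\rho) - s_\delta(t)\rho$ depending merely measurably (piecewise constantly, uniformly boundedly) on $t$; it is precisely this discontinuous-flux framework for which hypothesis \eqref{non_deg} is tailored, with no need to inspect the values $s^n$ individually.

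A secondary omission: a.e. convergence in space-time delivers the fixed-time terms $\int_{\R}|\rho_\delta(x,\tau)-k|\varphi(x,\tau)\d{x}$ only for a.e. $\tau$, so passing to the limit in \eqref{EI_splitting}--\eqref{CI_splitting} first yields \eqref{EI}--\eqref{CI} for almost every $0 \leq \tau < \tau' \leq T$ only. The paper closes this by first establishing \eqref{continuous_in_l1} via Remark \ref{Rk2} and then using continuity of the left-hand sides in $(\tau,\tau')$ to upgrade from a.e.\ to all pairs; your write-up should include this step explicitly.
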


\begin{proof}
	Condition \eqref{non_deg} combined with the obvious uniform $\L{\infty}$ bound
	\[
		\forall \delta > 0, \ \forall (x,t) \in \R \times [0,T], \ \rho_{\delta}(x,t) \in [0,R],
	\]

	and the results proved by Panov in \cite{Panov2009, Panov2010} ensure the existence of a subsequence -- which we do not relabel -- that converges in 
	$\Lloc{1}(\R^{*} \times \mathopen]0,T \mathclose[, \R)$ to some 
    $\rho \in \Lloc{1}(\R^{*} \times \mathopen]0,T \mathclose[, \R)$; and a further extraction yields the almost everywhere 
	convergence on $\R \times \mathopen]0,T \mathclose[$ and also the fact that $\rho \in \L{\infty}(\R \times \mathopen]0,T \mathclose[, [0,R])$. We now show that the couple $(\rho,y)$ constructed 
	above is an admissible weak solution to \eqref{AS2020} in the sense of Definition \ref{AWS}.
    \bigskip

	For all $\delta > 0$ and $t \in [0,T]$,
	\[
		\begin{split}
			y_{\delta}(t) - y_o
			= & \int_{0}^{t} \omega \left( \int_{\R} \rho_{\delta}(x,s - \delta) \mu(x) \d x \right) \d s \\
			= & \int_{-\delta}^{t-\delta} \omega \left( \int_{\R} \rho_{\delta}(x,s) \mu(x) \d x \right) \d s \\
			= & \int_{0}^{t} \omega \left( \int_{\R} \rho_{\delta}(x,s) \mu(x) \d x \right) \d s
			+ \left( \int_{-\delta}^{0} - \int_{t-\delta}^{t} \right) \omega \left( \int_{\R} \rho_{\delta}(x,s) \mu(x) \d x \right) \d s.
        \end{split}
	\]

	The last term vanishes as $\delta \to 0$ since $\omega$ is bounded. Then, Lebesgue 
    Theorem combined with the continuity of $\omega$ gives, 
	for all $t \in [0,T]$,
	\[
		y_{\delta}(t) \limit{\delta}{0} y_o + \int_{0}^{t} \omega \left( \int_{\R} \rho(x,s) \mu(x) \d x \right) \d s.
	\]

	This last quantity is also equal to $y(t)$ due to the uniform convergence of $(y_{\delta})_{\delta}$ to $y$. This proves that $y$ verifies \eqref{ODE}. Now, we 
	aim at passing to the limit in \eqref{EI_splitting} and \eqref{CI_splitting}. With this in mind, we prove the a.e. convergence of the sequence 
	$(\sigma_{\delta})_{\delta}$ towards $\dot y$. Since $\mu \in \BV(\R, \R)$, there exists a sequence of smooth functions 
	$(\mu_{n})_{n \in \N} \subset \BV \cap \Cc{\infty}(\R, \R)$ such that:
	\[
		\|\mu_n - \mu\|_{\L{1}} \limit{n}{+\infty} 0 \quad 
        \text{and} \quad \TV(\mu_{n}) \limit{n}{+\infty} \TV(\mu).
	\]

	Introduce for every $\delta > 0$ and $n \in \N$, the function
	\[
		\xi_{\delta}^{n}(t) := \int_{\R} \rho_{\delta}(x,t) \mu_{n}(x) \d x.
	\]

	Since for all $\delta > 0$, $\rho_{\delta}$ is a distributional solution to the conservation law in \eqref{AS2020_splitting}, one can show -- following the 
	proof of Lemma \ref{bus_BV} for instance -- that for every $n \in \N$, $\xi_{\delta}^{n} \in \W{1}{\infty}(\mathopen]0,T \mathclose[, \R)$, and that for a.e. $t \in \mathopen]0,T \mathclose[$,
	\[
		\dot \xi_{\delta}^{n}(t) 
        = \int_{\R} F(s_{\delta}(t),\rho_{\delta}) \mu_{n}'(x) \d x.
	\]

	Moreover, since both the sequences $\left( \|\mu_{n}\|_{\L{1}} \right)_{n}$ and $(\TV(\mu_{n}))_{n}$ are bounded, it is clear that 
	$\left(\xi_{\delta}^{n}\right)_{\delta,n}$ is uniformly bounded in $\W{1}{\infty}(\mathopen]0,T \mathclose[, \R)$, therefore so is 
    $\left(\omega (\xi_{\delta}^{n})\right)_{\delta,n}$. Consequently, for all $n \in \N$, $\delta > 0$ and almost every $t \in \mathopen]0,T \mathclose[$, the 
    triangle inequality yields:
	\[
		\begin{aligned}
			\left| \sigma_{\delta}(t) - \omega\left( \int_{\R} \rho(x,t) \mu(x) \d x \right) \right|
			& \leq 2 \|\omega'\|_{\L{\infty}} R \|\mu_{n}-\mu\|_{\L{1}} + \delta \ \sup_{n \in \N} \ \| \omega (\xi_{\delta}^{n})\|_{\W{1}{\infty}(\mathopen]0,T \mathclose[)} \\
            & + \|\omega'\|_{\L{\infty}} \left| \int_{\R} (\rho_{\delta}(x,t) - \rho(x,t)) \mu(x) \d x \right| 
            \underset{\underset{n \to +\infty}{\delta \to 0}}{\longrightarrow} 0,
		\end{aligned}
	\]

    which proves that $(\sigma_{\delta})_{\delta}$ converges a.e. on $\mathopen]0,T \mathclose[$ to $\dot y$. 
    
    To prove the time-continuity regularity, we first apply inequality 
    \eqref{EI_splitting} with $\tau = 0$, $\tau' = T$ (which is licit since $\rho_\delta$ is continuous in time), 
    $\varphi \in \Cc{\infty}(\R^{*} \times [0, T\mathclose[, \R^+)$ and $k \in [0,R]$:
	\[
		\int_{0}^{T} \int_{\R} |\rho_{\delta}-k| \p_{t}\varphi + \Phi_{\sigma_{\delta}(t)}(\rho_{\delta},k) \p_{x} \varphi \; \d x\d t
		+ \int_{\R}|\rho_o(x+y_o)-k|\varphi(x,0) \d x \geq 0.
	\]

	Then, we let $\delta \to 0$ to get
	\[
		\int_{0}^{T} \int_{\R} |\rho-k| \p_{t}\varphi + \Phi_{\dot y(t)}(\rho,k) \p_{x} \varphi \; \d x\d t 
		+ \int_{\R}|\rho_o(x+y_o)-k|\varphi(x,0) \d x \geq 0.
	\]

	Consequently, $\rho \in \Czero([0,T], \Lloc{1}(\R, \R))$, see Remark \ref{Rk2}.

    Finally, the a.e. convergences of $(\sigma_{\delta})_{\delta}$ and 
	$(\rho_{\delta})_\delta$ to $\dot y$ and $\rho$, respectively, are enough to pass 
    to the limit in \eqref{EI_splitting}. This ensures that for all test 
	functions $\varphi \in \Cc{\infty}(\R \times \R^{+}, \R^+)$ and $k \in [0,R]$, 
    the following inequalities hold for a.e. $0 \leq \tau < \tau' \leq T$:
	\[
		\begin{aligned}
			& \int_{\tau}^{\tau'} \int_{\R} |\rho-k| \p_{t}\varphi + \Phi_{\dot y(t)}(\rho,k) \p_{x} \varphi \ \d x\d t 
			+ \int_{\R}|\rho(x,\tau)-k|\varphi(x,\tau) \d x \\
			& - \int_{\R}|\rho(x,\tau')-k|\varphi(x,\tau') \d x  + 2 \int_{\tau}^{\tau'} \cR_{\dot y(t)}(k,q(t)) \varphi(0,t) \d t \geq 0.
		\end{aligned}
	\]

	Observe that the expression in the left-hand side of the previous inequality is a continuous function of $(\tau,\tau')$ which is almost everywhere greater than 
	the continuous function $0$. By continuity, this expression is everywhere greater than $0$, which proves that $\rho$ satisfies the entropy inequalities 
	\eqref{EI}. Using similar arguments, we show that $\rho$ satisfies the constraint inequalities \eqref{CI}. This proves the couple $(\rho,y)$ is an 
	admissible weak solution to Problem \eqref{AS2020}, and this concludes the proof.
\end{proof}

In this section, we proved an existence result for $\L{\infty}$ initial data, but we have no guarantee of uniqueness since \textit{a priori} we have no 
information regarding the $\BV$ regularity of such solutions. \\
Assumption \eqref{non_deg} ensures the compactness for sequences of entropy solutions to conservation laws with flux function $F$. However, it prevents us from
using flux functions with linear parts -- which corresponds to constant traffic velocity for small densities -- whereas such fundamental diagrams are often
used in traffic modeling. The results of the next section will extend to this interesting case, under the extra $\BV$ assumption on the data.

\subsection{Well-posedness for BV data}

To obtain compactness for $(\rho_\delta)_\delta$, an alternative to the setting of Section \ref{non_deg_flux} is to derive uniform $\BV$ bounds.

\begin{theorem}
	\label{BVRS_WP_splitting}
	Fix $\rho_o \in \L{1} \cap \BV(\R, [0,R])$ and $y_o \in \R$. Let 
    $f \in \Lip([0, R], \R^)$ and assume that \eqref{bell_shaped}-\eqref{bell_shaped_F} 
    hold. Suppose also that 
	\begin{equation}
		\label{reg_flux}
		\forall s \in \left[0, \|\omega\|_{\L{\infty}} \right], \quad 
        \rho \mapsto F(s, \rho) \in \Ck{1}([0,R] \backslash \{ \overline{\rho}_s\}, \R),
	\end{equation}

	where $\overline{\rho}_s = \underset{\rho \in [0,R]}{\emph{argmax}} \; F(s,\rho)$. 
    Finally, assume that $Q$ satisfies the condition
	\begin{equation}
		\label{level_constraint_splitting}
		\exists \eps > 0, \; \forall s \in [0,\|\omega\|_{\L{\infty}}], \quad 
        Q(s) \leq \underset{\rho \in [0,R]}{\emph{max}} \ F(s,\rho) - \eps.
	\end{equation}

    Then Problem \eqref{AS2020} admits a unique admissible weak solution, which is also 
    $\BV$-regular.
\end{theorem}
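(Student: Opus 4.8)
The plan is to run exactly the same time-splitting scheme as in the proof of Theorem \ref{AWS_existence_splitting}, but to replace the compactness argument based on the nondegeneracy assumption \eqref{non_deg} and Panov's results by one based on uniform $\BV$ bounds. Once the a.e.\ convergence of a subsequence of $(\rho_\delta)_\delta$ is secured, the passage to the limit in the weak ODE formulation, in the entropy inequalities \eqref{EI_splitting} and in the constraint inequalities \eqref{CI_splitting} is \emph{verbatim} the one already carried out in Theorem \ref{AWS_existence_splitting}. The only genuinely new ingredient is therefore the $\BV$ estimate, which has the bonus of upgrading the produced solution to a $\BV$-regular one.

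The core step I would establish first is a uniform-in-$\delta$ bound of the form $\sup_{t \in [0,T]} \TV(\rho_\delta(\cdot,t)) \leq C$. On each slab $\R \times \mathopen]t^{n-1},t^n\mathclose[$ the function $\rho^n$ is an entropy solution of a \emph{fixed} limited-flux problem, with flux $F(s^n,\cdot)$ and constraint level $q^n = Q(s^n)$; assumptions \eqref{reg_flux} and \eqref{bell_shaped_F} guarantee that $F(s^n,\cdot)$ falls into the bell-shaped class \eqref{bell_shaped_appendix} treated in the appendix, so that the appendix $\BV$-regularity result applies. The decisive structural hypothesis here is the gap condition \eqref{level_constraint_splitting}: since $q^n \leq \max_\rho F(s^n,\rho) - \eps$ uniformly in $n$, the constraint never saturates the maximal flux, which is precisely what prevents the total variation from being amplified at the interface $x=0$ (recall the discussion following \eqref{bell_shaped} and the example of \cite[Section 2]{CG2007}).

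The subtle point, and the one I expect to be the main obstacle, is that the flux coefficient $s_\delta$ is only piecewise constant and changes at each time step, so a naive per-slab estimate would accumulate an error proportional to the number $N \sim T/\delta$ of steps and blow up as $\delta \to 0$. To control this, I would first show that $(s_\delta)_\delta$ has uniformly bounded total variation on $[0,T]$: writing $s^n = \omega\bigl(\int_\R \rho_\delta(x,t^{n-1}) \mu(x) \d x\bigr)$, the increments $|s^{n+1}-s^n|$ are governed by the Lipschitz-in-time estimate already established in the proof of Lemma \ref{bus_BV}, applied to $\rho_\delta$, whence $\TV(s_\delta) = \sum_n |s^{n+1}-s^n| \leq K(N+1)\delta \leq K(T+\delta)$, which stays bounded as $\delta \to 0$. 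Feeding this uniform control of $\TV(s_\delta)$ together with $\TV(\rho_o)$ into the appendix estimate then yields the desired bound, with the dependence on the variation of the flux coefficient remaining bounded in the limit.

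With the uniform spatial $\BV$ bound in hand, the standard $\L{1}$-Lipschitz continuity in time of $\rho_\delta$, inherited from the conservation law in \eqref{AS2020_splitting}, together with Helly's theorem (or the Kolmogorov--Riesz--Fréchet criterion) provides a subsequence converging in $\Lloc{1}(\R \times \mathopen]0,T\mathclose[, \R)$, and a.e., to some $\rho$; by lower semicontinuity of the total variation under $\L{1}$ convergence, $\rho \in \L{\infty}(\mathopen]0,T\mathclose[, \BV(\R,\R))$. Reproducing the limit passage of Theorem \ref{AWS_existence_splitting} (a.e.\ convergence of $(\sigma_\delta)_\delta$ to $\dot y$ and of $(\rho_\delta)_\delta$ to $\rho$, then Remark \ref{Rk2} for the time-continuity regularity) shows that $(\rho,y)$ is an admissible weak solution to \eqref{AS2020}, and the $\BV$ bound makes it $\BV$-regular. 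Finally, uniqueness comes for free: by Theorem \ref{BVRS_uniqueness}, and as pointed out in Remark \ref{weak_strong}, the existence of a $\BV$-regular solution forces every admissible weak solution to coincide with it, which establishes the claimed well-posedness.
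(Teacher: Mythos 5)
Your proposal is correct and follows essentially the same route as the paper: a uniform bound on $\TV(s_\delta)$ obtained by the Lipschitz-in-time argument of Lemma \ref{bus_BV}, fed into the appendix estimate of Corollary \ref{BVRS_WP_appendix} to get a uniform spatial $\BV$ bound on $\rho_\delta$, then compactness, the limit passage of Theorem \ref{AWS_existence_splitting}, lower semicontinuity of the $\BV$ semi-norm, and uniqueness via Remark \ref{weak_strong}. The only detail you pass over silently is that the slab solutions $\rho^n$ are a priori merely admissible weak solutions (from \cite[Theorem 2.11]{AGS2010}), so one needs the weak--strong identification of Remark \ref{weak_strong_appendix} to transfer the appendix $\BV$ bound to them --- a one-line point that the paper makes explicitly.
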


\begin{proof}
	Fix $\delta > 0$. Recall that $(\rho_{\delta},y_{\delta})$ is an admissible weak solution to \eqref{AS2020_splitting}. In particular, $\rho_\delta$ is an 
	admissible weak solution to the constrained conservation law in \eqref{AS2020_splitting}, in the sense of Definition \ref{AWS_appendix}. It is clear from the 
	splitting construction that for a.e. $t \in \mathopen]0,T \mathclose[$,
	\[
		\sigma_{\delta}(t) = \omega \left( \int_\R \rho_\delta (x,t-\delta) \mu(x) \d x \right).
	\]

	Following the steps of the proof of Lemma \ref{bus_BV}, we can show that for all $\delta > 0$, $\sigma_{\delta} \in \BV([0,T], \R^+)$. Even more than that, by 
	doing so we show that the sequence $\left( \TV(\sigma_{\delta}) \right)_{\delta}$ is bounded. Therefore, the sequence 
	$\left( \TV(s_{\delta}) \right)_{\delta}$ is bounded as well. Moreover, since $Q$ verifies \eqref{level_constraint_splitting}, all the hypotheses of 
	Corollary \ref{BVRS_WP_appendix} are fulfilled. Combining this with Remark \ref{weak_strong_appendix}, we get the existence of a constant 
	$C_\eps$ depending on $\| \p_s F \|_{\L{\infty}}$ such that for all $t \in [0,T]$,
	\begin{equation}
        \label{BV_bound_spliting}
        \begin{aligned}
			\TV(\rho_{\delta}(t)) 
			& \leq \TV(\rho_o) + 4R + C_\eps \left( \TV(q_\delta) + \TV(s_{\delta}) \right) \\
            & \leq \TV(\rho_o) + 4R + C_\eps (1 + \|Q'\|_{\L{\infty}}) \TV(s_{\delta}).
        \end{aligned}
	\end{equation}

	Consequently, for all $t \in [0,T]$, the sequence $(\rho_{\delta}(t))_{\delta}$ is bounded in $\BV(\R)$. A classical analysis argument -- see 
	\cite[Theorem A.8]{HRBook} -- ensures the existence of 
    $\rho \in \Czero([0,T], \Lloc{1}(\R, \R))$ such that
	\[
		\forall t \in [0,T], \; \rho_{\delta}(t) \limit{\delta}{0} \rho(t) \; \text{in} \; \Lloc{1}(\R, \R).
	\]

	With this convergence, we can follow the proof of Theorem \ref{AWS_existence_splitting} to show that $(\rho,y)$ is an admissible weak solution to 
	\eqref{AS2020}. Then, when passing to the limit in \eqref{BV_bound_spliting}, the lower semi-continuity of the $\BV$ semi-norm ensures that $(\rho,y)$ is also 
	$\BV$-regular. By Remark \ref{weak_strong}, it ensures uniqueness and concludes the proof.
\end{proof}

\subsection{Stability with respect to the weight function}

To end this section, we now study the stability of Problem \eqref{AS2020} with respect to the weight function $\mu$. More precisely, let 
$\left(\mu^{\ell}\right)_{\ell} \subset \BV(\R, \R^{+})$ be a sequence of weight functions that converges to $\mu$ in the weak $\L{1}$ sense:
\begin{equation}
	\label{weak_L1_convergence}
	\forall g \in \L{\infty}(\R, \R), \quad 
    \int_{\R} g(x) \mu^{\ell}(x) \d x \limit{\ell}{+\infty} \int_{\R} g(x) \mu(x) \d x.
\end{equation}

Let $(y_o^{\ell})_{\ell} \subset \R$ be a sequence of real numbers that converges to some $y_o$ and let $(\rho_o^{\ell})_{\ell} \subset \L{1}(\R, [0,R])$ 
be a sequence of initial data that converges to $\rho_o$ in the strong $\L{1}$ sense. We suppose that the flux function $f$ satisfies Assumptions 
\eqref{bell_shaped}-\eqref{bell_shaped_F}-\eqref{non_deg}. Theorem \ref{AWS_existence_splitting} allows us to define or all $\ell \in \N$, the couple 
$(\rho^{\ell},y^{\ell})$ as an admissible weak solution to the problem
\[
    \left\{
        \begin{array}{lcr}
            \p_{t}\rho^{\ell} + \p_{x} \left(F(\dot y^{\ell}(t),\rho^{\ell})\right) = 0 & & \R \times \mathopen]0,T \mathclose[ \\[5pt]
            \rho^{\ell}(x,0) = \rho_o^{\ell}(x+y_o^{\ell}) & & x \in \R \\[5pt]
            \left. F(\dot y^{\ell}(t),\rho^{\ell}) \right|_{x=0} \leq Q(\dot y^{\ell}(t)) & & t \in \mathopen]0,T \mathclose[ \\[5pt]
            \ds{\dot y^{\ell}(t) = \omega \left( \int_{\R} \rho^{\ell}(x,t) \mu^{\ell}(x) \d x \right)} & & t \in \mathopen]0,T \mathclose[ \\[5pt]
            y^{\ell}(0) = y_o^{\ell}. & & 
        \end{array}
    \right.
\]

\begin{remark}
	Using the same arguments as in Remark \ref{compactness_bus_splitting} and as in the proof of Theorem \ref{AWS_existence_splitting}, we get that up to the 
	extraction of a subsequence, $(y^{\ell})_{\ell}$ converges uniformly on $[0,T]$ to some $y \in \Czero([0,T], \R)$ and $(\rho^{\ell})_{\ell}$ converges a.e. on 
	$\R \times \mathopen]0,T \mathclose[$ to some $\rho \in \L{\infty}(\R \times \mathopen]0,T \mathclose[, \R)$. 
\end{remark}

\begin{theorem}
	\label{AWS_stability}
	The couple $(\rho,y)$ constructed above is an admissible weak solution to Problem \eqref{AS2020}.
\end{theorem}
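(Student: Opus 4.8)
The plan is to pass to the limit $\ell \to +\infty$ in the weak formulations \eqref{EI}, \eqref{CI} and \eqref{ODE} satisfied by each $(\rho^{\ell}, y^{\ell})$, exploiting the a.e.\ convergence $\rho^{\ell} \to \rho$ and the uniform convergence $y^{\ell} \to y$ already granted by the preceding Remark. First I would establish the convergence of the slow vehicle velocities. The key quantity to control is
\[
	\dot y^{\ell}(t) = \omega \left( \int_{\R} \rho^{\ell}(x,t) \mu^{\ell}(x) \d x \right),
\]
and the main point is to show that $\int_{\R} \rho^{\ell}(x,t) \mu^{\ell}(x) \d x \to \int_{\R} \rho(x,t) \mu(x) \d x$. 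I would split this into two pieces by triangle inequality: a term $\int_{\R} (\rho^{\ell} - \rho) \mu^{\ell} \d x$ handled by the strong $\L{1}$ convergence of $\rho^{\ell}(t)$ together with a uniform $\L{\infty}$ bound on $\mu^{\ell}$ (available if the $\TV(\mu^{\ell})$ and $\|\mu^{\ell}\|_{\L{1}}$ are bounded, which weak $\L{1}$ convergence plus uniform boundedness provides), and a term $\int_{\R} \rho (\mu^{\ell} - \mu) \d x$ handled directly by the weak $\L{1}$ convergence \eqref{weak_L1_convergence} applied with $g = \rho(\cdot, t)$. Continuity of $\omega$ then yields $\dot y^{\ell}(t) \to \dot y(t)$ for (a.e.) $t$, and dominated convergence passes this through the integral in \eqref{ODE} to confirm that $y$ satisfies the weak ODE formulation with limit data $\mu$, $\rho$, $y_o$.

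Next I would pass to the limit in the entropy inequalities \eqref{EI} and the constraint inequalities \eqref{CI}. Since $\dot y^{\ell} \to \dot y$ pointwise and these velocities enter the fluxes $\Phi_{\dot y^{\ell}(t)}$, $F(\dot y^{\ell}(t), \cdot)$, $\cR_{\dot y^{\ell}(t)}$ and the constraint $q^{\ell}(t) = Q(\dot y^{\ell}(t))$ through the Lipschitz functions $f$, $Q$ and the continuous map $\cR$, the integrands converge a.e.; combined with the a.e.\ convergence of $\rho^{\ell}$ and uniform $\L{\infty}$ bounds (all densities lie in $[0,R]$), dominated convergence on the compactly supported test-function regions lets me pass to the limit exactly as in the proof of Theorem \ref{AWS_existence_splitting}. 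The time-continuity regularity \eqref{continuous_in_l1} again follows from Remark \ref{Rk2} after passing to the limit in the $\tau = 0$, $\tau' = T$ version of the entropy inequality on $\R^{*} \times [0,T\mathclose[$. The treatment of the initial datum uses the strong $\L{1}$ convergence $\rho_o^{\ell} \to \rho_o$ and $y_o^{\ell} \to y_o$ to identify the limit initial trace as $\rho_o(\cdot + y_o)$.

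The main obstacle is the handling of the weight function in the velocity term, because $\mu^{\ell}$ converges only weakly in $\L{1}$ while $\rho^{\ell}$ converges only strongly (indeed only a.e.) — neither mode alone suffices to pass to the limit in the product $\int_{\R} \rho^{\ell} \mu^{\ell}$. The decomposition above is what resolves this: it pairs strong convergence of one factor against a uniform bound on the other in each of the two terms, so that weak-times-strong never has to be invoked directly on a single product. I would be careful to justify the uniform $\L{\infty}$ bound on the $\mu^{\ell}$ (or otherwise to argue via an approximation of $\rho(\cdot,t)$ by $\L{\infty}$ functions to which \eqref{weak_L1_convergence} applies), and to ensure the a.e.-in-$t$ velocity convergence is genuinely enough to feed dominated convergence in the remaining space-time integrals. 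Everything else is a routine repetition of the limiting arguments already carried out for Theorem \ref{AWS_existence_splitting}.
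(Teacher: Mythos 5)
Your overall architecture (two-term splitting of $\int_{\R} \rho^{\ell}\mu^{\ell}\d{x} - \int_{\R}\rho\mu\d{x}$, then re-running the limit passage of Theorem \ref{AWS_existence_splitting}) has the right skeleton, and your second term --- $\int_{\R} \rho\,(\mu^{\ell}-\mu)\d{x}$ handled by \eqref{weak_L1_convergence} with $g = \rho(\cdot,t) \in \L{\infty}$ --- is exactly what the paper does. But your treatment of the first term contains a genuine gap, in fact two. First, weak $\L{1}$ convergence does \emph{not} provide a uniform $\L{\infty}$ (or $\TV$) bound on $(\mu^{\ell})_{\ell}$: take $\mu^{\ell} = \sqrt{\ell}\,\1_{[0,1/\ell]}$, which converges to $0$ weakly in $\L{1}$ (its $\L{1}$ norm is $\ell^{-1/2}$) while $\|\mu^{\ell}\|_{\L{\infty}} = \sqrt{\ell} \to +\infty$; the theorem's hypotheses put each $\mu^{\ell}$ in $\BV(\R,\R^{+})$ but impose no uniform bound on the variations, so your claim that ``weak $\L{1}$ convergence plus uniform boundedness provides'' such a bound is false. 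Second, the strong convergence $\|\rho^{\ell}(t)-\rho(t)\|_{\L{1}(\R)} \to 0$ you invoke is not available: the compactness remark preceding the theorem grants only a.e.\ convergence on $\R \times \mathopen]0,T\mathclose[$ (equivalently $\Lloc{1}$ convergence), and upgrading this to global-in-space $\L{1}(\R)$ convergence would require a uniform tightness argument for the densities that you do not supply. Hence the estimate $\bigl|\int_{\R}(\rho^{\ell}-\rho)\mu^{\ell}\d{x}\bigr| \leq \|\mu^{\ell}\|_{\L{\infty}}\|\rho^{\ell}(t)-\rho(t)\|_{\L{1}(\R)}$ fails on both factors, and your fallback parenthetical (approximating $\rho(\cdot,t)$ by $\L{\infty}$ functions) does not address this term at all, since $\rho(\cdot,t)$ is already bounded and the difficulty sits in the factor $\rho^{\ell}-\rho$ paired against the unbounded weights.

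The paper repairs precisely this step with measure-theoretic tools your sketch does not invoke: by the Dunford--Pettis Theorem, the weakly convergent sequence $(\mu^{\ell})_{\ell}$ is equi-integrable, in the two quantified forms \eqref{equi1} (smallness on sets of small measure) and \eqref{equi2} (smallness of tails $|x| \geq X$); then, for fixed $t$, Egoroff's Theorem furnishes $E_{t} \subset [-X,X]$ with $\text{meas}([-X,X]\backslash E_{t}) < \alpha$ on which $\rho^{\ell}(t) \to \rho(t)$ \emph{uniformly}. The difference of integrals is then split into four pieces: the tail $|x| \geq X$ (controlled by tightness and the $\L{\infty}$ bound $|\rho^{\ell}-\rho| \leq R$), the set $E_{t}$ (uniform convergence of $\rho^{\ell}$ paired with the bounded $\L{1}$ norms of $\mu^{\ell}$), the small exceptional set $[-X,X]\backslash E_{t}$ (equi-integrability \eqref{equi1}), and the weak-convergence term you already have. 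Note that this pairs the $\L{\infty}$ bound on the densities against $\L{1}$-type control of the weights --- the opposite pairing from yours, and the only one the hypotheses support. Everything downstream of \eqref{AWS_stability_convergence} in your proposal (a.e.\ convergence of $\dot y^{\ell}$ via continuity of $\omega$, dominated convergence in \eqref{ODE}, limit passage in \eqref{EI}--\eqref{CI} using Riesz--Frechet--Kolmogorov for the initial data, Remark \ref{Rk2} for the regularity \eqref{continuous_in_l1}, and the continuity-in-$(\tau,\tau')$ argument) matches the paper's proof.
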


\begin{proof} 
	The sequence $(\mu^{\ell})_{\ell}$ converges in the weak $\L{1}$ sense and is bounded in $\L{1}(\R)$; by the Dunford-Pettis Theorem, this sequence is 
	equi-integrable:
	\begin{equation}
		\label{equi1}
		\forall \eps > 0, \ \exists \alpha > 0, \ \forall A \in \mathcal{B}(\R), \ \text{mes}(A) < \alpha \implies \ \forall \ell \in \N, \ 
		\int_{A} \mu^{\ell}(x) \d x \leq \eps
	\end{equation}

	and
	\begin{equation}
		\label{equi2}
		\forall \eps > 0, \ \exists X > 0, \ \forall \ell \in \N, \ \int_{|x| \geq X} \mu^{\ell}(x) \d x \leq \eps.
	\end{equation}

	Fix $t \in \mathopen]0,T \mathclose[$ and $\eps > 0$. Fix $\alpha,X > 0$ given by \eqref{equi1} and \eqref{equi2}. Egoroff Theorem yields the existence of a 
    measurable subset $E_{t} \subset [-X,X]$ such that  
	\[
		\text{meas}([-X,X] \backslash E_{t}) < \alpha \; \text{and} \; \rho^{\ell}(t) \longrightarrow \rho(t) \; \text{uniformly on} \; E_{t}.
	\]

	For a sufficiently large $\ell \in \N$,
	\[ 
		\begin{split}
			& \left| \int_{\R}\rho^{\ell}(x,t) \mu^{\ell}(x) \d x - \int_{\R}\rho(x,t) \mu(x) \d x \right| \\
			\leq & \int_{|x| \geq X} |\rho^{\ell}-\rho| \mu^{\ell} \d x + \left| \int_{E_{t}} (\rho^{\ell}-\rho) \mu^{\ell} \d x \right| 
			+ \left| \int_{[-X,X] \backslash E_{t}}(\rho^{\ell}-\rho) \mu^{\ell} \d x \right| \\
			& + \left| \int_{\R}\rho \mu^{\ell} \d x - \int_{\R}\rho \mu \d x \right| \\
			\leq & R \eps + \|\rho^{\ell}-\rho\|_{\L{\infty}(E_{t})} \int_{E_{t}} \mu^{\ell}(x)\d x 
            + R \int_{[-X,X] \backslash E_{t}} \mu^{\ell}(x)\d x + \eps \\
            \leq & 2(R+1)\eps, 
		\end{split} 
	\]

	which proves that for a.e. $t \in \mathopen]0,T \mathclose[$,
	\begin{equation}
		\label{AWS_stability_convergence}
		\int_{\R} \rho^{\ell}(x,t) \mu^{\ell}(x) \d x \limit{\ell}{+\infty} \int_{\R} \rho(x,t) \mu(x) \d x.
	\end{equation}

	We get that $y$ verifies the weak ODE formulation \eqref{ODE} by passing to the limit in
	\[
		y^{\ell}(t) = y_o^{\ell} + \int_{0}^{t} \omega \left( \int_{\R} \rho^{\ell}(x,s) \mu^{\ell}(x) \d x \right) \d s.
	\]

	By definition, for all $\ell \in \N$, the couple $(\rho^{\ell},y^{\ell})$ satisfies the analogue of entropy/constraint inequalities \eqref{EI}-\eqref{CI} with 
	suitable flux/constraint functions. Applying these inequalities with $\tau = 0$, $\tau' = T$, $\varphi \in \Cc{\infty}(\R^{*} \times [0, T\mathclose[, \R^+)$ 
    and $k \in [0,R]$, we get
	\[
		\int_{0}^{T} \int_{\R} |\rho^{\ell}-k| \p_{t}\varphi + \Phi_{\dot y^{\ell}(t)}(\rho^{\ell},k) \p_{x}\varphi \d x \d t
		+ \int_{\R}|\rho_o^{\ell}(x+y_o^{\ell})-k|\varphi(x,0) \d x \geq 0.
	\]

	The continuity of $\omega$ and the convergence \eqref{AWS_stability_convergence} ensure that $(\dot y^{\ell})_{\ell}$ converges a.e. to $\dot y$. This combined 
	with the a.e. convergence of $(\rho^{\ell})_{\ell}$ to $\rho$ and Riesz-Frechet-Kolmogorov Theorem -- $\left( \rho_o^{\ell} \right)_{\ell}$ being strongly 
	compact in $\L{1}(\R, \R)$ -- is enough to show that when letting $\ell \to +\infty$ in the inequality above, we get, up to the extraction of a subsequence, that
	\[
		\int_{0}^{T} \int_{\R} |\rho-k| \p_{t}\varphi + \Phi_{\dot y(t)}(\rho,k) \p_{x} \varphi \d x \d t 
		+ \int_{\R}|\rho_o(x+y_o)-k|\varphi(x,0) \d x \geq 0.
	\]

	Consequently, $\rho \in \Czero([0,T], \Lloc{1}(\R, \R))$, see Remark \ref{Rk2}. Finally, the combined a.e. convergences of $(\dot y^{\ell})_{\ell}$ and 
    $(\rho^{\ell})_{\ell}$ to $\dot y$ and $\rho$, respectively, guarantee that $(\rho,y)$ verifies inequalities \eqref{EI}-\eqref{CI} for almost every 
    $0 \leq \tau < \tau' \leq T$. The same continuity argument we used in the proof Theorem \ref{AWS_existence_splitting} holds here to ensure that $(\rho,y)$ 
    actually satisfies the inequalities for all $0 \leq \tau < \tau' \leq T$. This concludes the proof of our stability claim.
\end{proof}

\subsection{Discussion}\label{discussion}

The last section concludes the theoretical analysis of Problem \eqref{AS2020}. The non-locality in space of the constraint delivers an easy proof of stability with 
respect to the initial data in the $\BV$ framework. Although a proof of existence using the Fixed Point Theorem was possible (\textit{cf.} \cite{ADRR2018}), we chose 
to propose a proof based on a time-splitting technique. The stability with respect to $\mu$ is a noteworthy feature, which shows a certain sturdiness of the model. 
However, the case we had in mind -- namely $\mu \to \delta_{0^{+}}$ -- is not reachable with the assumptions we used to prove the stability, especially 
\eqref{weak_L1_convergence}. We will explore this singular limit numerically, after having built a robust convergent numerical scheme for Problem 
\eqref{AS2020}. Let us also underline that unlike in \cite{LP2018, LP2020} where the authors required a particular form for the function $\omega$ to prove 
well-posedness for their model, our result holds as long as $\omega$ is Lipschitz continuous.
\bigskip 

As evoked earlier, the non-locality in space of the constraint makes the mathematical study of the model easier. But in the modeling point of view, this choice also
makes sense for several reasons. First, one can think that the velocity $\dot y$ of the slow moving vehicle -- unlike its acceleration -- is a rather 
continuous value. Even if the driver of the slow vehicle suddenly applies the brakes, the vehicle will not decelerate instantaneously. Note that the LWR model 
allows for discontinuous averaged velocity of the agents, however while modeling the slow vehicle we are concerned with an individual agent and can model its 
behavior more precisely. Moreover, considering the mean value of the traffic density in a vicinity ahead of the driver could be seen at taking into account both 
the driver anticipation and a psychological effect. For example, if the driver sees -- several dozens of meters ahead of him/her -- a speed reduction on traffic, 
he/she will start to slow down. This observation can be related to the fact that, compared to the fluid mechanics models where the typical number of agents is 
governed by the Avogadro constant, in traffic models the number of agents is at least $10^{20}$ times less. Therefore, a mild non-locality (evaluation of the 
downstream traffic flow via averaging over a handful of preceding cars) is a reasonable assumption in the macroscopic traffic models inspired by fluid mechanics.
This point of view is exploited in the model of \cite{CFGGK2019}. Note that it is feasible to substitute the basic LWR equation on $\rho$ by the non-local LWR
introduced in \cite{CFGGK2019} in our non-local model for the slow vehicle. Such mildly non-local model remains close to the basic local model of \cite{DMG2014}. It 
can be studied combining the techniques of \cite{CFGGK2019} and the ones we developed 
in this section.

\section{Numerical approximation of the model}
\label{Section3}

In this section, we aim at constructing a finite volume scheme and at proving its convergence toward the $\BV$-regular solution to \eqref{AS2020}. We will use 
the notations:
\[
	a \vee b := \max\{a,b\} \quad \text{and} \quad a \wedge b := \min\{a,b\}.
\]

Fix $\rho_o \in \L{1}(\R, [0,R])$ and $y_o \in \R$.

\subsection{Finite volume scheme in the bus frame}

For a fixed spatial mesh size $\Delta x$ and time mesh size $\Delta t$, let $x_{j} = j\Delta x $, $t^{n} = n \Delta t$. We define the grid cells 
$\bK_{j+1/2} := \mathopen]x_{j},x_{j+1}\mathclose[$. Let $N \in \N$ such that $T \in [t^N, t^{N+1}\mathclose[$. We write
\[
    \R \times [0,T] \subset \bigcup_{n=0}^{N} \bigcup_{j \in \Z} \cP_{j+1/2}^{n}, 
    \quad \cP_{j+1/2}^{n} := \bK_{j+1/2} \times [t^{n},t^{n+1}\mathclose[.
\]

We choose to discretize the initial data $\rho_o(\cdot + y_o)$ and the weight function $\mu$ with $\left(\rho_{j+1/2}^{0}\right)_{j \in \Z}$ and 
$\left(\mu_{j+1/2}\right)_{j \in \Z}$ where for all $j \in \Z$, $\rho_{j+1/2}^{0}$ and $\mu_{j+1/2}$ are their mean values on the cell $\bK_{j+1/2}$. 

\begin{remark}
	Others choice could be made, for instance in the case 
    $\rho_o \in \Czero(\R, \R)$ such that $\ds{\lim_{|x| \to +\infty} \rho_o(x)}$ exists (in which case, the 
	limit is zero due to the integrability assumption), the values $\rho_{j+1/2}^{0} = \rho_o\left(x_{j+1/2} + y_o\right)$ can be used. The only 
	requirements are 
		\[
            \forall j \in \Z, \ \rho_{j+1/2}^{0} \in [0,R] 
            \quad \text{and} \quad 
			\rho_{\Delta}^0 = \sum_{j \in \Z} \rho_{j+1/2}^{0} \1_{\bK_{j+1/2}} 
			\limit{\Delta x}{0} \rho_o(\cdot + y_o) \ \text{in} \ \Lloc{1}(\R, \R).
		\]
\end{remark}

Fix $n \in [\![0; N-1]\!]$. At each time step we first define an approximate velocity of the slow vehicle $s^{n+1}$ and a constraint level $q^{n+1}$:
\begin{equation}
	\label{speed_update}
	s^{n+1} = \omega \left( \sum_{j \in \Z} \rho_{j+1/2}^{n} \mu_{j+1/2} \Delta x \right), \quad q^{n+1} = Q\left(s^{n+1}\right).
\end{equation}

With these values, we update the approximate traffic density with the marching formula for all $j \in \Z$:
\begin{equation}
	\label{MF}
	\rho_{j+ 1/2}^{n+1} = \rho_{j+1/2}^{n} - \frac{\Delta t}{\Delta x} 
	\left( \bF_{j+1}^{n+1}(\rho_{j+1/2}^{n},\rho_{j+3/2}^{n}) - \bF_{j}^{n+1}(\rho_{j-1/2}^{n},\rho_{j+1/2}^{n}) \right),
\end{equation}

where, following the recipe of \cite{AGS2010, CGS2013},
\begin{equation}
	\label{num_flux}
	\bF_{j}^{n+1}(a,b) = 
	\left\{ 
		\begin{array}{cl}
			\bF^{n+1}(a,b) & \text{if} \ j \neq 0 \\ 
			\min\left\{ \bF^{n+1}(a,b),q^{n+1} \right\} & \text{if} \ j = 0, 
		\end{array}
	\right.
\end{equation}

$\bF^{n+1}$ being a monotone consistent and Lipschitz numerical flux associated to 
$\rho \mapsto F(s^{n+1}, \rho)$. We will also use the notation
\begin{equation}
	\label{scheme}
	\rho_{j+1/2}^{n+1} = H_{j}^{n+1}(\rho_{j-1/2}^{n},\rho_{j+1/2}^{n},\rho_{j+3/2}^{n}),
\end{equation}

where $H_{j}^{n+1}$ is given by the expression in the right-hand side of \eqref{MF}. We then define the functions
\[
    \begin{aligned}
        & \bullet \ \rho_{\Delta}(x,t) = \sum_{n=0}^{N} \sum_{j \in \Z}  \rho_{j+1/2}^{n} \1_{\cP_{j+1/2}^{n}}(x,t) \\
        & \bullet \ s_{\Delta}(t), q_{\Delta}(t) = s^{n+1}, q^{n+1} \; \text{if} \ t \in [t^{n},t^{n+1}\mathclose[ \\
        & \bullet \ y_{\Delta}(t) = y_o + \int_{0}^{t} s_{\Delta} (u) \d u.
    \end{aligned}
\]

Let $\Delta = (\Delta x, \Delta t)$. For our convergence analysis, we will assume that $\Delta \to 0$, with $\lambda = \Delta t / \Delta x$ verifying the 
CFL condition
\begin{equation}
	\label{CFL}
	\lambda \underbrace{\sup_{t \in \R^+} \  
	\left( \left\| \p_1 \bF^t \right\|_{\L{\infty}} + \left\| \p_2 \bF^t \right\|_{\L{\infty}} \right)}_{:=L} \leq 1,
\end{equation}

where $\bF^t = \bF^t(a, b)$ is the numerical flux, associated to $\rho \mapsto F(s(t),\rho)$, we use in \eqref{MF}.

\begin{remark}
	When considering the Rusanov flux or the Godunov one, \eqref{CFL} is guaranteed when
	\[
		2 \lambda ( \|f'\|_{\L{\infty}} + \|\omega\|_{\L{\infty}} ) \leq 1.
	\]
\end{remark}

\subsection{Stability and discrete entropy inequalities}

\begin{proposition}[$\L{\infty}$ stability]
	The scheme \eqref{scheme} is
    
	(i) monotone: for all $n \in [\![0; N-1]\!]$ and $j \in \Z$, $H_{j}^{n+1}$ is nondecreasing with respect to its three arguments;

	(ii) stable: 
	\begin{equation}
		\label{stab_scheme}
		\forall n \in [\![0; N]\!], \ \forall j \in \Z, \quad \rho_{j+1/2}^{n} \in [0,R].
	\end{equation}
\end{proposition}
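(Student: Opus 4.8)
The plan is to recast the marching formula \eqref{MF} in the compact form
\[
H_j^{n+1}(a,b,c) = b - \lambda\left(\bF_{j+1}^{n+1}(b,c) - \bF_j^{n+1}(a,b)\right), \qquad \lambda = \frac{\Delta t}{\Delta x},
\]
and to treat (i) and (ii) by the classical invariant-region argument for monotone conservative schemes, the only non-routine point being the behaviour at the constrained interface $j=0$. I would first record that every numerical flux $\bF_j^{n+1}$ is nondecreasing in its first argument and nonincreasing in its second: for $j \neq 0$ this is the assumed monotonicity of $\bF^{n+1}$, while for $j=0$ the flux $\min\{\bF^{n+1}, q^{n+1}\}$ is the minimum of a function with these monotonicity properties and of a constant, hence retains them, and its one-sided slopes are bounded in absolute value by those of $\bF^{n+1}$.

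For (i), with the monotonicity of the fluxes in hand, I would compute $\p_a H_j^{n+1} = \lambda\, \p_1 \bF_j^{n+1}(a,b) \geq 0$ and $\p_c H_j^{n+1} = -\lambda\, \p_2 \bF_{j+1}^{n+1}(b,c) \geq 0$, and for the middle argument
\[
\p_b H_j^{n+1} = 1 - \lambda\, \p_1 \bF_{j+1}^{n+1}(b,c) + \lambda\, \p_2 \bF_j^{n+1}(a,b) \geq 1 - \lambda\left(\|\p_1 \bF^{n+1}\|_{\L{\infty}} + \|\p_2 \bF^{n+1}\|_{\L{\infty}}\right) \geq 1 - \lambda L \geq 0,
\]
the last inequality being the CFL condition \eqref{CFL}. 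At points where the constrained flux fails to be differentiable I would argue directly from monotonicity and the Lipschitz bounds rather than from derivatives.

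For (ii), I would argue by induction on $n$. The base case holds because $\rho_{j+1/2}^0$ is a mean value of $\rho_o \in [0,R]$. For the inductive step, monotonicity from (i) gives, whenever all $\rho_{\bullet}^n \in [0,R]$,
\[
H_j^{n+1}(0,0,0) \leq \rho_{j+1/2}^{n+1} \leq H_j^{n+1}(R,R,R),
\]
so it suffices to check $H_j^{n+1}(0,0,0) = 0$ and $H_j^{n+1}(R,R,R) = R$ for every $j$. By consistency $\bF^{n+1}(c,c) = F(s^{n+1},c)$, and at interfaces away from the origin the two flux contributions cancel, leaving $H_j^{n+1}(c,c,c) = c$. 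At the constrained interface one uses $q^{n+1} = Q(s^{n+1}) \geq 0$ together with $F(s^{n+1},0) = f(0) = 0$ and $F(s^{n+1},R) = -s^{n+1}R \leq 0$ (from the bell-shaped assumption \eqref{bell_shaped}); hence $\min\{F(s^{n+1},0),q^{n+1}\} = 0$ and $\min\{F(s^{n+1},R),q^{n+1}\} = -s^{n+1}R$, so the constraint is inactive on the states $c \in \{0,R\}$ and the cancellation persists in the update of the cells adjacent to $x=0$. This yields $0 \leq \rho_{j+1/2}^{n+1} \leq R$ and closes the induction.

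The main obstacle is precisely this last verification: that the flux-limiting $\min\{\cdot,q^{n+1}\}$ at $j=0$ neither breaks monotonicity nor destroys the invariance of the corner states $0$ and $R$. Both reduce to the single structural fact that $q^{n+1} \geq 0$ dominates $F(s^{n+1},0) = 0$ and $F(s^{n+1},R) \leq 0$, so that the constraint is inactive exactly on the two extreme constant states; away from these, the bound $q^{n+1} \geq 0$ is all that is needed for the argument to go through unchanged.
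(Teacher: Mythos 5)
Your proof is correct and follows essentially the same approach as the paper: monotonicity of $H_j^{n+1}$ via the CFL condition \eqref{CFL} and the monotonicity of the (possibly constrained) numerical fluxes, then $\L{\infty}$ stability from the sandwich between the stationary corner states $H_j^{n+1}(0,0,0)=0$ and $H_j^{n+1}(R,R,R)=R$. The only difference is that the paper delegates the verification at the constrained interface to \cite[Proposition 4.2]{AGS2010}, whereas you carry it out explicitly (correctly reducing it to $q^{n+1}\geq 0$, $F(s^{n+1},0)=0$ and $F(s^{n+1},R)\leq 0$).
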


\begin{proof}
	\textit{(i)} In the classical case, that is when $j \notin \{-1,0\}$, we simply 
	differentiate the Lipschitz function $H^{n+1}_{j}$ and make use of both the CFL 
	condition \eqref{CFL} and the monotonicity of $\bF^{n+1}$. For $j \in \{-1,0\}$, 
	note that the authors of \cite{AGS2010} pointed out (in Proposition 4.2) that 
	the modification done in the numerical flux \eqref{num_flux} does not change the 
	monotonicity of the scheme.

	\textit{(ii)} The $\L{\infty}$ stability is a consequence of the monotonicity and 
	also of the fact that $0$ and $R$ are stationary solutions of the 
	scheme. Indeed, as in \cite[Proposition 4.2]{AGS2010} for all $n \in [\![0; N]\!]$ 
	and $j \in \Z$,
	\[
		H_{j}^{n+1}(0,0,0) = 0, \quad  H_{j}^{n+1}(R,R,R) = R.
	\]
\end{proof}

In order to show that the limit of $(\rho_{\Delta})_{\Delta}$ -- under the a.e. convergence up to a subsequence -- is a solution of the conservation law in 
\eqref{AS2020}, we derive discrete entropy inequalities. These inequalities also contain terms that will help to pass to the limit in the constrained 
formulation of the conservation law, as soon as the sequence $(q_{\Delta})_\Delta$ of constraints is proved convergent as well.

\begin{proposition}[Discrete entropy inequalities]
	The numerical scheme \eqref{scheme} fulfills the following inequalities for all $n \in [\![0; N-1]\!]$, $j \in \Z$ and $k \in [0,R]$:
	\begin{equation}
		\label{DEI}
		\begin{aligned}
			& \left( |\rho_{j+1/2}^{n+1}-k| -|\rho_{j+1/2}^{n}-k| \right) \Delta x 
			+ \left( \Phi_{j+1}^{n} - \Phi_{j}^{n} \right) \Delta t \\
			& \leq \cR_{s^{n+1}}(k,q^{n+1}) \Delta t \ \delta_{j \in \{-1,0\}} 
			+ \left( \Phi_{0}^{n} - \Phi_{int}^{n} \right) \Delta t \left( \delta_{j = -1} - \delta_{j = 0} \right),
		\end{aligned}
	\end{equation}

	where $\Phi_{j}^{n}$ and $\Phi_{int}^{n}$ denote the numerical fluxes:
	\[
		\begin{aligned}
			& \Phi_{j}^{n} := 
			\bF^{n+1}(\rho_{j-1/2}^{n} \vee k,\rho_{j+1/2}^{n} \vee k) 
			- \bF^{n+1}(\rho_{j-1/2}^{n} \wedge k,\rho_{j+1/2}^{n} \wedge k), \\
			& \Phi_{int}^{n} := \min\{\bF^{n+1}(\rho_{-1/2}^{n} \vee k,\rho_{1/2}^{n} \vee k),q^{n+1}\}
			- \min\{\bF^{n+1}(\rho_{-1/2}^{n} \wedge k,\rho_{1/2}^{n} \wedge k),q^{n+1}\}
		\end{aligned}
	\]

	and
	\[
		\cR_{s^{n+1}}(k,q^{n+1}) := F(s^{n+1},k) - \min\{ F(s^{n+1},k),q^{n+1}\}.
	\]
\end{proposition}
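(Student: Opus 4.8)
The plan is to derive \eqref{DEI} by the classical Crandall--Majda argument for conservative monotone schemes, exploiting the monotonicity of $H_j^{n+1}$ established in the previous proposition together with the identity $|\rho - k| = (\rho \vee k) - (\rho \wedge k)$. First I would record the key consequence of monotonicity: abbreviating $\vec{\rho}^{\,n} = (\rho_{j-1/2}^n, \rho_{j+1/2}^n, \rho_{j+3/2}^n)$ and letting $\vee k$ and $\wedge k$ act componentwise, the fact that $H_j^{n+1}$ is nondecreasing in each slot gives $H_j^{n+1}(\vec{\rho}^{\,n} \vee k) \geq H_j^{n+1}(\vec{\rho}^{\,n}) \vee H_j^{n+1}(k,k,k)$ and the reverse inequality with $\wedge$, whence
\[
	H_j^{n+1}(\vec{\rho}^{\,n} \vee k) - H_j^{n+1}(\vec{\rho}^{\,n} \wedge k) \geq \left| \rho_{j+1/2}^{n+1} - H_j^{n+1}(k,k,k) \right|.
\]

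Next I would compute $H_j^{n+1}(k,k,k)$ in the three relevant regimes, using the consistency $\bF^{n+1}(k,k) = F(s^{n+1},k)$. Away from the constrained interface, i.e. for $j \notin \{-1,0\}$, the flux differences cancel and $H_j^{n+1}(k,k,k) = k$. At the two cells adjacent to $x = 0$ the modified flux $\min\{\bF^{n+1}, q^{n+1}\}$ enters exactly once, giving $H_{-1}^{n+1}(k,k,k) = k + \lambda\, \cR_{s^{n+1}}(k,q^{n+1})$ and $H_{0}^{n+1}(k,k,k) = k - \lambda\, \cR_{s^{n+1}}(k,q^{n+1})$, where $\lambda = \Delta t / \Delta x$ and $\cR_{s^{n+1}}(k,q^{n+1}) \geq 0$. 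I would then expand the left-hand side of the monotonicity inequality via the marching formula \eqref{MF}: the cell term produces $|\rho_{j+1/2}^n - k|$, and the two numerical-flux terms reproduce exactly the entropy fluxes $\Phi_j^n$, $\Phi_{j+1}^n$ at interior interfaces, while the interface $x_0 = 0$ contributes $\Phi_{int}^n$ instead of $\Phi_0^n$ (on the right for $j=-1$, on the left for $j=0$).

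For $j \notin \{-1,0\}$ this directly yields $|\rho_{j+1/2}^{n+1} - k| \leq |\rho_{j+1/2}^n - k| - \lambda(\Phi_{j+1}^n - \Phi_j^n)$, i.e. \eqref{DEI} with vanishing right-hand side. The cells $j \in \{-1,0\}$ are where the real work lies. There I would insert the triangle inequality $|\rho_{j+1/2}^{n+1} - k| \leq |\rho_{j+1/2}^{n+1} - H_j^{n+1}(k,k,k)| + \lambda\,\cR_{s^{n+1}}(k,q^{n+1})$, which is one-sided in the right direction precisely because $\cR_{s^{n+1}}(k,q^{n+1}) \geq 0$; this is what converts the shifted stationary value $H_j^{n+1}(k,k,k)$ back into $k$ at the cost of the $\cR$ term on the right-hand side. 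The final step is pure bookkeeping: since the statement writes the telescoping term uniformly with the unconstrained flux $\Phi_0^n$, I would add and subtract $\lambda(\Phi_0^n - \Phi_{int}^n)$, which transfers the difference between the constrained and unconstrained entropy fluxes into the factor $(\delta_{j=-1} - \delta_{j=0})$ and recovers \eqref{DEI} in the stated form. I expect the main obstacle to be exactly this reconciliation at the constrained interface: getting the signs of the $\Phi_0^n - \Phi_{int}^n$ correction and of the $\cR$ term to line up consistently for both $j=-1$ and $j=0$ under the single Kronecker-delta formulation.
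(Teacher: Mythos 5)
Your proposal is correct and follows essentially the same route as the paper: the Crandall--Majda monotonicity argument, the computation $H_{-1}^{n+1}(k,k,k) = k + \lambda\,\cR_{s^{n+1}}(k,q^{n+1})$ (and its mirror for $j=0$), and the final add-and-subtract of $\lambda(\Phi_0^n - \Phi_{int}^n)$ to reach the stated form. The only cosmetic difference is that you package the two one-sided inequalities of the paper into the single bound $H_j^{n+1}(\vec{\rho}^{\,n} \vee k) - H_j^{n+1}(\vec{\rho}^{\,n} \wedge k) \geq \bigl| \rho_{j+1/2}^{n+1} - H_j^{n+1}(k,k,k) \bigr|$ followed by a triangle inequality, which treats $j=-1$ and $j=0$ uniformly where the paper argues the $j=-1$ case and omits the symmetric one.
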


\begin{proof}
	This result is a direct consequence of the scheme monotonicity. When the constraint does not enter the calculations \textit{i.e.} $j \notin \{-1,0\}$, the proof 
	follows \cite[Lemma 5.4]{EGHBook}. The key point is not only the monotonicity, but also the fact that in the classical case, all the constants 
	$k \in [0,R]$ are stationary solutions of the scheme. This observation does not hold when the constraint enters the calculations. For example if $j = -1$,
	\[
		H_{-1}^{n+1}(k,k,k) = k + \lambda \cR_{s^{n+1}}(k,q^{n+1}).
	\]

	Consequently, we have both
	\[
		\rho_{-1/2}^{n+1} \vee k  
		\leq H_{-1}^{n+1}(\rho_{-3/2}^{n} \vee k,\rho_{-1/2}^{n} \vee k,\rho_{1/2}^{n} \vee k)
	\]

	and
	\[
		\rho_{-1/2}^{n+1} \wedge k 
		\geq H_{-1}^{n+1}(\rho_{-3/2}^{n} \wedge k,\rho_{-1/2}^{n} \wedge k,\rho_{1/2}^{n} \wedge k)
		- \lambda \cR_{s^{n+1}}(k,q^{n+1}).
	\]

	By substracting these last two inequalities, we get
	\[
		\begin{split}
            |\rho_{-1/2}^{n+1} - k|
            & = \rho_{-1/2}^{n+1} \vee k - \rho_{-1/2}^{n+1} \wedge k \\
			& \leq H_{-1}^{n+1}(\rho_{-3/2}^{n} \vee k,\rho_{-1/2}^{n} \vee k,\rho_{1/2}^{n} \vee k)
            - H_{-1}^{n+1}(\rho_{-3/2}^{n} \wedge k,\rho_{-1/2}^{n} \wedge k,\rho_{1/2}^{n} \wedge k) \\
			& + \lambda \cR_{s^{n+1}}(k,q^{n+1}) \\
			& = |\rho_{-1/2}^{n} - k| 
			- \lambda \left( \min\{\bF^{n+1}(\rho_{-1/2}^{n} \vee k,\rho_{1/2}^{n} \vee k),q^{n+1}\} 
			- \bF^{n+1}(\rho_{-1/2}^{n} \vee k,\rho_{1/2}^{n} \vee k) \right) \\
			& + \lambda \left( \min\{\bF^{n+1}(\rho_{-1/2}^{n} \wedge k,\rho_{1/2}^{n} \wedge k),q^{n+1}\} 
            - \bF^{n+1}(\rho_{-1/2}^{n} \wedge k,\rho_{1/2}^{n} \wedge k) \right) + \lambda \cR_{s^{n+1}}(k,q^{n+1}) \\
			& = |\rho_{-1/2}^{n} - k| 
			- \lambda \left(\Phi_{0}^{n} - \Phi_{-1}^{n}\right) + \lambda \left(\Phi_{0}^{n} - \Phi_{int}^n \right) 
			+ \lambda \cR_{s^{n+1}}(k,q^{n+1}),
		\end{split}
	\]

	which is exactly \eqref{DEI} in the case $j=-1$. The case $j=0$ is similar, so we omit the details of the proof for this case.
\end{proof}

Starting from \eqref{MF} and \eqref{DEI}, we can obtain approximate versions of \eqref{EI} and \eqref{CI}. Let us introduce the functions:
\[
	\Phi_{\Delta}\left( \rho_{\Delta},k \right) := \sum_{n=0}^{N} \sum_{j \in \Z} \Phi_{j}^{n} \1_{\cP_{j+1/2}^{n}}, \quad
	\bF_{\Delta}(s_{\Delta},\rho_{\Delta}) := \sum_{n=0}^{N} \sum_{j \in \Z} \bF^{n+1}(\rho_{j-1/2}^{n},\rho_{j+1/2}^{n}) \1_{\cP_{j+1/2}^{n}}.
\]

\begin{proposition}[Approximate entropy/constraint inequalities]
	(i) Fix $\varphi \in \Cc{\infty}(\R \times \R^{+}, \R^+)$ and $k \in [0,R]$. Then 
	there exists a constant $c_1$ depending only on $R, T, L$ and $\varphi$ such that 
	the following inequalities hold for all $0 \leq \tau < \tau' \leq T$:
	\begin{equation}
		\label{AEI}
		\begin{aligned}
            & \int_{\tau}^{\tau'} \int_{\R} |\rho_{\Delta}-k| \p_{t}\varphi + \Phi_{\Delta}\left( \rho_{\Delta},k \right) \p_{x}\varphi \; \d{x} \d{t} 
            + \int_{\R} |\rho_{\Delta}(x,\tau)-k| \varphi(x,\tau) \d{x} \\
			& - \int_{\R} |\rho_{\Delta}(x,\tau')-k| \varphi(x,\tau') \d{x} 
			+ 2 \int_{\tau}^{\tau'} \cR_{s_{\Delta}(t)}(k,q_{\Delta}(t)) \varphi(0,t) \d{t} \geq - c_1 (\Delta t + \Delta x).
		\end{aligned}
	\end{equation}
	
	(ii) Fix $\psi \in \Ck{\infty}([0,T], \R^+)$ and $\varphi \in \Cc{\infty}(\R, \R)$ 
	such that $\varphi(0)=1$. Then there exists a constant $c_2$ depending on 
	$R, T, L$, $Q$, $\varphi$ and $\psi$, such that for all 
	$0 \leq \tau < \tau' \leq T$:
	\begin{equation}
		\label{ACI}
		\begin{aligned}
			&  -\int_{\tau}^{\tau'} \int_{\R^{+}} \rho_{\Delta} \p_{t}(\varphi \psi) + \bF_{\Delta}(s_{\Delta},\rho_{\Delta}) \p_{x}(\varphi \psi) \; \d{x} \d{t}
			- \int_{\R^{+}} \rho_{\Delta}(x,\tau) \varphi(x) \psi(\tau) \d{x} \\
			& + \int_{\R^{+}} \rho_{\Delta}(x,\tau') \varphi(x) \psi(\tau') \d x \leq \int_{\tau}^{\tau'} q_{\Delta}(t) \psi(t) \d{t}
			+ c_2 (\Delta x + \Delta t).
		\end{aligned} 
	\end{equation}
\end{proposition}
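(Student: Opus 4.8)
The plan is to derive both approximate inequalities directly from the discrete entropy inequalities \eqref{DEI} and the marching formula \eqref{MF} by the standard Lax-Wendroff-type summation-by-parts procedure. For part (i), I would multiply \eqref{DEI} by $\varphi(x_{j+1/2}, t^n)$ (the value of the test function at a representative point of the cell $\cP_{j+1/2}^n$), then sum over all $j \in \Z$ and over the time indices $n$ corresponding to the interval $[\tau, \tau']$. The left-hand side produces discrete analogues of $\int |\rho_\Delta - k| \p_t \varphi$ and $\int \Phi_\Delta \p_x \varphi$ after discrete integration by parts (in $n$ for the time-difference term, in $j$ for the flux-difference term), while the right-hand side of \eqref{DEI} assembles into the constraint term $2\int \cR_{s_\Delta(t)}(k, q_\Delta(t))\varphi(0,t)\,\d t$. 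The telescoping of the corrector term $(\Phi_0^n - \Phi_{int}^n)(\delta_{j=-1} - \delta_{j=0})$ over $j$ is what localizes the extra contribution at the interface $x=0$ and combines with the $\delta_{j \in \{-1,0\}}$ term to yield exactly the factor $2$ in the $\cR$ term.

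\textbf{Handling the error terms.} The discrepancy between the discrete sums and the continuous integrals is controlled by Taylor expansion of $\varphi$: replacing $\varphi(x_{j+1/2}, t^n)$ by its cell/time-step averages introduces errors of order $\Delta x$ and $\Delta t$ per cell, weighted by $\|\p_t \varphi\|_{\L\infty}$, $\|\p_x \varphi\|_{\L\infty}$ and the Lipschitz constant $L$ of the numerical flux from \eqref{CFL}. Because $\varphi$ has compact support and $\rho_\Delta, k \in [0,R]$, these local errors sum to a global $O(\Delta x + \Delta t)$ bound; collecting the constants gives $c_1 = c_1(R, T, L, \varphi)$. The boundary contributions at $\tau$ and $\tau'$ require matching the initial/final data terms $\int |\rho_\Delta(\cdot,\tau)-k|\varphi(\cdot,\tau)$ to the corresponding discrete sums, which again costs only $O(\Delta x + \Delta t)$ since $\rho_\Delta$ is piecewise constant in time and $\varphi$ is Lipschitz.

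\textbf{For part (ii),} I would proceed analogously but starting from the \emph{marching formula} \eqref{MF} itself (not the entropy inequality), restricted to the cells with $j \geq 0$, since the constraint inequality \eqref{CI} involves $\rho_\Delta$ rather than $|\rho_\Delta - k|$ and is an equality-turned-inequality coming from the constrained flux at $x=0$. Multiplying by $(\varphi\psi)(x_{j+1/2}, t^n)$ and summing over $j \geq 0$ and the relevant time steps, the spatial flux-difference telescopes and leaves a boundary term at $j=0$ equal to $\bF_0^{n+1} = \min\{\bF^{n+1}(\rho_{-1/2}^n, \rho_{1/2}^n), q^{n+1}\} \leq q^{n+1} = q_\Delta$, which produces the right-hand side $\int q_\Delta \psi$. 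The remaining discretization errors are bounded as before, now also involving $\|Q'\|_{\L\infty}$ and $\psi$, giving $c_2 = c_2(R, T, L, Q, \varphi, \psi)$.

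\textbf{The main obstacle} I anticipate is the careful bookkeeping at the interface cells $j \in \{-1, 0\}$: one must verify that the corrector flux $\Phi_{int}^n$ and the jump terms assemble correctly so that the localized constraint contribution converges to the single boundary integral with the correct constant, and that no spurious interface error of order $O(1)$ (rather than $O(\Delta x + \Delta t)$) survives. This is where the precise structure of \eqref{DEI} — in particular the pairing of $\delta_{j=-1} - \delta_{j=0}$ with $\Phi_0^n - \Phi_{int}^n$ — must be exploited so that the interface terms telescope cleanly rather than accumulate; everything else is routine Lax-Wendroff error analysis.
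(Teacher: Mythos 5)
Your proposal follows essentially the same route as the paper: multiply the discrete entropy inequalities \eqref{DEI} by discretized values of the test function and apply Abel summation-by-parts for (i), and do the same starting from the marching formula \eqref{MF} summed over $j \geq 0$ for (ii), with the interface corrector $(\Phi_0^n - \Phi_{int}^n)(\delta_{j=-1}-\delta_{j=0})$ contributing only $O(\Delta x)$ because it is paired with the difference $\varphi_{-1/2}^n - \varphi_{1/2}^n$, exactly as in the paper's term $E$. The only cosmetic difference is that you use point values $\varphi(x_{j+1/2},t^n)$ where the paper uses cell/time-step averages $\varphi_{j+1/2}^n$ (whence its error bounds involve second derivatives of $\varphi$), and the dependence on $Q$ in $c_2$ enters through $\|Q\|_{\L{\infty}}$ rather than $\|Q'\|_{\L{\infty}}$; neither affects the validity of your argument.
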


\begin{proof}
	Fix $p,m \in \N$ such that $\tau \in [t^{p},t^{p+1} \mathclose[$ and 
	$\tau' \in [t^{m},t^{m+1} \mathclose[$.

	\textit{(i)} Define for all $n \in \N$ and $j \in \Z$, 
	$\ds{\varphi_{j+1/2}^{n} = \frac{1}{\Delta x \Delta t} \iint_{\cP_{j+1/2}^{n}} \varphi(x,t) \d x \d t}$. Multiplying the discrete entropy 
	inequalities \eqref{DEI} by $\varphi_{j+1/2}^{n}$, then summing over 
	$n \in [\![p; m-1]\!]$ and $j \in \Z$, one obtains after reorganization of the 
	sums (using in particular the Abel/“summation-by-parts” procedure)
	\begin{equation}
		\label{AEI_ineq}
		A + B + C + D + E  \geq 0,
	\end{equation}

	with
	\[
		\begin{aligned}
			A & = \sum_{n=p+1}^{m-1} \sum_{j \in \Z} |\rho_{j+1/2}^{n}-k| 
			\left(\varphi_{j+1/2}^{n}-\varphi_{j+1/2}^{n-1} \right) \Delta x, \quad 
			B = \sum_{n=p}^{m-1} \sum_{j \in \Z} \Phi_{j}^{n} 
			\left( \varphi_{j+1/2}^{n}-\varphi_{j-1/2}^{n} \right) \Delta t \\
			C & = \sum_{j \in \Z}|\rho_{j+1/2}^{p}-k| \varphi_{j+1/2}^{p} \Delta x 
			- \sum_{j \in \Z}|\rho_{j+1/2}^{m}-k| \varphi_{j+1/2}^{m-1} \Delta x \\
			D & = \sum_{n=p}^{m-1} \cR_{s^{n+1}}(k,q^{n+1}) \left( \varphi_{-1/2}^{n} + \varphi_{1/2}^{n}\right) \Delta t, \quad
			E = \sum_{n=p}^{m-1} \left( \Phi_{0}^{n} - \Phi_{int}^n \right) \left( \varphi_{-1/2}^{n} - \varphi_{1/2}^{n}\right) \Delta t.
		\end{aligned}
	\]

	Inequality \eqref{AEI} follows from \eqref{AEI_ineq} with
	\[
		\begin{split}
			c_1
            & = R \left(T \; \| \p_{tt}^{2} \varphi \|_{\L{\infty}([0, T], \L{1})} 
            + 4 \| \p_{t} \varphi \|_{\L{\infty}([0, T], \L{1})} \right) \\
			& + R L \left(T \; \| \p_{xx}^{2} \varphi \|_{\L{\infty}([0, T], \L{1})} 
            + 2 \| \p_{x} \varphi \|_{\L{\infty}([0, T], \L{1})} 
			+ 4 \| \varphi \|_{\L{\infty}} + 2 T \| \p_{x} \varphi \|_{\L{\infty}} \right),
		\end{split}
	\]

	making use of the bounds:
	\[
		\begin{split}
            & \left| A - \int_{\tau}^{\tau'} \int_{\R} |\rho_{\Delta}-k| \p_{t} \varphi \; \d x \d t \right| 
            \leq R \left(T \; \| \p_{tt}^{2} \varphi \|_{\L{\infty}([0, T], \L{1})} 
			+ 2 \| \p_{t} \varphi \|_{\L{\infty}([0, T], \L{1})} \right) \Delta t, \\
            & \left| B - \int_{\tau}^{\tau'} \int_{\R} \Phi_{\Delta}(\rho_{\Delta},k) \p_{x} \varphi \; \d x \d t \right| 
            \leq R L \left(T \; \| \p_{xx}^{2} \varphi \|_{\L{\infty}([0, T], \L{1})} \Delta x, 
			+ 2 \| \p_{x} \varphi \|_{\L{\infty}([0, T], \L{1})} \Delta t \right) \\
			& \left| C - \int_{\R} |\rho_{\Delta}(x,\tau) - k| \varphi(x,\tau) \d x 
            + \int_{\R} |\rho_{\Delta}(x,\tau') - k| \varphi(x,\tau') \d x \right| 
            \leq 2 R \; \| \p_{t} \varphi \|_{\L{\infty}([0, T], \L{1})} \Delta t, \\
			& \left| D - 2 \int_{\tau}^{\tau'} \cR_{s_{\Delta}(t)}(k,q_{\Delta}(t)) \varphi(0,t) \d t \right|
            \leq R L \left( 4 \| \varphi \|_{\L{\infty}} \Delta t + T\| \p_{x} \varphi \|_{\L{\infty}} \Delta x \right); \quad
            |E| \leq 2RT L \| \p_{x} \varphi \|_{\L{\infty}} \Delta x.
		\end{split}
	\]

	\textit{(ii)} In this case, the constant $c_2$ reads 
	\[
		\begin{aligned}
			c_2
			& = R \|\varphi\|_{\L{1}} \left( T \|\psi''\|_{\L{\infty}} + 4 \|\psi'\|_{\L{\infty}} \right) 
			+ \|Q\|_{\L{\infty}} \|\psi\|_{\L{\infty}} \left( 2 + T \|\varphi'\|_{\L{\infty}} \right) \\
			& + R L \|\psi\|_{\L{\infty}} \left( 2\|\varphi'\|_{\L{1}} + T \|\varphi'\|_{\L{1}} + T \|\varphi''\|_{\L{1}} \right).
		\end{aligned}
	\]

	Following the proof of \eqref{AEI}, define for all $n \in \N$ and $j \in \Z$,
	\[
		\psi^{n} = \frac{1}{\Delta t} \int_{t^{n}}^{t^{n+1}} \psi(t) \d t, \quad \text{and} \quad
		\varphi_{j+1/2} = \frac{1}{\Delta x} \int_{x_{j}}^{x_{j+1}} \varphi(x) \d x,
	\]

    multiply the scheme \eqref{MF} by $\varphi_{j+1/2} \psi^{n}$, then take the sum over $n \in [\![p; m-1]\!]$ and $j \geq 0$. Since the proof is very 
    similar to the one of \textit{(i)}, we omit the details.
\end{proof}

The final step is to obtain compactness for the sequences $(\rho_{\Delta})_{\Delta}$ and $(y_{\Delta})_{\Delta}$ in order to pass to the limit in 
\eqref{AEI}-\eqref{ACI}. We start with $(y_{\Delta})_{\Delta}$.

\begin{proposition}
	For all $t \in [0,T]$,
	\begin{equation}
		\label{AODE}
		y_{\Delta}(t) = y_o + \int_{0}^{t} \omega \left( \int_{\R} \rho_{\Delta}(x,u) \mu(x) \d{x} \right) \d{u}.
	\end{equation}

	Consequently, there exists $y \in \Czero([0,T], \R)$ such that up to an extraction, $(y_{\Delta})_{\Delta}$ converges uniformly to $y$ on $[0,T]$.
\end{proposition}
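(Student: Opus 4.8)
The plan is to prove \eqref{AODE} by an \emph{exact} identification (not a mere consistency estimate) and then to read off the compactness from a uniform Lipschitz bound, exactly in the spirit of Remark \ref{compactness_bus_splitting}. First I would fix $t \in [0,T]$ and let $n$ be such that $t \in [t^{n},t^{n+1}\mathclose[$. By the very definition of $\rho_{\Delta}$, on this time slab the reconstruction is the piecewise-constant function $\rho_{\Delta}(\cdot,t) = \sum_{j \in \Z} \rho_{j+1/2}^{n} \1_{\bK_{j+1/2}}$. Since each $\mu_{j+1/2}$ is by construction the mean value of $\mu$ over the cell $\bK_{j+1/2}$, we have $\int_{\bK_{j+1/2}} \mu(x) \d{x} = \mu_{j+1/2} \Delta x$, whence
\[
	\int_{\R} \rho_{\Delta}(x,t) \mu(x) \d{x} = \sum_{j \in \Z} \rho_{j+1/2}^{n} \mu_{j+1/2} \Delta x.
\]

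The key observation is that the right-hand side is \emph{precisely} the argument of $\omega$ in the velocity update \eqref{speed_update} that defines $s^{n+1}$. Therefore $\omega \left( \int_{\R} \rho_{\Delta}(x,t) \mu(x) \d{x} \right) = s^{n+1} = s_{\Delta}(t)$ for a.e. $t \in [0,T]$, and substituting this into the definition $y_{\Delta}(t) = y_o + \int_{0}^{t} s_{\Delta}(u) \d{u}$ yields \eqref{AODE} verbatim. I expect no genuine difficulty in this step; the only point demanding care — and the crux of the argument — is the matching of the discrete weights $\mu_{j+1/2} \Delta x$ with the exact integrals $\int_{\bK_{j+1/2}} \mu$, which is what makes the identity exact rather than approximate. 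Had $\mu_{j+1/2}$ been taken as a pointwise value rather than a cell average, only an $\mathcal{O}(\Delta x)$ error would appear and the identity would hold only up to a vanishing remainder.

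For the compactness claim I would argue as in Remark \ref{compactness_bus_splitting}. Since $\omega$ takes values in $\left[ 0, \|\omega\|_{\L{\infty}} \right]$, the density $s_{\Delta} = \dot y_{\Delta}$ satisfies $\|s_{\Delta}\|_{\L{\infty}} \leq \|\omega\|_{\L{\infty}}$ uniformly in $\Delta$, so each $y_{\Delta}$ is Lipschitz continuous on $[0,T]$ with constant $\|\omega\|_{\L{\infty}}$ and bounded by $|y_o| + T \|\omega\|_{\L{\infty}}$. Consequently $(y_{\Delta})_{\Delta}$ is bounded in $\W{1}{\infty}(\mathopen]0,T\mathclose[, \R)$, hence uniformly bounded and equi-Lipschitz, in particular equicontinuous. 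Arzel\`a--Ascoli — equivalently the compact embedding of $\W{1}{\infty}(\mathopen]0,T\mathclose[, \R)$ into $\Czero([0,T], \R)$ — then furnishes a subsequence, not relabelled, converging uniformly on $[0,T]$ to some $y \in \Czero([0,T], \R)$, which is the assertion.
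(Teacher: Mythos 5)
Your proof is correct and follows essentially the same route as the paper: the paper likewise obtains \eqref{AODE} exactly, by summing $\int s_{\Delta}$ slab by slab and identifying $s^{n+1}$ with $\omega\left( \int_{\R} \rho_{\Delta}(x,t)\mu(x)\d{x} \right)$ on $[t^{n},t^{n+1}\mathclose[$ via the cell-average discretization of $\mu$ (this is exactly \eqref{speed_VF}), and then concludes compactness from the uniform $\W{1}{\infty}$ bound and the compact embedding into $\Czero([0,T], \R)$. Your pointwise phrasing through \eqref{speed_VF} first, and your observation that the identity would only hold up to $\mathcal{O}(\Delta x)$ for a pointwise discretization of $\mu$, are accurate refinements of the same argument.
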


\begin{proof} 
	For all $t \in [0,T]$, if $t \in [t^{n},t^{n+1}\mathclose[$ for some 
	$n \in [\![0; N]\!]$, then we can write
	\[
		\begin{split}
			y_{\Delta}(t) - y_o
			& = \sum_{k=0}^{n-1} \int_{t^{k}}^{t^{k+1}} s^{k+1} \d u + \int_{t^{n}}^{t} s^{n+1} \d u \\
			& = \sum_{k=0}^{n-1} \int_{t^{k}}^{t^{k+1}} 
            \omega \left( \sum_{j \in \Z} \int_{\R} \rho^{k}_{j+1/2} \mu_{j+1/2} \Delta x \right) \d u 
            + \int_{t^{n}}^{t} \omega \left(\sum_{j \in \Z} \int_{\R} \rho^{n}_{j+1/2} \mu_{j+1/2} \Delta x \right) \d u \\
			& = \int_{0}^{t} \omega \left( \int_{\R} \rho_{\Delta}(x,u) \mu(x) \d x \right) \d u.
		\end{split}
	\]

	Let us also point out that from \eqref{speed_update}, we get that for all $\Delta$ and almost every $t \in \mathopen]0,T \mathclose[$,
	\begin{equation}
		\label{speed_VF}
		s_{\Delta}(t) = \omega \left( \int_{\R} \rho_{\Delta}(x,t)\mu(x) \d x \right).
	\end{equation}

	Combining \eqref{AODE} and \eqref{speed_VF}, we obtain that for all $\Delta$,
	\[
		\|\dot y_{\Delta}\|_{\L{\infty}} = \|s_{\Delta}\|_{\L{\infty}} \leq \|\omega\|_{\L{\infty}} 
		\quad \text{and} \quad \|y_{\Delta}\|_{\L{\infty}} \leq |y_o| + T \|\omega\|_{\L{\infty}}.
	\]

	The sequence $(y_{\Delta})_{\Delta}$ is therefore bounded in $\W{1}{\infty}(\mathopen]0,T \mathclose[, \R)$. Making use of the compact embedding of $\W{1}{\infty}(\mathopen]0,T \mathclose[, \R)$ in 
	$\Czero([0,T], \R)$, we get the existence of $y \in \Czero([0,T], \R)$ such that up to the extraction of subsequence, $(y_{\Delta})_{\Delta}$ converges uniformly to 
	$y$ on $[0,T]$.
\end{proof}

The presence of a time dependent flux in the conservation law of \eqref{AS2020} complicates the obtaining of compactness for $(\rho_{\Delta})_{\Delta}$. In 
particular, the techniques used in \cite{BGKT2008, BKT2009} to derive localized $\BV$ estimates don't apply here since our problem lacks time translation 
invariance. In the present situation, it would be possible to derive weak $\BV$ estimates (\cite{AGS2010, EGHBook}). We choose different options. Similarly to 
what we did in Section \ref{Section2}, we propose two ways to obtain compactness, which will lead to two convergence results.

\subsection{Compactness via one-sided Lipschitz condition technique}

First, we choose to adapt techniques and results put forward by Towers in \cite{TowersOSLC}. With this in mind, we suppose in this section that 
$f \in \Ck{2}([0,R], \R)$ satisfies:
\begin{equation}
	\label{ucc}
	\exists \alpha > 0, \ \forall \rho \in [0,R], \quad f''(\rho) \leq -\alpha.
\end{equation}

Though this assumption is stronger than the nondegeneracy one \eqref{non_deg}, since 
$f$ is bell-shaped, these two assumptions are similar in their spirit. We 
will also assume, following \cite{TowersOSLC}, that
\begin{equation}
    \label{flux}
    \text{the numerical flux chosen in} \ \eqref{MF} \ \text{is either the Engquist-Osher one or the Godunov one.}
\end{equation}

To be precise, the choice made for the numerical flux at the interface -- \textit{i.e.} when $j=0$ in \eqref{num_flux} -- does not play any role. What is important 
is that away from the interface, one chooses either the Engquist-Osher flux or the Godunov one. We denote for all $n \in [\![0; N+1]\!]$ and $j \in \Z$,
\[
	\bD_{j}^{n} := \max \left\{ \rho_{j-1/2}^{n} - \rho_{j+1/2}^{n},0 \right\}.
\]

We will also use the notation
\[
	\hat{\Z} := \Z \backslash \{-1,0,1\}.
\]

In \cite{TowersOSLC}, the author dealt with a discontinuous in both time and space flux and the specific "vanishing viscosity" coupling at the interface. The 
discontinuity in space was localized along the curve $\{x=0\}$. Here, we deal with only a discontinuous in time flux, but we also have a flux constraint along the 
curve $\{x=0\}$ since we work in the bus frame. The applicability of the technique of \cite{TowersOSLC} for our case with moving interface and flux-constrained 
interface coupling relies on the fact that one can derive a bound on $\bD_{j}^{n}$ as long as the "interface" does not enter the calculations for $\bD_{j}^{n}$ 
\textit{i.e.} $j \in \hat \Z$. This is what the following lemma points out under Assumptions \eqref{ucc}-\eqref{flux}. For readers' convenience and in order to 
highlight the generality of the technique of Towers \cite{TowersOSLC}, let us provide the key elements of the argumentation leading to compactness.

\begin{lemma}
	\label{OSLC_lemma1} 
	Let $n \in [\![0; N-1]\!]$ and $j \in \hat{\Z}$. Then setting 
	$\ds{a := \frac{\lambda \alpha}{4}}$, we have
	\begin{equation}
		\label{OSLC_bound1}
		\bD_{j}^{n+1} \leq 
		\max \left\{\bD_{j-1}^{n},\bD_{j}^{n},\bD_{j+1}^{n} \right\} - a \left( \max \left\{\bD_{j-1}^{n},\bD_{j}^{n},\bD_{j+1}^{n} \right\}  \right)^{2}
	\end{equation}

	and
	\begin{equation}
		\label{OSLC_bound2}
		\bD_{j}^{n+1} \leq \frac{1}{\min \{ |j|-1,n+1 \}a}.
	\end{equation}
\end{lemma}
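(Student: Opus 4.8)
The plan is to reduce the scheme near cell $j$ to the standard unconstrained monotone scheme, then to establish the one-step recursion \eqref{OSLC_bound1}, and finally to extract the explicit decay \eqref{OSLC_bound2} by induction.

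First I would reduce to the interface-free situation. Since $j \in \hat{\Z}$ forces $|j| \geq 2$, none of the indices $j-1, j, j+1$ equals $0$, so by \eqref{num_flux} both updates $\rho_{j-1/2}^{n+1}$ and $\rho_{j+1/2}^{n+1}$ use only the unmodified numerical flux $\bF^{n+1}$; the flux constraint at $\{x=0\}$ therefore plays no role in the computation of $\bD_j^{n+1}$, and \eqref{MF} is just the standard three-point monotone scheme $H(u,v,w) = v - \lambda(\bF^{n+1}(v,w) - \bF^{n+1}(u,v))$ for $g := F(s^{n+1}, \cdot)$. Note that by \eqref{ucc} one has $g \in \Ck{2}$ with $g'' = f'' \leq -\alpha$, so each frozen flux $g$ is uniformly concave with the same constant $\alpha$.

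Next, to obtain \eqref{OSLC_bound1}, I would write $\bD_j^{n+1}$ as the positive part of $H(\rho_{j-3/2}^n, \rho_{j-1/2}^n, \rho_{j+1/2}^n) - H(\rho_{j-1/2}^n, \rho_{j+1/2}^n, \rho_{j+3/2}^n)$, so that the relevant increments are exactly $\rho_{j-3/2}^n - \rho_{j-1/2}^n$, $\rho_{j-1/2}^n - \rho_{j+1/2}^n$ and $\rho_{j+1/2}^n - \rho_{j+3/2}^n$. Expanding the flux differences by the mean value theorem, the first-order contributions are controlled, thanks to the monotonicity of $H$ under the CFL condition \eqref{CFL}, by $M := \max\{\bD_{j-1}^n, \bD_j^n, \bD_{j+1}^n\}$, whereas the uniform concavity $g'' \leq -\alpha$ produces a strictly negative second-order remainder of magnitude at least $aM^2$ with $a = \lambda\alpha/4$. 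This is the discrete counterpart of Oleinik's one-sided estimate; it is here that the explicit choice \eqref{flux} of the Engquist--Osher or Godunov flux is used, and the precise computation follows \cite{TowersOSLC}. I expect this recursion to be the main obstacle, since it is the only step where both the concavity of the flux and the fine structure of the numerical flux enter essentially.

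Finally, \eqref{OSLC_bound2} follows from \eqref{OSLC_bound1} by induction on $m \geq 1$, proving that $\min\{|j|-1, n+1\} \geq m$ implies $\bD_j^{n+1} \leq 1/(am)$; the statement is then the case $m = \min\{|j|-1, n+1\}$. The only analytic inputs are elementary facts about $\phi(x) := x - ax^2$: that $\phi(x) \leq 1/(4a)$ for all $x \geq 0$, that $\phi$ is nondecreasing on $[0, 1/(2a)]$, and that $\phi(1/(am)) = (m-1)/(am^2) \leq 1/(a(m+1))$. For $m = 1$ the recursion together with $\phi \leq 1/(4a) \leq 1/a$ gives the base case. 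For the inductive step, if $\min\{|j|-1, n+1\} \geq m+1$ then every neighbour value $\bD_i^n$ with $i \in \{j-1, j, j+1\}$ satisfies $\min\{|i|-1, n\} \geq m$, hence $M \leq 1/(am)$ by the induction hypothesis; plugging into \eqref{OSLC_bound1} and using either the monotonicity of $\phi$ on $[0, 1/(2a)]$ when $m \geq 2$, or the universal bound $\phi \leq 1/(4a)$ when $m = 1$, yields $\bD_j^{n+1} \leq 1/(a(m+1))$. The factor $|j|-1$ is the discrete finite speed of propagation: the cone of dependence of cell $j$ needs roughly $|j|-1$ backward steps to reach the interface cells $\{-1, 0, 1\}$ where \eqref{OSLC_bound1} may fail, so only $\min\{|j|-1, n+1\}$ clean Oleinik steps are available.
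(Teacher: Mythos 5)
Your proposal is correct and takes essentially the same route as the paper: both reduce to the unconstrained monotone scheme for $j \in \hat{\Z}$ (so the modified interface flux never enters the stencil of $\bD_{j}^{n+1}$) and defer the one-step decay \eqref{OSLC_bound1} to the computation of \cite{TowersOSLC} for the Engquist--Osher/Godunov fluxes, while your induction on $m$ with $\phi(z)=z-az^{2}$ is a correct, self-contained unpacking of \cite[Lemma 4.3]{TowersOSLC}, which the paper simply cites for \eqref{OSLC_bound2}. The only presentational difference is that the paper's sketch of \eqref{OSLC_bound1} records the intermediate structural step -- monotonicity of the scheme and of $\psi$ reduce the claim to the configuration where $\rho_{j+3/2}^{n}-\rho_{j+1/2}^{n}$ and $\rho_{j-1/2}^{n}-\rho_{j-3/2}^{n}$ are nonpositive, the only case where the fine structure of the numerical flux is needed -- whereas your mean-value-theorem description glosses over this case reduction (a direct expansion would not produce the $-aM^{2}$ term in all sign configurations); since you defer the precise computation to the same reference, this is a matter of presentation rather than a gap.
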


\begin{proof}{\textit{(Sketched)}}
	Inequality \eqref{OSLC_bound2} is an immediate consequence of inequality \eqref{OSLC_bound1}, see \cite[Lemma 4.3]{TowersOSLC}. Obtaining inequality 
	\eqref{OSLC_bound1} however, is less immediate. Let us give some details of the proof. \\
	First, note that by introducing the function $\psi : z \mapsto z - az^{2}$, inequality \eqref{OSLC_bound1} can be stated as:
	\begin{equation}
		\label{OSLC_bound3}
		\bD_{j}^{n+1} \leq \psi \left( \max \left\{\bD_{j-1}^{n},\bD_{j}^{n},\bD_{j+1}^{n} \right\}\right).
	\end{equation}

	Then, one can show -- only using the monotonicity of both the scheme and the function $\psi$ -- that under the assumption
	\begin{equation}
		\label{OSLC_claim}
		\text{inequality} \ \eqref{OSLC_bound3} \ \text{holds when} \ (\rho_{j+3/2}^{n} - \rho_{j+1/2}^{n}), 
		\ (\rho_{j-1/2}^{n}-\rho_{j-3/2}^{n}) \leq 0,
	\end{equation}

	it follows that inequality \eqref{OSLC_bound3} holds for all cases. And finally in \cite[Page 23]{TowersOSLC}, the author proves that if the flux considered is 
	either the Engquist-Osher flux or the Godunov flux, then \eqref{OSLC_claim} holds. 
\end{proof}

The following lemma is an immediate consequence of inequality \eqref{OSLC_bound2}.

\begin{lemma}
	\label{OSLC_lemma2} 
	Fix $0 < \eps < X$. Let $i,J \in \N^{*}$ such that $\eps \in \bK_{i+1/2}$ and $X \in \bK_{J-1/2}$. Then if $\Delta x / \eps$ is sufficiently 
	small, there exists a constant $\ds{\bB= \bB\left(R,X,\frac{1}{a},\frac{1}{\eps}\right)}$, nondecreasing with respect to its arguments, such that for 
	all $n \geq i-1$,
	\begin{equation}
		\label{OSLC_bound4}
		\sum_{j=i}^{J-1} |\rho_{j+1/2}^{n}-\rho_{j-1/2}^{n}|, \sum_{j=-J+1}^{-i-1} |\rho_{j+1/2}^{n}-\rho_{j-1/2}^{n}| \leq \bB
	\end{equation}

	and
	\begin{equation}
		\label{OSLC_bound5}
		\sum_{j=i}^{J-2} |\rho_{j+1/2}^{n+1}-\rho_{j+1/2}^{n}|, \sum_{j=-J+1}^{-i-2} |\rho_{j+1/2}^{n+1}-\rho_{j+1/2}^{n}| \leq 2 \lambda L \bB.
	\end{equation}
\end{lemma}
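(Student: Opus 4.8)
The plan is to deduce both estimates directly from the pointwise bound \eqref{OSLC_bound2}, using the elementary identity that converts the one-sided quantities $\bD_{j}^{n}$ into genuine absolute increments. First I would observe that requiring $\Delta x / \eps$ small forces $i \geq 2$: since $\eps \in \bK_{i+1/2} = \mathopen]i\Delta x, (i+1)\Delta x\mathclose[$ we have $\eps/\Delta x - 1 < i < \eps/\Delta x$, so $i$ is as large as we wish once $\Delta x/\eps$ is small. This guarantees that every index occurring in the two sums of \eqref{OSLC_bound4} lies in $\hat{\Z}$ and satisfies $|j|-1 \geq 1$, so that \eqref{OSLC_bound2} applies with a finite right-hand side. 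Moreover the constraint is only felt in the cells $j \in \{-1,0\}$, so on the whole range of indices involved the marching formula \eqref{MF} reduces to the plain scheme with numerical flux $\bF^{n+1}$.

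For the spatial bound \eqref{OSLC_bound4} I would write, for each $j$,
\[
    |\rho_{j+1/2}^{n} - \rho_{j-1/2}^{n}| = 2\bD_{j}^{n} - \left( \rho_{j-1/2}^{n} - \rho_{j+1/2}^{n} \right),
\]
sum over $j$ from $i$ to $J-1$, and telescope the signed part, which collapses to $\rho_{J-1/2}^{n} - \rho_{i-1/2}^{n}$ and is therefore bounded by $R$ thanks to \eqref{stab_scheme}. It then remains to estimate $\sum_{j=i}^{J-1}\bD_{j}^{n}$, and here \eqref{OSLC_bound2}, applied at level $n \geq i-1 \geq 1$, gives $\bD_{j}^{n} \leq \bigl(\min\{j-1,n\}\,a\bigr)^{-1}$. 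Splitting the sum according to whether $j-1 \leq n$ or $j-1 > n$ produces a harmonic tail $\frac{1}{a}\sum_{k=i-1}^{J-2}\frac{1}{k} \leq \frac{1}{a}\bigl(1 + \ln\frac{J-2}{i-1}\bigr)$ together with a flat tail bounded by $\frac{J}{(i-1)a}$, where I crucially use $n \geq i-1$. Reinserting $i > \eps/\Delta x - 1$ and $J \leq X/\Delta x + 1$ turns these into quantities controlled by $X/\eps$, so the whole sum is bounded by $\frac{1}{a}\,g(X/\eps)$ for an explicit increasing $g$, and I set $\bB := R + \frac{2}{a}\,g(X/\eps)$, which is manifestly nondecreasing in $R$, $X$, $1/a$ and $1/\eps$. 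The negative-index sum is handled identically, replacing $j$ by $|j|$ throughout.

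The temporal bound \eqref{OSLC_bound5} then follows almost mechanically. For $j \in [\![ i; J-2 ]\!]$ the marching formula reads $\rho_{j+1/2}^{n+1} - \rho_{j+1/2}^{n} = -\lambda\bigl(\bF^{n+1}(\rho_{j+1/2}^{n},\rho_{j+3/2}^{n}) - \bF^{n+1}(\rho_{j-1/2}^{n},\rho_{j+1/2}^{n})\bigr)$, and the Lipschitz continuity of $\bF^{n+1}$ encoded in \eqref{CFL} bounds each increment by $\lambda\bigl(\|\p_1\bF\|_{\L{\infty}}|\rho_{j+1/2}^{n} - \rho_{j-1/2}^{n}| + \|\p_2\bF\|_{\L{\infty}}|\rho_{j+3/2}^{n} - \rho_{j+1/2}^{n}|\bigr)$. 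Summing in $j$, both spatial sums are sub-sums of the one already controlled by $\bB$ in \eqref{OSLC_bound4}, and $\|\p_1\bF\|_{\L{\infty}} + \|\p_2\bF\|_{\L{\infty}} \leq L$, so the total is at most $\lambda L \bB \leq 2\lambda L \bB$.

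I expect the only delicate point to be the harmonic-sum estimate: one must verify that the bound on $\sum_{j}\bD_{j}^{n}$ is uniform in $n \geq i-1$ and that, after reinserting the mesh relations, it depends only on $R, X, 1/a, 1/\eps$ in a monotone fashion. The flat-tail term $J/\bigl((i-1)a\bigr)$ is exactly where the hypothesis $n \geq i-1$ is indispensable, and the requirement that $\Delta x/\eps$ be small is what lets me replace $i-1$ by a fixed multiple of $\eps/\Delta x$ and thereby keep $g$ independent of the mesh.
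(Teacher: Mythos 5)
Your proof is correct and follows exactly the route the paper intends: the paper states this lemma as an ``immediate consequence'' of \eqref{OSLC_bound2}, deferring to Towers \cite{TowersOSLC}, and your argument -- the identity $|\rho_{j+1/2}^{n}-\rho_{j-1/2}^{n}| = 2\bD_{j}^{n} - (\rho_{j-1/2}^{n}-\rho_{j+1/2}^{n})$ with telescoping bounded by $R$ via \eqref{stab_scheme}, the harmonic-plus-flat-tail estimate of $\sum_j \bD_j^n$ uniform in $n \geq i-1$ using the mesh relations $i > \eps/\Delta x - 1$, $J < X/\Delta x + 1$, and then the marching formula with the Lipschitz constant $L$ for the time differences -- is precisely the computation of Towers's Lemmas 4.4--4.5 adapted to this setting. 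The details check out (including that $\Delta x/\eps$ small forces $i \geq 2$ so all indices lie in $\hat{\Z}$ and the constrained interface $j=0$ never enters), and your bound $\lambda L \bB$ is in fact slightly sharper than the stated $2\lambda L \bB$.
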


\begin{proposition}
	There exists $\rho \in \L{\infty}(\R \times \mathopen]0,T \mathclose[, \R)$
	such that up to the extraction of a subsequence, $(\rho_{\Delta})_{\Delta}$ 
	converges almost everywhere to $\rho$ in $\R \times \mathopen]0,T \mathclose[$.
\end{proposition}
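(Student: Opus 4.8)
The plan is to upgrade the localized discrete $\BV$ estimates of Lemma \ref{OSLC_lemma2} into genuine space-time compactness via Helly's selection theorem, and then to recover the a.e.\ convergence on the whole of $\R \times \mathopen]0,T \mathclose[$ by a diagonal extraction over an exhaustion of $\R^{*} \times \mathopen]0,T \mathclose[$ by compact sets avoiding the interface. The point to keep in mind is that the bounds \eqref{OSLC_bound4}--\eqref{OSLC_bound5} degenerate both as the spatial cut-off $\eps \to 0$ (the constant $\bB$ is nondecreasing in $1/\eps$) and for small time levels (they are only asserted for $n \geq i-1$). Hence there is no hope for a global $\BV$ bound as in the proof of Theorem \ref{BVRS_WP_splitting}, where \cite[Theorem A.8]{HRBook} could be invoked directly; instead one must work away from $\{x=0\}$ and from $\{t=0\}$, and exploit at the very end that the interface is Lebesgue-negligible.

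First I would fix $0 < \eps < X$ and let $i, J \in \N^{*}$ be as in Lemma \ref{OSLC_lemma2}, so that $\eps \in \bK_{i+1/2}$ and $X \in \bK_{J-1/2}$. Since $x_{i} = i\Delta x \to \eps$, the first admissible time level satisfies $t^{i-1} = (i-1)\Delta t = \lambda(i\Delta x - \Delta x) \to \lambda \eps$ as $\Delta \to 0$. I would then pick any $\theta > \lambda \eps$; for $\Delta$ small enough one has $t^{i-1} < \theta$, so \eqref{OSLC_bound4} and \eqref{OSLC_bound5} hold for every level $n$ with $t^{n} \geq \theta$. Summing the spatial estimate \eqref{OSLC_bound4} against $\Delta t$ over the time levels in $[\theta,T]$, and the temporal estimate \eqref{OSLC_bound5} against $\Delta x$ over the spatial cells between $\eps$ and $X$ (and likewise on the mirror interval $[-X,-\eps]$), one checks that $\rho_{\Delta}$ is bounded, uniformly in $\Delta$, in $\BV$ of the bounded Lipschitz domain $Q_{\eps,X,\theta} := \left( [\eps,X] \cup [-X,-\eps] \right) \times [\theta,T]$, the resulting total variation being controlled by a multiple of $(T + TL)\,\bB$. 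By the compact embedding of $\BV(Q_{\eps,X,\theta})$ into $\L{1}(Q_{\eps,X,\theta})$, a subsequence converges in $\L{1}$, and a further extraction yields a.e.\ convergence on $Q_{\eps,X,\theta}$; the limit takes values in $[0,R]$ thanks to the stability bound \eqref{stab_scheme}.

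Finally I would apply this on a countable family. Choosing sequences $\eps_{m} \downarrow 0$, $X_{m} \uparrow +\infty$ and $\theta_{m} \downarrow 0$ with $\theta_{m} > \lambda \eps_{m}$ (for instance $\theta_{m} = 2\lambda\eps_{m}$), the sets $Q_{m} := Q_{\eps_{m},X_{m},\theta_{m}}$ satisfy $\bigcup_{m} Q_{m} = \R^{*} \times \mathopen]0,T \mathclose[$. A standard diagonal argument -- extracting at stage $m$ a subsequence of the one retained at stage $m-1$ -- produces a single subsequence of $(\rho_{\Delta})_{\Delta}$, not relabelled, that converges a.e.\ on each $Q_{m}$, hence a.e.\ on $\R^{*} \times \mathopen]0,T \mathclose[$, to some $\rho$. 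Since it is one and the same subsequence, the limits on overlapping $Q_{m}$ coincide a.e., so $\rho$ is well defined and $\rho \in \L{\infty}(\R \times \mathopen]0,T \mathclose[, [0,R])$. Because $\{x=0\} \times \mathopen]0,T \mathclose[$ is Lebesgue-negligible in $\R \times \mathopen]0,T \mathclose[$, this convergence holds a.e.\ on all of $\R \times \mathopen]0,T \mathclose[$, which is the claim.

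The genuinely delicate point is entirely contained in Lemmas \ref{OSLC_lemma1}--\ref{OSLC_lemma2}, which are already granted; the only real care needed here is organisational, namely tracking the simultaneous degeneration of the estimates at the interface $\{x=0\}$ and near the initial time, and arranging the cut-offs $\eps_{m}, \theta_{m}$ so that both vanish together while the excluded regions shrink to a null set. Once this coupling is handled, the passage from local $\BV$ bounds to global a.e.\ convergence is routine.
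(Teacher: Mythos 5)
Your proof is correct and follows essentially the same route as the paper: both fix $0<\eps<X$, restrict to times $t>\lambda\eps$ so that Lemma \ref{OSLC_lemma2} applies, obtain an a.e.\ convergent subsequence on each truncated domain away from the interface and from $t=0$, and conclude by a diagonal extraction together with the Lebesgue-negligibility of $\{x=0\}$. The only (harmless) difference is the final compactness tool: the paper feeds the uniform spatial variation bound \eqref{OSLC_bound6} and the $\L{1}$ time-translate estimate \eqref{OSLC_bound7} directly into the Helly-type theorem \cite[Theorem A.8]{HRBook}, whereas you aggregate the same discrete estimates \eqref{OSLC_bound4}--\eqref{OSLC_bound5} into a genuine space-time $\BV$ bound on the Lipschitz domain $Q_{\eps,X,\theta}$ and invoke the compact embedding of $\BV$ into $\L{1}$ there -- both are valid and deliver exactly the claimed a.e.\ convergence.
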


\begin{proof}
	Fix $0 < \eps < X$ and $t > \lambda \eps$. Denote by 
	$\Omega(X,\eps) := \mathopen]-X,-\eps\mathclose[\cup \mathopen]\eps, X\mathclose[$. 
	Introduce $i,J,n \in \N$ such that $\eps \in \bK_{i+1/2}$, $X \in \bK_{J-1/2}$ and 
	$t \in [t^{n},t^{n+1}\mathclose[$. Remark that
	\[
		(n+1) \Delta t > t > \lambda \eps \geq \lambda (i \ \Delta x) = i \Delta t,
	\]

	\textit{i.e.} $n \geq i-1$. Then, if we suppose that $\Delta x / \eps$ is 
	sufficiently small, we can use Lemma \ref{OSLC_lemma2}. From \eqref{OSLC_bound4}, 
	we get
	\begin{equation}
		\label{OSLC_bound6}
		\TV(\rho_{\Delta}(t)_{|\Omega(X,\eps)}) \leq 2 \bB
	\end{equation}

	and from \eqref{OSLC_bound5}, we deduce
	\begin{equation}
		\label{OSLC_bound7}
		\int_{\Omega(X,\eps)} |\rho_{\Delta}(x,t+\Delta t) - \rho_{\Delta}(x,t)| \d x \leq 4 L \bB \Delta t.
	\end{equation}

	Combining \eqref{OSLC_bound6}-\eqref{OSLC_bound7} and the $\L{\infty}$ bound \eqref{stab_scheme}, a functional analysis result (\cite[Theorem A.8]{HRBook}) 
	ensures the existence of a subsequence which converges almost everywhere to some $\rho$ on $\Omega(X,\eps) \times \mathopen]\lambda \eps,T\mathclose[$. 
	By a standard diagonal process we can extract a further subsequence (which we do 
	not relabel) such that $(\rho_{\Delta})_{\Delta}$ converges almost everywhere to 
	$\rho$ on $\R \times \mathopen]0,T \mathclose[$.
\end{proof}

\begin{theorem}
	\label{convergence_OSLC}
	Fix $\rho_o \in \L{1}(\R, [0,R])$ and $y_o \in \R$. Suppose that 
	$f \in \Ck{2}$ satisfies Assumptions \eqref{bell_shaped}-\eqref{bell_shaped_F}-\eqref{ucc}. Suppose also that in \eqref{num_flux}, we use the Engquist-Osher flux 
	or the Godunov one when $j \neq 0$ and any other monotone consistent and Lipschitz 
	numerical flux when $j=0$. Then under the CFL condition \eqref{CFL}, the scheme 
	\eqref{speed_update}~--~\eqref{num_flux} converges to an admissible weak solution to Problem \eqref{AS2020}.
\end{theorem}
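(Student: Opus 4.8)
The plan is to check that the limit couple $(\rho,y)$ furnished by the compactness results satisfies the four items of Definition~\ref{AWS}, by letting $\Delta \to 0$ in the approximate relations \eqref{AEI}, \eqref{ACI} and \eqref{AODE}. The argument runs in close parallel to the proof of Theorem~\ref{AWS_existence_splitting}, the discrete inequalities \eqref{AEI}--\eqref{ACI} playing the role of \eqref{EI_splitting}--\eqref{CI_splitting}. First I would record the compactness already at hand: up to a subsequence that I do not relabel, $(y_\Delta)_\Delta$ converges uniformly on $[0,T]$ to some $y \in \Czero([0,T],\R)$, and $(\rho_\Delta)_\Delta$ converges a.e. on $\R \times \mathopen]0,T\mathclose[$ to some $\rho \in \L{\infty}(\R \times \mathopen]0,T\mathclose[, [0,R])$. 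In particular, since $\rho_\Delta$ is uniformly bounded, this a.e. convergence upgrades to convergence in $\Lloc{1}(\R \times \mathopen]0,T\mathclose[, \R)$.

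Next I would pass to the limit in the ODE. As $\mu \in \L{1}\cap\L{\infty}$, dominated convergence gives $\int_\R \rho_\Delta(x,u)\mu(x)\,\d{x} \to \int_\R \rho(x,u)\mu(x)\,\d{x}$ for a.e. $u$; since $\omega$ is continuous and bounded, a second application of dominated convergence in \eqref{AODE}, together with the uniform convergence $y_\Delta \to y$, yields \eqref{ODE}. Feeding this back into \eqref{speed_VF} and using the continuity of $\omega$ and of $Q$, I obtain the a.e. convergences $s_\Delta \to \dot y$ and $q_\Delta = Q(s_\Delta) \to Q(\dot y)=:q$ on $\mathopen]0,T\mathclose[$.

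I would then pass to the limit in the entropy inequalities \eqref{AEI}: the right-hand side $-c_1(\Delta t + \Delta x)$ vanishes, the remainder converges since $\cR_{s_\Delta(t)}(k,q_\Delta(t)) \to \cR_{\dot y(t)}(k,q(t))$ by the a.e. convergence of $s_\Delta,q_\Delta$ and the continuity of $F$ and of $r \mapsto \min\{\cdot,r\}$, and the numerical entropy flux satisfies $\Phi_\Delta(\rho_\Delta,k) \to \Phi_{\dot y(t)}(\rho,k)$ (see below). As all integrands are uniformly bounded, dominated convergence yields \eqref{EI} for a.e. $0 \leq \tau < \tau' \leq T$, and the continuity-in-$(\tau,\tau')$ argument of Theorem~\ref{AWS_existence_splitting} promotes it to every $(\tau,\tau')$. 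To secure the regularity \eqref{continuous_in_l1}, I would apply \eqref{AEI} with $\tau=0$, $\tau'=T$ and test functions $\varphi \in \Cc{\infty}(\R^{*} \times [0,T\mathclose[,\R^+)$, so the constraint term $\varphi(0,\cdot)$ drops, pass to the limit, and invoke Remark~\ref{Rk2} with $\Omega=\R^{*}$. The constraint inequalities \eqref{ACI} are handled in the same way, using $\bF_\Delta(s_\Delta,\rho_\Delta) \to F(\dot y(t),\rho)$ a.e., $q_\Delta \to q$ a.e. and $c_2(\Delta x+\Delta t)\to 0$, which gives \eqref{CI}. Together these establish all four conditions of Definition~\ref{AWS}.

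The main obstacle is the convergence of the reconstructed numerical fluxes $\bF_\Delta(s_\Delta,\rho_\Delta)$ and $\Phi_\Delta(\rho_\Delta,k)$, because $\bF^{n+1}$ is evaluated at the two distinct neighbouring values $\rho_{j-1/2}^{n}$ and $\rho_{j+1/2}^{n}$. The key observation is that the one-cell shift $\rho_\Delta(\cdot-\Delta x,t)$ coincides with the piecewise-constant reconstruction of the left-neighbour values, so by $\L{1}$-continuity of translations and the $\Lloc{1}$ convergence of $\rho_\Delta$ it also converges to $\rho$ in $\Lloc{1}$; passing to a further subsequence, both arguments converge a.e. to the same limit $\rho$. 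Combining this with the consistency relation $\bF^{n+1}(\rho,\rho)=F(s^{n+1},\rho)$, the uniform Lipschitz continuity of the flux family guaranteed by the CFL bound \eqref{CFL}, and the a.e. convergence $s_\Delta \to \dot y$, one gets $\bF_\Delta(s_\Delta,\rho_\Delta)\to F(\dot y(t),\rho)$; the analogous computation with $\rho_{j\pm1/2}^{n}\vee k$ and $\rho_{j\pm1/2}^{n}\wedge k$ yields $\Phi_\Delta(\rho_\Delta,k)\to F(\dot y(t),\rho\vee k)-F(\dot y(t),\rho\wedge k)=\Phi_{\dot y(t)}(\rho,k)$.
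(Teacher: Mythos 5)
Your proof follows essentially the same route as the paper's: record the compactness already established ($y_\Delta \to y$ uniformly, $\rho_\Delta \to \rho$ a.e.), pass to the limit in \eqref{AODE} by dominated convergence, deduce the a.e. convergence $s_\Delta \to \dot y$ from \eqref{speed_VF}, obtain the regularity \eqref{continuous_in_l1} via Remark~\ref{Rk2} with test functions supported in $\R^{*} \times [0,T\mathclose[$, pass to the limit in \eqref{AEI}--\eqref{ACI} for a.e. $(\tau,\tau')$, and promote to all $(\tau,\tau')$ by the continuity argument. Your translation argument showing that the shifted reconstruction $\rho_\Delta(\cdot-\Delta x,\cdot)$ converges to the same limit, combined with consistency and the uniform Lipschitz bound to get $\bF_\Delta(s_\Delta,\rho_\Delta) \to F(\dot y(t),\rho)$ and $\Phi_\Delta(\rho_\Delta,k) \to \Phi_{\dot y(t)}(\rho,k)$, correctly fills in a step the paper leaves implicit.
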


\begin{proof}
	We have shown that -- up to the extraction of a subsequence -- $y_{\Delta}$ 
	converges uniformly on $[0,T]$ to some $y \in \Czero([0,T], \R)$ and that 
	$\rho_{\Delta}$ converges a.e. on $\R \times \mathopen]0,T \mathclose[$ to some 
	$\rho \in \L{\infty}(\R \times \mathopen]0,T \mathclose[, \R)$. We now prove that 
	this couple $(\rho,y)$ is an admissible weak solution to Problem \eqref{AS2020} in 
	the sense of Definition \ref{AWS}.

	Recall that for all $\Delta$ and $t \in [0,T]$,
	\[
		y_{\Delta}(t) = y_o + \int_{0}^{t} \omega \left( \int_{\R} \rho_{\Delta}(x,u) \mu(x) \d x \right) \d u.
	\]

	When letting $\Delta \to 0$, the Dominated Convergence Theorem ensures that $y$ satisfies \eqref{ODE}. Apply inequality \eqref{AEI} with 
	$\tau = 0$, $\tau' = T$, $\varphi \in \Cc{\infty}(\R^{*} \times [0, T\mathclose[, \R^+)$ 
    and $k \in [0,R]$ to obtain
	\[
		\int_{0}^{T} \int_{\R} |\rho_{\Delta}-k| \p_{t}\varphi + \Phi_{\Delta}(\rho_{\Delta},k) \p_{x} \varphi \; \d x \d t 
		+ \int_{\R}|\rho_{\Delta}^0-k|\varphi(x,0) \d x \geq 
        - c_1 (\Delta x + \Delta t).
	\]

	Then the a.e. convergence of $(s_{\Delta})_{\Delta}$ to $\dot y$ -- coming from \eqref{speed_VF} -- and the a.e. convergence of $(\rho_{\Delta})_{\Delta}$ to 
	$\rho$ ensure that when letting $\Delta \to 0$, we get
	\[
		\int_{0}^{T} \int_{\R} |\rho-k| \p_{t}\varphi + \Phi_{\dot y(t)}(\rho,k) \p_{x} \varphi \d x \d t 
		+ \int_{\R}|\rho_o(x+y_o)-k|\varphi(x,0) \d x \geq 0,
	\]

	and consequently $\rho \in \Czero([0,T], \Lloc{1}(\R, \R))$, see Remark \ref{Rk2}. 
	Now, we pass to the limit in \eqref{AEI} and \eqref{ACI} using the 
	a.e. convergence of $(s_{\Delta})_{\Delta}$ to $\dot y$ and of $(\rho_{\Delta})_
	{\Delta}$ to $\rho$ as well as the continuity of $Q$ and $\omega$. 
	We find that for all test functions $\varphi \in \Cc{\infty}(\R\times\R^{+}, \R^+)$ 
	and $k \in [0,R]$, the following inequalities hold for almost every 
	$0 \leq \tau < \tau' \leq T$:
	\[
		\begin{aligned}
			& \int_{\tau}^{\tau'} \int_{\R} |\rho-k| \p_{t}\varphi + \Phi_{\dot y(t)}(\rho,k) \p_{x} \varphi \; \d x \d t 
			+ \int_{\R}|\rho(x,\tau)-k|\varphi(x,\tau) \d x \\
			& - \int_{\R}|\rho(x,\tau')-k|\varphi(x,\tau') \d x  + 2 \int_{\tau}^{\tau'} \cR_{\dot y(t)}(k,q(t)) \varphi(0,t) \d t \geq 0.
		\end{aligned}
	\]

	To conclude, note that the expression in the left-hand side of the previous inequality is a continuous function of $(\tau,\tau')$ which is almost everywhere 
	greater than the continuous function $0$. By continuity, this expression is everywhere greater than $0$, which proves that $\rho$ satisfies the entropy 
    inequalities \eqref{EI}. Using similar arguments, one shows that $\rho$ also satisfies the constraint inequalities \eqref{CI}. This shows that the couple 
    $(\rho,y)$ is an admissible weak solution to \eqref{AS2020}, and that concludes the proof of convergence.
\end{proof}

We proved than in the $\L{\infty}$ framework, the scheme converges to an admissible weak solution, but note that there is no guarantee of uniqueness in this 
construction. Also stress that we cannot extend this result to general consistent monotone numerical fluxes beyond hypothesis \eqref{flux}.

\subsection{Compactness via global BV bounds}

The following result is the discrete version of Lemma \ref{bus_BV}, so it is consistent 
that the proof uses the discrete analogous arguments of the ones we used in 
the proof of Lemma \ref{bus_BV}.

\begin{lemma}
	\label{bus_BV_VF}
	Introduce for all $\Delta > 0$ the function $\xi_{\Delta}$ defined for all $t \in [0,T]$ by
	\[
		\xi_{\Delta}(t) := \int_{\R} \rho_{\Delta}(x,t) \mu(x) \d{x}.
	\]

	Then $\xi_{\Delta}$ has bounded variation and consequently, so does $s_{\Delta}$.
\end{lemma}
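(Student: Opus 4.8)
The plan is to exploit the piecewise-constant structure of $\rho_{\Delta}$, which reduces the total variation of $\xi_{\Delta}$ to a sum of jumps that I control via the marching formula \eqref{MF} and a discrete summation by parts --- the discrete counterpart of the integration against $\mu$ used in the proof of Lemma \ref{bus_BV}. First I would note that for $t \in [t^{n},t^{n+1})$ one has $\rho_{\Delta}(\cdot,t) = \sum_{j}\rho_{j+1/2}^{n}\1_{\bK_{j+1/2}}$, so that integrating against $\mu$ reproduces exactly the cell averages:
\[
	\xi_{\Delta}(t) = \sum_{j \in \Z}\rho_{j+1/2}^{n} \int_{\bK_{j+1/2}}\mu(x)\,\d{x} = \sum_{j\in\Z}\rho_{j+1/2}^{n}\,\mu_{j+1/2}\,\Delta x, \qquad t \in [t^{n},t^{n+1}).
\]
Thus $\xi_{\Delta}$ is a step function in time, and its total variation is the sum of its jumps,
\[
	\TV(\xi_{\Delta}) = \sum_{n=0}^{N-1}\left|\sum_{j\in\Z}\left(\rho_{j+1/2}^{n+1}-\rho_{j+1/2}^{n}\right)\mu_{j+1/2}\,\Delta x\right|.
\]

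Next I would insert the marching formula \eqref{MF} into each jump. Writing $G_{j}^{n} := \bF_{j}^{n+1}(\rho_{j-1/2}^{n},\rho_{j+1/2}^{n})$, the inner sum equals $-\Delta t\sum_{j} \mu_{j+1/2}(G_{j+1}^{n} - G_{j}^{n})$, and an Abel summation by parts transfers the difference onto the weight:
\[
	\sum_{j\in\Z}\left(\rho_{j+1/2}^{n+1}-\rho_{j+1/2}^{n}\right)\mu_{j+1/2}\,\Delta x = \Delta t\sum_{j\in\Z}\left(\mu_{j+1/2}-\mu_{j-1/2}\right)G_{j}^{n}.
\]
This is the discrete analogue of the passage in Lemma \ref{bus_BV} where the distributional formulation moved $\p_{x}$ onto the smoothed weight; here no smoothing of $\mu$ is needed, since only its cell averages enter. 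The boundary terms of the summation by parts vanish because $\mu \in \L{1}\cap\BV(\R,\R)$ forces $\mu_{j+1/2}\to 0$ as $|j|\to+\infty$ while the fluxes stay bounded, and the rearranged series converges absolutely thanks to $\sum_{j}|\mu_{j+1/2}-\mu_{j-1/2}| \leq \TV(\mu)$.

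It then remains to bound the two factors. On the one hand, consistency of $\bF^{n+1}$ with $F(s^{n+1},\cdot)$, together with $F(s,0)=0$ and the Lipschitz bound encoded in the CFL constant $L$, yields a uniform estimate $|G_{j}^{n}| \leq \mathcal{M}$ for all $j,n$, with $\mathcal{M}$ depending only on $L$ and $R$ (and on $\|Q\|_{\L{\infty}}$ at the interface $j=0$, where the flux is capped by $q^{n+1}$). On the other hand, the discrete total variation of the cell averages is controlled by that of $\mu$:
\[
	\sum_{j\in\Z}\left|\mu_{j+1/2}-\mu_{j-1/2}\right| \leq \frac{1}{\Delta x}\int_{\R} |\mu(x+\Delta x)-\mu(x)|\,\d{x} \leq \TV(\mu).
\]
Combining these, each jump is at most $\Delta t\,\mathcal{M}\,\TV(\mu)$; summing over the at most $N$ time steps and using $N\Delta t \leq T$ gives $\TV(\xi_{\Delta}) \leq \mathcal{M}\,T\,\TV(\mu)$, a bound uniform in $\Delta$.

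Finally, since $s_{\Delta} = \omega\circ\xi_{\Delta}$ by \eqref{speed_VF} and $\omega$ is Lipschitz, one concludes $\TV(s_{\Delta}) \leq \|\omega'\|_{\L{\infty}}\,\TV(\xi_{\Delta}) < +\infty$, so that $s_{\Delta}\in\BV([0,T],\R)$, as claimed. The computations are routine once the step structure of $\xi_{\Delta}$ is noticed; the only point requiring care --- the main, if modest, obstacle --- is justifying the summation by parts for the infinite spatial sums and handling the interface-modified flux at $j=0$, both of which are settled by the $\L{1}\cap\BV$ decay of $\mu$ and the uniform flux bound.
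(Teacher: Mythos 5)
Your proof is correct, and its engine is the same as the paper's: insert the marching formula \eqref{MF} into the time increments of $\xi_{\Delta}$ and perform an Abel summation by parts to transfer the flux difference onto the cell averages of the weight, then bound the fluxes by $RL$ (consistency, $F(s,0)=0$, and the Lipschitz constant from \eqref{CFL}; note that at $j=0$ the capped flux satisfies $|\min\{\bF^{n+1},q^{n+1}\}|\leq RL$ anyway since $q^{n+1}\geq 0$, so the dependence on $\|Q\|_{\L{\infty}}$ you mention is superfluous). Where you genuinely diverge is in the treatment of $\mu$: the paper, mirroring its proof of Lemma \ref{bus_BV}, first mollifies $\mu$ into $(\mu_{\ell})_{\ell}$, derives a Lipschitz-type modulus $|\xi_{\Delta,\ell}(t)-\xi_{\Delta,\ell}(s)|\leq RLK(|t-s|+2\Delta t)$, and only then lets $\ell\to+\infty$ before summing to get $\TV(\xi_{\Delta})\leq 3RLK(T+\Delta t)$. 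You observe that at the discrete level the mollification is unnecessary, because only the cell averages $\mu_{j+1/2}$ enter and cell averaging does not increase variation, $\sum_{j}|\mu_{j+1/2}-\mu_{j-1/2}|\leq \TV(\mu)$ via the translation estimate for $\BV$ functions; computing $\TV(\xi_{\Delta})$ directly as the sum of its jumps then yields the cleaner, equally $\Delta$-uniform bound $\mathcal{M}\,T\,\TV(\mu)$. Your version is leaner and arguably sharper in its constant; the paper's detour through $\mu_{\ell}$ buys nothing at the discrete level beyond structural parallelism with the continuous Lemma \ref{bus_BV} (where smoothness of the weight \emph{is} needed to integrate by parts in $x$). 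Your handling of the two points that require care -- vanishing of boundary terms in the infinite summation by parts (tail integrals of $\mu\in\L{1}$ vanish, so $\mu_{j+1/2}\to 0$ as $|j|\to+\infty$) and absolute convergence of the rearranged series -- is sound, and your final step $\TV(s_{\Delta})\leq\|\omega'\|_{\L{\infty}}\TV(\xi_{\Delta})$ is the correct Lipschitz argument (the paper's displayed factor $\|\omega\|_{\L{\infty}}$ there is evidently a typo for $\|\omega'\|_{\L{\infty}}$).
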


\begin{proof}
	Since $\mu \in \BV(\R, \R)$, there exists a sequence of smooth functions $(\mu_{\ell})_{\ell \in \N} \subset \BV \cap \Cc{\infty}(\R, \R)$ such that
	\[
		\| \mu_{\ell} - \mu \|_{\L{1}} \limit{\ell}{+\infty} 0 \quad \text{and} \quad \TV(\mu_{\ell}) \limit{\ell}{+\infty} \TV(\mu).
	\]

	Introduce for all $\ell \in \N$ and $t \in [0,T]$, the function $\ds{\xi_{\Delta, \ell}(t) = \int_{\R} \rho_{\Delta}(x,t) \mu_{\ell}(x) \d x}$ and let 
	$K > 0$ such that
	\[
		\forall \ell \in \N, \quad \|\mu_{\ell}\|_{\L{1}}, \TV(\mu_{\ell}) \leq K.
	\]

	For all $\ell \in \N$ and $t,s \in [0,T]$, if $t \in [t^{k},t^{k+1}\mathclose[$ and $s \in [t^{m},t^{m+1}\mathclose[$, we have
	\[
		\begin{split}
			\left| \xi_{\Delta, \ell}(t) - \xi_{\Delta, \ell}(s) \right|
			& = \left| \xi_{\Delta, \ell}(t^{k}) - \xi_{\Delta, \ell}(t^{m}) \right| \\
			& = \left| \int_{\R} \rho_{\Delta}(x,t^{k}) \mu_{\ell}(x) \d x - \int_{\R} \rho_{\Delta}(x,t^{m}) \mu_{\ell}(x) \d x \right| \\
            & = \left| \sum_{j \in \Z} (\rho^{k}_{j+1/2} - \rho^{m}_{j+1/2}) \mu^{\ell}_{j+1/2} \Delta x \right|, 
            \quad \mu^{\ell}_{j+1/2} = \frac{1}{\Delta x} \int_{x_j}^{x_{j+1}} \mu_{\ell}(x) \d{x} \\
			& = \left| \sum_{j \in \Z} \sum_{\tau=m}^{k-1} (\rho^{\tau+1}_{j+1/2} - \rho^{\tau}_{j+1/2}) \mu^{\ell}_{j+1/2} \Delta x \right| \\
			& = \left| \sum_{\tau=m}^{k-1} \sum_{j \in \Z} \left( \bF_{j}^{\tau+1}(\rho_{j-1/2}^{\tau},\rho_{j+1/2}^{\tau}) - 
			\bF_{j+1}^{\tau+1}(\rho_{j+1/2}^{\tau},\rho_{j+3/2}^{\tau}) \right) \mu^{\ell}_{j+1/2} \Delta t \right| \\
			 &= \left| \sum_{\tau=m}^{k-1} \sum_{j \in \Z} \bF_{j+1}^{\tau+1}(\rho_{j+1/2}^{\tau},\rho_{j+3/2}^{\tau})
			(\mu_{j+3/2}^{\ell} - \mu_{j+1/2}^{\ell}) \Delta t \right| \\
            & \leq R L \sum_{\tau=m}^{k-1} \TV(\mu_{\ell}) \Delta t \leq R LK (|t-s| + 2\Delta t).
		\end{split}
	\]

	Consequently, for all $\ell \in \N$, $\Delta > 0$ and $t,\tau \in [0,T]$, the triangle inequality yields:
	\[
		\left|\xi_{\Delta}(t) - \xi_{\Delta}(\tau) \right| \leq 2R \|\mu-\mu_{\ell}\|_{\L{1}} + R LK (|t-\tau| + 2\Delta t).
	\]

	Letting $\ell \to +\infty$, we get that for all $\Delta > 0$ and $t,\tau \in [0,T]$,
	\[
		\left|\xi_{\Delta}(t) - \xi_{\Delta}(\tau) \right| \leq R LK (|t-\tau| + 2\Delta t),
	\]

	which leads to
	\[
		\TV(\xi_{\Delta}) = \sum_{k=0}^{N} \left|\xi_{\Delta}(t^{k+1}) - \xi_{\Delta}(t^{k}) \right| \leq 3 R LK (T + \Delta t).
	\]

	This proves that $\xi_{\Delta} \in \BV([0,T], \R)$. Since $\omega$ is Lipschitz continuous, $s_{\Delta}$ also has bounded variation.
\end{proof}

\begin{theorem}
	\label{convergence_BV}
	Fix $\rho_o \in \L{1} \cap \BV(\R, [0,R])$ and $y_o \in \R$. Suppose that $f$ satisfies \eqref{bell_shaped}-\eqref{bell_shaped_F}-\eqref{reg_flux} and 
	that $Q$ satisfies \eqref{level_constraint_splitting}. Suppose also that in \eqref{num_flux}, we use the Godunov flux when $j = 0$ and any other monotone 
	consistent and Lipschitz numerical flux when $j \neq 0$. Then under the CFL condition \eqref{CFL}, the scheme \eqref{speed_update}~--~\eqref{num_flux} converges 
	to a $\BV$-regular solution to Problem \eqref{AS2020}.
\end{theorem}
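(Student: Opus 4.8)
The plan is to reproduce, at the discrete level, the strategy of the proof of Theorem \ref{BVRS_WP_splitting}: secure a uniform-in-$\Delta$ spatial $\BV$ bound on $\rho_\Delta$, extract a strongly convergent subsequence, pass to the limit in the approximate inequalities \eqref{AEI}--\eqref{ACI} exactly as in Theorem \ref{convergence_OSLC}, and finally upgrade the limit to a $\BV$-regular solution by lower semicontinuity of the total variation. The ingredients already at hand are the $\L{\infty}$ stability \eqref{stab_scheme}, the monotonicity of the scheme, the uniform convergence of $(y_\Delta)_\Delta$ coming from \eqref{AODE}, the identity \eqref{speed_VF}, and, crucially, Lemma \ref{bus_BV_VF}, which furnishes a bound on $\TV(s_\Delta)$ that is uniform in $\Delta$.

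The heart of the argument is the uniform discrete $\BV$ bound. Since the conservation law carries a time-dependent flux $F(s_\Delta(t),\cdot)$ together with the flux constraint at $x=0$, I would invoke the discrete $\BV$ estimate of the appendix, valid under \eqref{reg_flux}, \eqref{level_constraint_splitting} and the Godunov choice at $j=0$. It provides the discrete counterpart of \eqref{BV_bound_spliting}, namely a constant $C_\eps$ depending on $\|\p_s F\|_{\L{\infty}}$ such that for all $t \in [0,T]$,
\[
\TV(\rho_\Delta(t)) \le \TV(\rho_\Delta^0) + 4R + C_\eps\big(\TV(q_\Delta)+\TV(s_\Delta)\big) \le \TV(\rho_\Delta^0) + 4R + C_\eps(1+\|Q'\|_{\L{\infty}})\,\TV(s_\Delta).
\]
Because $\TV(\rho_\Delta^0) \le \TV(\rho_o)$ and $\TV(s_\Delta)$ is bounded by Lemma \ref{bus_BV_VF}, the right-hand side is bounded uniformly in $\Delta$ and $t$, so that $\sup_\Delta \sup_{t \in [0,T]} \TV(\rho_\Delta(t)) < +\infty$.

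With the spatial $\BV$ bound secured, I would add the time-translation estimate read off the marching formula \eqref{MF}, namely $\|\rho_\Delta(\cdot,t+\Delta t)-\rho_\Delta(\cdot,t)\|_{\L{1}(\R)} \le L\,\Delta t\,\TV(\rho_\Delta(t))$, and apply the compactness result \cite[Theorem A.8]{HRBook} to extract a subsequence (not relabeled) with $\rho_\Delta \to \rho$ in $\Czero([0,T],\Lloc{1}(\R,\R))$, hence a.e. on $\R \times \mathopen]0,T\mathclose[$. Through \eqref{speed_VF} and the continuity of $\omega$ this forces $s_\Delta \to \dot y$ a.e., so that following verbatim the passage to the limit of Theorem \ref{convergence_OSLC} in \eqref{AEI}--\eqref{ACI} shows that $(\rho,y)$ satisfies \eqref{EI}, \eqref{CI} and \eqref{ODE}, i.e. is an admissible weak solution.

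The upgrade to $\BV$-regularity is then immediate: letting $\Delta \to 0$ in the uniform bound above and invoking the lower semicontinuity of the total variation yields $\rho \in \L{\infty}(\mathopen]0,T\mathclose[,\BV(\R,\R))$, so $(\rho,y)$ is $\BV$-regular. By Remark \ref{weak_strong} the existence of such a solution forces uniqueness of admissible weak solutions, whence the whole sequence $(\rho_\Delta,y_\Delta)_\Delta$ converges to it. The main obstacle is the discrete $\BV$ bound itself: unlike the one-sided Lipschitz section, here the Godunov choice at the interface is indispensable, the time-dependence of the flux coefficient $s_\Delta$ must be absorbed through $\TV(s_\Delta)$ (supplied by Lemma \ref{bus_BV_VF}), and the possible total-variation increase at the constraint is tamed only because \eqref{level_constraint_splitting} keeps the constraint level a fixed distance $\eps$ below the maximal flux.
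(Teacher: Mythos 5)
Your proposal is correct and takes essentially the same route as the paper's proof: the discrete $\BV$ estimate of the appendix (Lemma \ref{compactness_density_appendix}) combined with the uniform bound on $\TV(s_\Delta)$ from Lemma \ref{bus_BV_VF}, strong compactness, passage to the limit exactly as in Theorem \ref{convergence_OSLC}, and lower semicontinuity of the total variation. The only cosmetic deviations are the compactness reference --- the paper invokes \cite[Appendix]{DE2016} instead of \cite[Theorem A.8]{HRBook} with an explicit time-translation estimate --- and a small inaccuracy in that estimate: at the constrained interface $j=0$ the flux difference contributes an extra bounded term, so the correct bound is of the form $\|\rho_\Delta(\cdot,t+\Delta t)-\rho_\Delta(\cdot,t)\|_{\L{1}(\R)} \leq \Delta t\left(L\,\TV(\rho_\Delta(t)) + C\right)$, which is harmless for the conclusion.
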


\begin{proof}
	All the hypotheses of Lemma \ref{compactness_density_appendix} are fulfilled. Consequently, there exists a constant $C_\eps > 0$ such that for all 
	$n \in [\![0; N-1]\!]$,
	\begin{equation}
		\label{convergence_BV_bound1}
		\begin{aligned}
			\TV\left( \rho_{\Delta} (t^{n+1}) \right) 
			& \leq \TV(\rho_o) + 4R + C_\eps \left( \sum_{k=0}^{n} \left|q^{k+1}- q^{k} \right| 
			+ \sum_{k=0}^{n} \left|s^{k+1}- s^{k} \right| \right) \\
			& \leq \TV(\rho_o) + 4R + C_\eps (1+\|Q'\|_{\L{\infty}}) \sum_{k=0}^{n} \left|s^{k+1}- s^{k} \right|.
		\end{aligned}
	\end{equation}

	Making use of Lemma \ref{bus_BV_VF}, we obtain that for all $n \in [\![0; N]\!]$,
	\[
        \sum_{k=0}^{n} |s^{k+1}- s^{k}|
        = \sum_{k=0}^{n} |s_{\Delta}(t^{k+1})- s_{\Delta}(t^{k})|
        \leq \|\omega\|_{\L{\infty}} \sum_{k=0}^{n} |\xi_{\Delta}(t^{k+1})- \xi_{\Delta}(t^{k})|
        \leq 3 R LK \|\omega\|_{\L{\infty}} (T + \Delta t).
	\]
	
	where the constant $K$ was introduced in the proof of Lemma \ref{bus_BV_VF}. The two last inequalities imply that for all $t \in [0,T]$, we have
	\begin{equation}
		\label{convergence_BV_bound2}
		\TV(\rho_{\Delta}(t)) \leq \TV(\rho_o) + 4R + 3C_\eps (1+\|Q'\|_{\L{\infty}}) \|\omega\|_{\L{\infty}} R L K (T + \Delta t).
	\end{equation}

	Therefore, the sequence $(\rho_{\Delta})_{\Delta}$ is uniformly in time bounded in $\BV(\R, \R)$. Using \cite[Appendix]{DE2016}, we get the existence of 
	$\rho \in \Czero([0,T], \Lloc{1}(\R, \R))$ such that
	\[
		\forall t \in [0,T], \quad 
		\rho_{\Delta}(t) \limit{\Delta}{0} \rho(t) \; \text{in} \; \Lloc{1}(\R, \R).
	\]

	Following the proof of Theorem \ref{convergence_OSLC}, we show that $(\rho,y)$ is an admissible weak solution. Then passing to the limit in 
	\eqref{convergence_BV_bound2}, the lower semi-continuity of the $\BV$ semi-norm ensures that $(\rho,y)$ is also $\BV$-regular.
\end{proof}

\begin{remark}
	Note the complementarity of the hypotheses made in the above theorem with the ones of Theorem \ref{convergence_OSLC}. Recall that in Theorem 
	\ref{convergence_OSLC}, we needed the Godunov flux only away from the interface.
\end{remark}

\newpage 

\section{Numerical simulations}
\label{Section4}

In this section we present some numerical tests performed with the scheme analyzed in Section \ref{Section3}. In all the simulations we take the uniformly concave 
flux $f(\rho) = \rho(1-\rho)$ (the maximal car velocity and the maximal density are assumed to be equal to one). Following the hypotheses of Theorem 
\ref{convergence_BV}, we choose the Godunov flux at the interface, and the Rusanov one away from the interface. We will use weight functions of the kind
\[
	\mu_k (x) = 2^k \1_{\left[ 0 ; \frac{1}{2^k} \right]}(x),
\]

for one (in Section \ref{NS1}) or several (in Section \ref{NS2}) values of $k \in \N^*$.

\subsection{Validation of the scheme}
\label{NS1}

In this section, consider a two-lane road on which a bus travels with a speed given by the function
\[
	\omega (\rho) = 
	\left\{
		\begin{array}{ccc}
			\ds{\frac{\alpha}{(\beta + \rho)^2}} & \text{if} & 0 \leq \rho \leq 0.6 \\[10pt]
			1 - \rho & \text{if} & 0.6 \leq \rho \leq 1,
		\end{array}
	\right.
\]

where $\alpha$ and $\beta$ are chosen so that $\omega(0) = 0.7$ and $\omega(0.6) = 0.4$, as illustrated in Figure \ref{fig1} (left). The set-up of the 
experiment is the following. Consider a domain of computation $[0, 11]$, the weight function $\mu_4$ and the following data:
\[
	\rho_o(x) = 0.5 \1_{[0.5 ; 1]} (x), \quad y_o = 1.5, \quad Q(s) = 0.75 \times \left( \frac{1-s}{2} \right)^2.
\]

The idea behind the choice of $Q$ is that in average (between the two lanes), the presence of the slow vehicle reduces by $25\%$ the maximum traffic flow. As 
we can see in Figure \ref{fig1} (right), the slow vehicle nearly always travels at maximum velocity. It makes sense because even though we can see that cars are 
overtaking it (Figure \ref{fig1}, right and Figure \ref{fig2}), the density $\xi$ ahead of it is never sufficiently important to make it go slower. 

\begin{figure}[!htp]
	\begin{center}
	  \includegraphics[scale = 0.60]{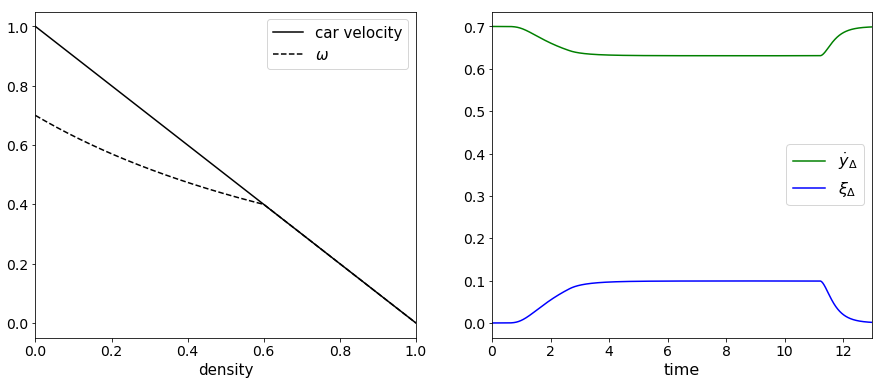}
    \end{center}
    \vspace*{-5mm}
    \caption{Evolution in time of the bus velocity $\dot y_\Delta$ and of the subjective density $\xi_\Delta$, with $\Delta x = 0.01$.}
    \label{fig1}
\end{figure}

\newpage

\begin{figure}[!htp]
	\begin{center}
	  \includegraphics[scale = 0.60]{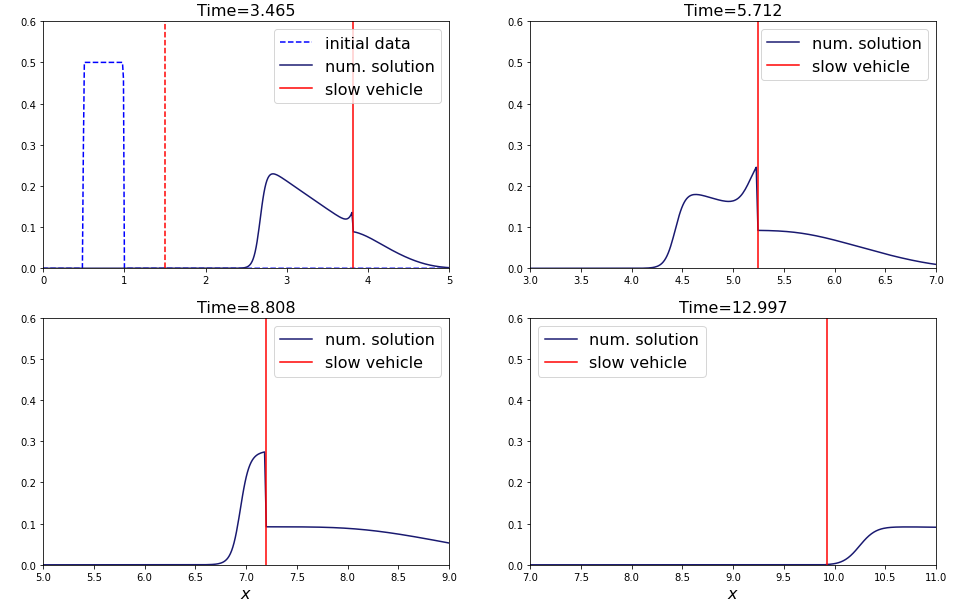}
    \end{center}
    \vspace*{-5mm}
    \caption{The numerical solution at different fixed times, red dashed lines correspond to the slow vehicle initial position.}
    \label{fig2}
\end{figure}

\begin{remark}
	The function $\omega$ we chose above is not of the form as required in \cite{LP2018, LP2020}. Once again, let us stress that the particular form 
	$\omega(\rho) = \min\left\{V_{\text{bus}}, 1-\rho \right\}$, where $V_{\text{bus}}$ is the maximum bus velocity, is crucial for the well-posedness result 
	of \cite{LP2018, LP2020} to hold. Indeed, it is essential in the analysis of \cite{LP2018, LP2020} that the velocity of the bus be constant (equal to 
	$V_{\text{bus}})$ across the non-classical shocks. Our non-local model is not bound to this restriction.
\end{remark}

\subsection{Convergence analysis}

We also perform a convergence analysis for this test. In the Table \ref{table1}, we computed the relative errors
\[
	\mathbf{E}_{\rho, \Delta} := \| \rho_{\Delta} - \rho_{\Delta/2} \|_{\L{1}(\mathopen]0,T \mathclose[, \L{1}(\R))} 
	\quad \text{and} \quad 
	\mathbf{E}_{y, \Delta} := \|y_{\Delta} - y_{\Delta/2} \|_{\L{\infty}([0, T])},
\]

for different number of space cells at the final time $T = 13$. We see (Figure \ref{fig3}) that those ratios converge with convergence orders approximately 
equal to $0.76$ for the car density and approximately equal to $1.1$ for the slow moving vehicle position.

\newpage

\begin{table}[!htp]
    \begin{minipage}[b]{0.4\linewidth}
    \centering
        \begin{tabular}{ccc}
            \hline
            Number of cells & $\mathbf{E}_{\rho, \Delta} \ (\times 10^{-2})$ & $\mathbf{E}_{y, \Delta} \ (\times 10^{-3})$ \\ 
            \hline\hline
            160 & $24.053$ & $48.0643$ \\
            320 & $15.731$ & $15.939$ \\
            640	& $9.647$ & $7.698$ \\
            1280 & $6.197$ & $3.715$ \\ 
            2560 & $3.226$ & $1.777$ \\
            5120 & $1.936$ & $0.889$ \\
            10240 & $1.055$ & $0.443$ \\
            \hline
        \end{tabular}
        \caption{Measured errors ($T = 13$).}
        \label{table1}
    \end{minipage}\hfill
    \begin{minipage}[b]{0.45\linewidth}
    \centering
        \includegraphics[scale=0.55]{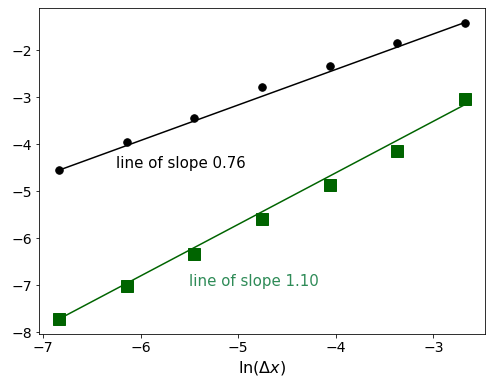}
        \captionof{figure}{Rates of convergence for $\rho_\Delta$ (in black) and $y_\Delta$ (in green), with $T = 13$.}
        \label{fig3}
    \end{minipage}
\end{table}

\subsection{Comparisons with experiments on the local model}
\label{NS2}

Now we confront the numerical tests performed with our model with the tests done by the authors in \cite{CDMG2017} approximating the original problem of 
\cite{DMG2014}. We deal with a road of length $1$ parametrized by the interval $[0,1]$ and choose the weight function $\mu_3$. Moreover,
\[
	\omega(\rho) = \min\{0.3 ; \ 1 - \rho \} \quad \text{and} \quad Q(s) = 0.6 \times \left( \frac{1-s}{2} \right)^2.
\]

First, consider the initial datum
\begin{equation}
	\label{case1}
	\rho_o(x) = 
	\left\{ 
		\begin{array}{ccc} 
			0.4 & \text{if} & x < 0.5 \\ 
			0.5 & \text{if} & x > 0.5 
		\end{array} 
	\right. \ y_o = 0.5.
\end{equation}

The numerical solution is composed of two classical shocks separated by a non-classical discontinuity, as illustrated in Figure \ref{fig4} (left). Next, we choose
\begin{equation}
	\label{case2}
	\rho_o(x) = 
	\left\{ 
		\begin{array}{ccc} 
			0.8 & \text{if} & x < 0.5 \\ 
			0.5 & \text{if} & x > 0.5 
		\end{array}
	\right. \ y_o = 0.5.
\end{equation}

The values of the initial condition create a rarefaction wave followed by a non-classical and classical shocks, as illustrated in Figure \ref{fig4} (right).

\newpage

\begin{figure}[!htp]
	\begin{center}
		\includegraphics[scale = 0.60]{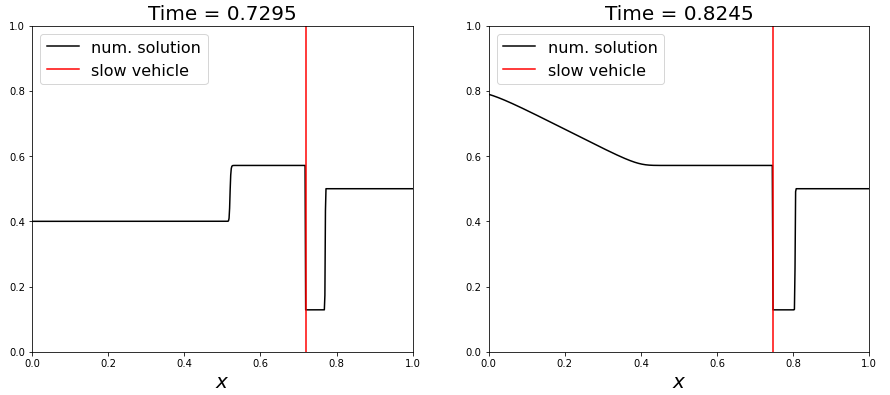}
	\end{center}
	\vspace*{-5mm}
	\caption{Evolution in time of the numerical density corresponding to initial data \eqref{case1} (left) and \eqref{case2} (right), with $\Delta x = 0.001$.}
	\label{fig4}
\end{figure}

Finally, still following \cite{CDMG2017}, we consider
\begin{equation}
	\label{case3}
	\rho_o(x) = 
	\left\{ 
		\begin{array}{ccc} 
			0.8 & \text{if} & x < 0.5 \\ 
			0.4 & \text{if} & x > 0.5 
		\end{array}
	\right. \ y_o = 0.4.
\end{equation}

Here the solution is composed of a rarefaction wave followed by non-classical and classical shocks on the density that are created when the slow vehicle
approaches the rarefaction and initiates a moving bottleneck, as illustrated in Figure \ref{fig5}. 

\begin{figure}[!htp]
	\begin{center}
		\includegraphics[scale = 0.60]{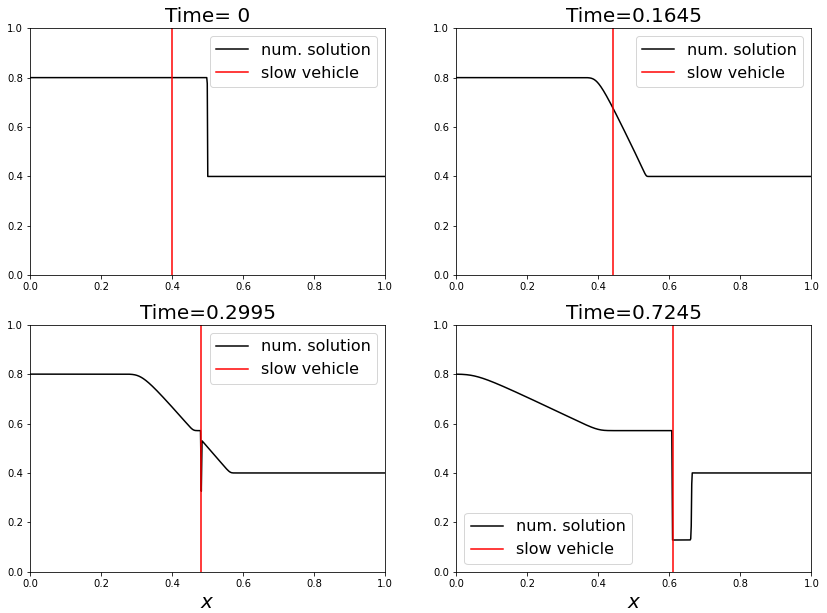}
	\end{center}
	\vspace*{-5mm}
	\caption{Evolution in time of the numerical density corresponding to initial data \eqref{case3}, with $\Delta x = 0.001$.}
	\label{fig5}
\end{figure}

With these three tests, we can already see -- in a qualitative way -- the resemblance between the numerical approximations to the solutions to our model and the 
numerical approximations of \cite{CDMG2017}. One way to quantify their proximity is for example to evaluate the $\L{1}$ error between the car densities and the 
$\L{\infty}$ error between the bus positions. More precisely, denote by $(\rho_{\Delta},y_{\Delta})$ the approximation of the $\BV$-regular solution to 
\eqref{AS2020} obtained with the scheme \eqref{speed_update}~--~\eqref{num_flux}, and denote by $(\overline{\rho}_{\Delta},\overline{y}_{\Delta})$ the couple 
obtained with this same scheme but
\[
	\text{replacing} \quad 
	s^{n+1} = \omega \left( \sum_{j \in \Z} \rho_{j+1/2}^{n} \mu_{j+1/2} \Delta x \right) \quad \text{by} \quad 
	s^{n+1} = \omega \left(\rho_{1/2}^{n} \right).
\]

Let us precise that this is not the scheme the authors of \cite{CDMG2017} proposed. However, this scheme is consistent with the problem
\begin{equation}
	\label{DMG}
	\left\{
		\begin{array}{lcr}
			\p_{t}\rho + \p_{x} \left(F(\dot y(t),\rho) \right) = 0 & & \R \times \mathopen]0,T \mathclose[ \\[5pt]
			\rho(x,0) = \rho_o(x+y_o) & & x \in \R \\[5pt]
			\left. F(\dot y(t),\rho) \right|_{x=0} \leq Q(\dot y(t)) & & t \in \mathopen]0,T \mathclose[ \\[5pt]
			\ds{\dot y(t) = \omega \left( \rho(0+,t) \right)} & & t \in \mathopen]0,T \mathclose[ \\[5pt]
			y(0) = y_o & & 
		\end{array}
	\right.
\end{equation}

and behaves in a stable way in the calculations we performed. Therefore, the couple 
$(\overline{\rho}_{\Delta},\overline{y}_{\Delta})$ is expected to give a 
reasonable approximation of the solution to \eqref{DMG}. With this in mind, for the case \eqref{case3} and still with the weight function $\mu_3$, we 
computed in Table \ref{table2} the measured errors 
\[
	\mathbf{E}^1_\Delta := \|\rho_{\Delta} - \overline{\rho_\Delta} \|_{\L{1}(\mathopen]0,T \mathclose[, \L{1}(\R))} \quad \text{and} \quad
	\mathbf{E}^{\infty}_\Delta :=  \|y_{\Delta} - \overline{y_\Delta} \|_{\L{\infty}([0, T])}.
\]

\begin{table}[!htp]
	\centering
	\begin{tabular}{ccc}
		\hline
		Number of cells & $\mathbf{E}^1_\Delta \ (\times 10^{-4})$ & $\mathbf{E}^{\infty}_\Delta \ (\times 10^{-3})$ \\ 
		\hline\hline
		160		& $32.672$ & $18.519$ \\
		320 	& $14.236$ & $7.341$ \\
		640		& $5.837$ & $3.701$ \\
		1280 	& $3.833$ & $4.879$ \\
		2560 	& $3.207$ & $6.405$ \\
		5120 	& $2.922$ & $7.144$ \\
		10240 & $2.776$ & $7.501$ \\
		20480 & $2.698$ & $7.674$ \\
		40960 & $2.658$ & $7.759$ \\
		\hline
	\end{tabular}
	\caption{Measured errors at time $T=0.7245$.}
	\label{table2}
\end{table}

These calculations indicate that for a sufficiently large number of cells $J \geq 40960$,
\[
	\mathbf{E}^1_\Delta \simeq 2.7 \times 10^{-4} \quad \text{and} \quad \mathbf{E}^{\infty}_\Delta \simeq 7.6 \times 10^{-3}.
\]

This indicates the discrepancy between our non-local and the local model \eqref{DMG} of \cite{DMG2014}. The idea is now to fix the number of cells $J = 40960$ and 
to make the length of the weight function support go to zero. In Table \ref{table3}, we have computed, for different weight functions, the error between the 
approximations of the two models. This error corresponds, as in the above calculation, to the residual error observed starting from a sufficiently small $\Delta x$.

\begin{table}[!htp]
	\centering
		\begin{tabular}{cccc}
			\hline
			weight function & $\mathbf{E}^1_\Delta$ & $\mathbf{E}^{\infty}_\Delta$ \\ 
			\hline\hline
			$\mu_1$ & $6.810 \times 10^{-3}$ & $5.489 \times 10^{-2}$ \\
			$\mu_2$ & $1.105 \times 10^{-3}$ & $1.972 \times 10^{-2}$ \\
			$\mu_3$ & $2.658 \times 10^{-4}$ & $7.759 \times 10^{-3}$ \\
			$\mu_4$ & $9.232 \times 10^{-5}$ & $2.913 \times 10^{-3}$ \\
			$\mu_5$ & $6.190 \times 10^{-5}$ & $9.110 \times 10^{-4}$ \\
			\hline  
		\end{tabular}
	\caption{Measured errors at time $T=0.7245$}
	\label{table3}
\end{table}

\begin{remark}
	\label{rk_criticism}
	The previous simulations show a closeness between our model as $\mu \to \delta_{0^{+}}$ and \eqref{DMG}. Let us however point that the non-locality in 
	space for the slow vehicle introduces an undesirable artifact into the model. In the rarefaction regime one may observe that the large vehicle may move a bit 
	faster that the surrounding flow. The situation where this effect becomes truly perceptible is when considering initial data of the type
	\begin{equation}
		\label{rev2}
		\rho_o(x) = 
		\left\{
			\begin{array}{ccl}
				1 & \text{if} & x < x_{b} \\[5pt]
				0 & \text{if} & x > x_{b}.
			\end{array}
		\right.
	\end{equation}

	Indeed, for such data, there exists a small time interval $[0, \delta]$ in which $\dot y(t) > v(\rho(y(t)^+)) = 0$, which would suggest that the slow 
	vehicle moves forward while the cars in front of it do not. This time interval is in fact quite small due to the narrowness of the support of the 
	weight function. The local model does not develop such phenomena. This qualitative artifact precludes us from giving a microscopic interpretation to the model, 
	which main output is the global influence of the slow vehicle on the flow; however, let us stress that the phenomenon becomes quantitatively negligible for 
	larger times. Indeed, Ole\u{ı}nik estimate on decay of positive waves ensures that data of the type \eqref{rev2} evolve into 
	rarefaction waves and do not appear while driving: the classical LWR model precludes the formation of rarefaction waves focused at positive time. The 
	modification of the classical LWR brought by the constraint may produce non-classical waves at positive times; while these waves are downward jumps in density 
	like in \eqref{rev2}, they are situated precisely at the location of the constraint and not slightly behind it, like in \eqref{rev2}.
\end{remark}

Even if we are unable, at this time, to rigorously link our problem \eqref{AS2020} with $\mu \to \delta_{0^{+}}$ and the original problem \eqref{DMG} of the 
authors in \cite{DMG2014}, this last experiment corroborates the conjecture that the local model \eqref{DMG} is the singular limit of our model in the case 
$\omega$ is of the form $\omega(\rho) = \min\left\{V_{\text{bus}} ; 1-\rho \right\}$. The other interesting question is whether the local model is well posed 
beyond this particular choice of $\omega$.

\begin{thx}
    The author is most grateful to Boris Andreianov for his constant support and many enlightening discussions.
\end{thx}

\newpage 

\appendix

\section{On BV bounds for limited flux models}

We focus on the study of the following class of models:
\begin{equation}
	\label{AS_appendix}
	\left\{
		\begin{array}{lcr}
			\p_{t}\rho + \p_{x} \left(F(s(t),\rho) \right) = 0 & &\R \times \mathopen]0,T \mathclose[ \\[5pt]
			\rho(x,0) = \rho_o(x) & & x \in \R \\[5pt]
			\left. F(s(t),\rho) \right|_{x=0} \leq q(t) & & t \in \mathopen]0,T \mathclose[,
		\end{array}
	\right.
\end{equation}

where $s \in \BV([0,T], [0,\Sigma])$ for some $\Sigma > 0$ and $q \in \BV([0,T], \R^{+})$. We suppose that $F \in \Ck{1}([0, \Sigma] \times [0,R], \R)$ and that for 
all $s \in [0,\Sigma]$, $\rho \mapsto F(s, \rho)$ is bell-shaped:
\begin{equation}
	\label{bell_shaped_appendix}
		F(s,0) = 0, \; F(s,R) \leq 0 \; \text{and} \;
		\exists! \; \overline{\rho}_{s} \in \mathopen]0,R \mathclose[, \; \p_{\rho}F(s,\rho) \left( \overline{\rho}_{s} - \rho \right) > 0 \; \text{for a.e.} \; \rho \in \mathopen]0,R \mathclose[.
\end{equation}

This framework covers the particular case when $F$ takes the form:
\[
	F(s(t),\rho) = f(\rho) - s(t) \rho,
\]

with bell-shaped $f : [0,R] \to \R^+$, which our model \eqref{AS2020} is based on. This 
class of models is well known, especially when the flux function is not 
time dependent, \textit{cf.} \cite{CG2007, AGS2010}. In this appendix, we establish in 
passing the well-posedness of Problem \eqref{AS_appendix}, but our main 
interest lies in the $\BV$ in space regularity of the solutions. More precisely, we aim 
at obtaining a bound on the total variation of the solutions to 
\eqref{AS_appendix}, using a finite volume approximation which allows for sharp control 
of the variation at the constraint. Note that the alternative offered by 
wave-front tracking would be cumbersome because of the explicit time-dependency in 
\eqref{AS_appendix}. In the general case, entropy solutions to limited flux 
problems like \eqref{AS_appendix} do not belong to 
$\L{\infty}(\mathopen]0,T \mathclose[, \BV(\R))$, see \cite{ADGV2011}. We will show 
that it is the case under a mild assumption on 
the constraint function $q$ -- see Assumption \eqref{level_constraint_appendix} below --  and provided that
\[
	\rho_o \in \L{1} \cap \BV (\R, [0,R]).
\]

Throughout the appendix, for all $s \in [0,\Sigma]$ and $a,b \in [0,R]$, we denote by
\[
	\Phi_{s}(a,b) = \text{sign}(a-b)(F(s,a) - F(s,b))
\]

the classical Kru{\v{z}}kov entropy flux associated with the Kru{\v{z}}kov entropy $\rho \mapsto |\rho-k|$, for all $k \in [0,R]$, see \cite{Kruzhkov1970}.

\subsection{Equivalent definitions of solution and uniqueness}

Let us first recall the following definition.

\begin{definition}
	\label{AWS_appendix}
	We say that $\rho \in \L{\infty}(\R \times \mathopen]0,T \mathclose[, \R)$ is an admissible weak solution to \eqref{AS_appendix} if

	(i) the following regularity is fulfilled: $\ds{\rho \in \Czero([0,T], \Lloc{1}(\R, \R))}$;

	(ii) for all test functions $\varphi \in \Cc{\infty}(\R \times \R^{+}, \R^+)$ and 
	$k \in [0,R]$, the following entropy inequalities are verified for all 
	$0 \leq \tau < \tau' \leq T$:
	\[
		\begin{aligned}
			& \int_{\tau}^{\tau'} \int_{\R} |\rho-k| \p_{t}\varphi + \Phi_{s(t)}(\rho,k) \p_{x} \varphi \; \d x \d t 
			+ \int_{\R} |\rho(x,\tau)-k|\varphi(x,\tau) \d x \\
			& - \int_{\R}|\rho(x,\tau')-k|\varphi(x,\tau') \d x + 2 \int_{\tau}^{\tau'} \cR_{s(t)}(k,q(t)) \varphi(0,t) \d t \geq 0,
		\end{aligned}
	\]

	where
	\[
		\cR_{s(t)}(k,q(t)) :=  F(s(t),k) - \min\left\{ F(s(t),k),q(t) \right\};
	\]

	(iii) for all test functions $\psi \in \Cc{\infty}([0,T], \R^+)$ and some given $\varphi \in \Cc{\infty}(\R, \R)$ which verifies $\varphi(0)=1$, the 
	following weak constraint inequalities are verified for all $0 \leq \tau < \tau' \leq T$:
	\[
		\begin{aligned}
			& - \int_{\tau}^{\tau'} \int_{\R^{+}} \rho \p_{t} (\varphi \psi) + F(s(t),\rho) \p_{x}(\varphi \psi) \ \d x \d t 
			- \int_{\R^{+}} \rho(x,\tau) \varphi(x) \psi(\tau) \d x \\
			& + \int_{\R^{+}} \rho(x,\tau') \varphi(x) \psi(\tau') \d x \leq \int_{\tau}^{\tau'} q(t) \psi(t) \d t.
		\end{aligned}
	\]
\end{definition}

\begin{definition}
	If $\rho$ is an admissible weak solution belonging to 
	$\L{\infty}(\mathopen]0,T \mathclose[, \BV(\R, \R))$, then we will say that it is 
	$\BV$-regular. 
\end{definition}

As we pointed out before, this notion of solution is well suited for passage to the limit of a.e. convergent sequences of exact or approximate solutions. However, 
it is not so well-adapted to prove uniqueness. An equivalent notion of solution, based on explicit treatment of traces of $\rho$ at the constraint, was introduced 
by the authors of \cite{AKR2011}. This notion of solution leads to the following stability estimate.

\begin{theorem}
	\label{AWS_uniqueness_appendix}
	Fix $s^1, s^2 \in \BV([0,T], [0,\Sigma])$, $\rho_o^1,\rho_o^2 \in \L{1} \cap \BV (\R, [0,R])$ and $q^1,q^2 \in \BV([0,T], \R^+)$. Denote by $\rho^1$ 
	a $\BV$-regular solution to \eqref{AS_appendix} with data $\rho_o^1, q^1, s^1$ and $\rho^2$ an admissible weak solution to \eqref{AS_appendix} with data 
	$\rho_o^2, q^2, s^2$. Suppose that the flux functions $(t,\rho) \mapsto F(s^1(t),\rho),F(s^2(t),\rho)$ satisfy \eqref{bell_shaped_appendix}. Then for all 
    $t \in [0, T]$, we have:
	\begin{equation}
		\label{stab_appendix}
		\begin{split}
			\|\rho^1(t) - \rho^2(t)\|_{\L{1}(\R)} 
            & \leq \|\rho_o^1 - \rho_o^2\|_{\L{1}(\R)} + 2 \int_{0}^{t} |q^1(\tau) - q^2(\tau)| \d{\tau} 
            + 2 \int_{0}^{t} \| F(s^1(\tau),\cdot) - F(s^2(\tau),\cdot)\|_{\L{\infty}} \d{\tau} \\
			& + \int_{0}^{t} \left|\left| \p_{\rho} F(s^1(\tau),\cdot) - \p_{\rho}F(s^2(\tau),\cdot) \right| \right|_{\L{\infty}} \TV(\rho^1(\tau)) \d{\tau}.
		\end{split}
	\end{equation}

	In particular, Problem \eqref{AS_appendix} admits at most one $\BV$-regular solution.
\end{theorem}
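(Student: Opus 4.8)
The plan is to establish \eqref{stab_appendix} by Kruzhkov's doubling of variables, localised so as to separate the interior dynamics from the flux-constrained interface $\{x=0\}$, following the trace-based theory of \cite{AKR2011}. First I would record that, since $\rho^1$ is $\BV$-regular, its one-sided traces $\rho^1(0^\pm,t)$ exist for a.e.\ $t$ and are themselves $\BV$ functions of time; this is all the regularity the argument needs, the comparison with the merely admissible weak solution $\rho^2$ being handled by the weak--strong device of \cite{AKR2011, CG2007}, in which the entropy inequalities of $\rho^2$ are tested against functions built from the traces of $\rho^1$. The key structural fact I would invoke is that the flux-constraint condition of Definition \ref{AWS_appendix} is equivalent, for the regular solution, to the Kruzhkov inequalities on each half-line together with the requirement that $(\rho^1(0^-,t),\rho^1(0^+,t))$ belong to the flux-limiting germ $\mathcal{G}^{s^1(t)}_{q^1(t)}$, and that this germ is $\L{1}$-dissipative.

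First I would carry out the doubling on the two half-planes $\{x<0\}$ and $\{x>0\}$ separately, where no constraint is active. Because the two solutions obey conservation laws with the distinct fluxes $F(s^1(t),\cdot)$ and $F(s^2(t),\cdot)$, the diagonalised Kato inequality for $\p_t|\rho^1-\rho^2|$ is not exact but carries a remainder measuring the flux mismatch. Using the $\Ck{1}$ regularity of $F$ in \eqref{bell_shaped_appendix} together with the uniform-in-time bound on $\TV(\rho^1)$, this remainder is controlled pointwise in time by $\|\p_\rho F(s^1(\tau),\cdot)-\p_\rho F(s^2(\tau),\cdot)\|_{\L{\infty}}\,\TV(\rho^1(\tau))$; integrating in $\tau$ yields exactly the last term of \eqref{stab_appendix}. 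This is the standard continuous-dependence-on-the-flux estimate, and it presents no real difficulty here.

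The hard part will be the interface. The half-plane doublings leave boundary contributions evaluated at the traces $x=0^\pm$, which I would assemble into a single interface functional. When $s^1=s^2$ and $q^1=q^2$ this functional is non-positive — it is precisely the dissipativity inequality for the common germ $\mathcal{G}^{s}_{q}$ — so the interface does not harm the contraction. The work is to bound the discrepancy produced when the germs differ: the mismatch in limiter levels contributes a term absorbed by $2|q^1(\tau)-q^2(\tau)|$, while the mismatch in the fluxes (hence in the entropy fluxes $\Phi_{s^1}$, $\Phi_{s^2}$ entering at the interface) contributes a term absorbed by $2\|F(s^1(\tau),\cdot)-F(s^2(\tau),\cdot)\|_{\L{\infty}}$. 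Carrying this out requires a case analysis on the relative positions of the traces with respect to $\overline{\rho}_{s^i}$ and the constrained flux levels, as in \cite{AKR2011, CG2007}; verifying the dissipativity of the time-dependent germ, and extracting these two sharp bounds from it, is the technical heart of the proof and the reason the trace formulation is indispensable.

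Finally I would combine the interior Kato inequality and the interface estimate into one integral inequality and choose test functions approximating $\1_{[0,\tau']}$ in time and $\1_{\R}$ in space — legitimate because $\rho^1-\rho^2 \in \L{1}(\R)$ — to obtain \eqref{stab_appendix} directly. No Gronwall step is needed, since $s^i$ and $q^i$ are prescribed data and the right-hand side contains no term proportional to $\|\rho^1(\tau)-\rho^2(\tau)\|_{\L{1}}$. Taking $\rho_o^1=\rho_o^2$, $q^1=q^2$ and $s^1=s^2$ collapses the right-hand side to zero, giving uniqueness of the $\BV$-regular solution.
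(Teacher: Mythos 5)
Your proposal follows essentially the same route as the paper's (deliberately sketched) proof: the germ/trace formulation of \cite{AKR2011} combined with Kruzhkov doubling of variables, the $\L{1}$-dissipativity of the germ controlling the interface with the constraint mismatch absorbed into $2|q^1-q^2|$, and the Bouchut--Perthame-type continuous dependence on the flux producing the $\TV(\rho^1)$ term, exactly as carried out in \cite{DMG2017, BP1998, CMR2009}. The only difference is bookkeeping: the paper proceeds in two stages, first establishing the estimate for $s^1=s^2$ (constraint mismatch only) and then superposing the flux perturbation, whereas you run a single doubling pass distributing the flux mismatch between the interior Kato remainder and the interface germ terms --- both organizations are sound and land on the same inequality \eqref{stab_appendix}.
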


\begin{proof}
	Since our interest to details lies rather on the numerical approximation point of view, we do not fully prove this statement, but we give the essential steps 
	leading to this stability result.

	$\bullet$ \textit{Definition of solution.} First, the authors of \cite{AKR2011} introduce a subset of $\R^2$ called \textit{germ}, which can be seen as the 
	set of all the possible traces of a solution to \eqref{AS_appendix}. Then, they say that $\rho$ is a solution to \eqref{AS_appendix} if it satisfies entropy 
	inequalities away from the interface -- \textit{i.e.} with $\varphi \in \Cc{\infty}(\R^* \times \R^+, \R^+)$ in the entropy inequalities -- and if the couple 
	constituted of left-side and the right-side traces of $\rho$ belongs to this so-called germ.
	
	$\bullet$ \textit{Equivalence of the two definitions.} The next step is to prove that this latter definition of solution is equivalent to Definition 
	\ref{AWS_appendix}. This part is done using good choices of test functions, see \cite[Theorem 3.18]{AKR2011} or \cite[Proposition 2.5, Theorem 2.9]{AGS2010}.

	$\bullet$ \textit{First stability estimate.} One first shows that if $s^1 = s^2$, then for all $t \in [0, T]$, one has 
	\begin{equation}
		\label{stab1_appendix}
			\|\rho^1(t) - \rho^2(t)\|_{\L{1}(\R)} \leq \|\rho_o^1 - \rho_o^2\|_{\L{1}(\R)} + 2 \int_{0}^{t} |q^1(\tau) - q^2(\tau)| \d \tau.
	\end{equation}
	
	The proof starts with the classical doubling of variables method of Kru{\v{z}}kov \cite[Theorem 1]{Kruzhkov1970} and then uses the so-called $\L{1}$-\textit{dissipativite} property, see \cite[Definition 3.1]{AKR2011} and \cite[Lemma 2.7]{AGS2010}. 

	$\bullet$ \textit{Proof of estimate} \eqref{stab_appendix}. The proof is based upon estimate \eqref{stab1_appendix} and elements borrowed from 
	\cite{BP1998, CMR2009}. Most details can be found in the proof of \cite[Theorem 2.1]{DMG2017}.
\end{proof}

\begin{remark}
	\label{on_traces}
    Though the definition of solutions with the germ explicitly involves the traces of $\rho$, we did not discuss the existence of such traces. A first way 
    to ensure such existence is to deal with $\BV$-regular solutions. That way, traces do exist and are to be understood in the sense of $\BV$ functions. 
    Outside the $\BV$ framework, existence of strong traces for solutions to \eqref{AS_appendix} is ensured provided an assumption on the fundamental diagram like 
    \eqref{non_deg}, see \cite{AM2013, NPS2018}. Finally, if one does not want to impose such a condition on the flux, (which is our case in this appendix), one 
    can follow what the authors of \cite{AKR2011} proposed (in Section 2) and consider the "singular mapping traces."
\end{remark}

\subsection{Existence of BV-regular solutions}

We now turn to the proof of the existence of $\BV$-regular solutions by the means of a finite volume scheme.

Fix $\rho_o \in \L{1}(\R, [0,R])$. For a fixed spatial mesh size $\Delta x$ and time mesh size $\Delta t$, let $x_{j} = j\Delta x $, $t^{n} = n \Delta t$. 
Define the grid cells $\bK_{j+1/2} = \mathopen]x_{j},x_{j+1}\mathclose[$ and $N \in \N^{*}$ such that $T \in [t^N, t^{N+1}\mathclose[$. We write
\[
	\R \times [0,T] \subset \bigcup_{n=0}^{N} \bigcup_{j \in \Z} \cP_{j+1/2}^{n}, \quad \cP_{j+1/2}^{n} = \bK_{j+1/2} \times [t^{n},t^{n+1}\mathclose[.
\]

Discretize the data with their mean values on each cell to obtain the sequences 
$(\rho_{j+1/2}^{0})_{j}$, $(s^{n})_{n}$ and $(q^{n})_{n}$. Following \cite{AGS2010}, 
the marching formula of the scheme is the following: for all $n \in [\![0; N-1]\!]$ 
and $j \in \Z$:
\begin{equation}
	\label{MF_appendix}
	\rho_{j+ 1/2}^{n+1} = \rho_{j+1/2}^{n} 
	- \frac{\Delta t}{\Delta x} 
	\left( \bF_{j+1}^{n}(\rho_{j+1/2}^{n},\rho_{j+3/2}^{n}) - \bF_{j}^{n}(\rho_{j-1/2}^{n},\rho_{j+1/2}^{n}) \right),
\end{equation}

where
\begin{equation}
	\label{num_flux_appendix}
	\bF_{j}^{n}(a,b) = 
	\left\{ 
		\begin{array}{cl} 
			\bF^{n}(a,b) & \text{if} \ j \neq 0 \\ 
			\min\left\{ \bF^{n}(a,b),q^{n}) \right\} & \text{if} \ j = 0, 
		\end{array} 
	\right.
\end{equation}

$\bF^{n}$ being a monotone consistent and Lipschitz numerical flux associated to 
$\rho \mapsto F(s^{n}, \rho)$. We then define
\[
	\rho_{\Delta}(x,t) = \rho_{j+1/2}^{n} \ \text{if} \ (x,t) \in \cP_{j+1/2}^{n} \quad \text{and} \quad  
	s_{\Delta}(t), q_{\Delta}(t) = s^{n}, q^n \ \text{if} \ t \in [t^{n},t^{n+1}\mathclose[.
\]

Let $\Delta = (\Delta x, \Delta t)$. For the convergence analysis, we will assume that $\Delta \to 0$, with $\lambda=\Delta t / \Delta x$, verifying the CFL 
condition
\begin{equation}
	\label{CFL_appendix}
	\lambda \underbrace{\sup_{s \in [0,\Sigma]} 
	\left( \left\| \frac{\p \bF^s}{\p a} \right\|_{\L{\infty}} + \left\| \frac{\p \bF^s}{\p b} \right\|_{\L{\infty}} \right)}_{:=L} \leq 1,
\end{equation}

where $\bF^s = \bF^s(a, b)$ is the numerical flux -- associated to $F(s,\cdot)$ -- we use in the scheme \eqref{MF_appendix}. From now, the analysis of the scheme
follows the same path as in Section \ref{Section3}. In that order, we prove that the scheme \eqref{MF_appendix}-\eqref{num_flux_appendix} is 
$\L{\infty}$ stable, satisfies discrete entropy inequalities similar to \eqref{DEI} and approximate entropy/constraint inequalities similar to 
\eqref{AEI}-\eqref{ACI}. Only the compactness for $(\rho_{\Delta})_{\Delta}$ is left to obtain since the $\Lloc{1}$ compactness for the sequences 
$(s_{\Delta})_{\Delta}$ and $(q_{\Delta})_{\Delta}$ is clear. One way to do so is to derive uniform $\BV$ bounds.

\begin{lemma}
	\label{compactness_density_appendix}
	We suppose that $\rho_o \in \L{1} \cap \BV (\R, [0,R])$ and that $q$ verifies the 
	assumption
	\begin{equation}
		\label{level_constraint_appendix}
		\exists \eps > 0, \ \forall t \in [0,T], \ \forall s \in [0,\Sigma], \ q(t) \leq \max_{\rho \in [0,R]} F(s,\rho) - \eps := q_{\eps}(s).
	\end{equation}

	Then there exists a constant $C_\eps$ depending on $\| \p_s F \|_{\L{\infty}}$ 
	such that for all $n \in [\![0; N-1]\!]$,
	\begin{equation}
		\label{BV_bound1_appendix}
		\TV(\rho_{\Delta}(t^{n+1})) 
		\leq \TV(\rho_o) + 4R + C_\eps \left( \sum_{k=0}^{n} |q^{k+1} - q^{k}| + \sum_{k=0}^{n} |s^{k+1} - s^{k}| \right),
	\end{equation}

	where $\rho_{\Delta} = \left( \rho^{n}_{j+1/2} \right)_{n,j}$ is the finite volume approximation constructed with the scheme 
	\eqref{MF_appendix}-\eqref{num_flux_appendix}, using the Godunov numerical flux when $j=0$ in \eqref{num_flux_appendix}.
\end{lemma}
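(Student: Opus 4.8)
The plan is to control the spatial total variation one time step at a time, with the flux level $s^n$ and the constraint level $q^n$ frozen within each step as in \eqref{MF_appendix}--\eqref{num_flux_appendix}, and then to sum the per-step increments. I would first recall that under the CFL condition \eqref{CFL_appendix} the scheme is monotone, so that away from the interface $x=0$ it is a standard conservative, translation-invariant monotone scheme for the frozen flux $F(s^n,\cdot)$. For such a scheme Harten's incremental-form argument applies and yields that the local variation $\sum_j |\rho^n_{j+3/2}-\rho^n_{j+1/2}|$ restricted to indices away from $\{-1,0,1\}$ is non-increasing over the step. The obstruction to a global statement is exactly the loss of translation invariance caused by the flux limitation $\min\{\bF^n,q^n\}$ at $j=0$, so the whole difficulty is concentrated at the interface.

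To handle the interface I would compare $\rho_\Delta$ to a stationary profile adapted to the constraint. For frozen data $(s^n,q^n)$ the non-saturation assumption \eqref{level_constraint_appendix} guarantees that $q^n < \max_\rho F(s^n,\rho)$, so the equation $F(s^n,\rho)=q^n$ has exactly two roots $\hat\rho^n < \overline{\rho}_{s^n} < \check\rho^n$ on the two monotone branches of the bell-shaped flux \eqref{bell_shaped_appendix}. The two-valued profile $c^n$ equal to $\check\rho^n$ on the cells $j\le -1$ and to $\hat\rho^n$ on the cells $j\ge 0$ is then a stationary solution of the one-step constrained scheme, and the choice of the Godunov flux at $j=0$ is precisely what makes this exact at the discrete level (cf. \cite{AGS2010}). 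I would then establish the key relative estimate $\TV(\rho^{n+1}-c^n)\le \TV(\rho^n-c^n)$, the discrete analogue of comparison with a steady state: away from the interface it reduces to the Harten argument above, while at the interface the constrained Godunov flux drives the traces $\rho^n_{-1/2},\rho^n_{1/2}$ monotonically toward $(\check\rho^n,\hat\rho^n)$, so that no spurious variation is created there. Since $\TV(c^n)\le R$, this bounds $\TV(\rho^{n+1})$ by $\TV(\rho^n-c^n)$ plus a fixed interface contribution.

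It then remains to turn the relative bounds at consecutive steps into the telescoped estimate \eqref{BV_bound1_appendix}. Writing $\TV(\rho^n-c^n)\le \TV(\rho^n-c^{n-1})+\TV(c^n-c^{n-1})$ and iterating down to $\TV(\rho^0-c^0)\le \TV(\rho_o)+R$, the increments $\TV(c^n-c^{n-1})$ accumulate into the sums on the right-hand side of \eqref{BV_bound1_appendix}. Since $c^n-c^{n-1}$ is again two-valued, $\TV(c^n-c^{n-1})\le |\check\rho^n-\check\rho^{n-1}|+|\hat\rho^n-\hat\rho^{n-1}|$, and I would invoke the Lipschitz dependence of the roots on the data $(s,q)$. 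This is where \eqref{level_constraint_appendix} is essential: the $\eps$-gap keeps both roots uniformly away from the critical point $\overline{\rho}_s$ where $\p_\rho F$ vanishes, so by the implicit function theorem $|\p_\rho F(s,\hat\rho)|$ and $|\p_\rho F(s,\check\rho)|$ stay bounded below by a positive quantity governed by $\eps$. Hence $|\check\rho^n-\check\rho^{n-1}|+|\hat\rho^n-\hat\rho^{n-1}|\le C_\eps(|s^n-s^{n-1}|+|q^n-q^{n-1}|)$ with $C_\eps$ proportional to $\|\p_s F\|_{\L{\infty}}$ and blowing up as $\eps\to 0$, which produces exactly the two sums in \eqref{BV_bound1_appendix}; the term $4R$ absorbs the fixed interface and initialization contributions.

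The main obstacle I anticipate is the interface estimate $\TV(\rho^{n+1}-c^n)\le\TV(\rho^n-c^n)$: showing that the constrained Godunov flux manufactures no variation at $x=0$ requires a careful case analysis of the discrete traces $\rho^n_{-1/2},\rho^n_{1/2}$ against the stationary values $\check\rho^n,\hat\rho^n$, exploiting both monotonicity and the explicit form of the Godunov flux, and is the reason the lemma is stated with the Godunov flux at $j=0$. Once this discrete comparison-with-steady-state is in hand, the flux- and constraint-perturbation terms and the summation over $n$ are routine, and the non-saturation assumption enters only through the Lipschitz control of the roots that fixes the constant $C_\eps$.
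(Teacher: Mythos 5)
Your architecture coincides with the paper's proof: the paper obtains the estimate by invoking precisely your ``relative TV'' comparison with the two-valued stationary profile at the interface (it cites \cite[Section~2]{CancesSeguin2012} for this step rather than redoing the case analysis), with per-step increments controlled by the variation of the pair $\bigl(\widehat{\rho}_{s^k}(q^k),\widecheck{\rho}_{s^k}(q^k)\bigr)$, and it then proves the Lipschitz dependence of these roots on $(s,q)$ exactly as you propose: Assumption \eqref{level_constraint_appendix} keeps both roots outside an $\eps$-neighbourhood of the crest $\overline{\rho}_s$, so $|\p_\rho F|$ admits a positive lower bound $\mathrm{C}_0$ there, the monotone branches of $F(s,\cdot)$ are uniformly bi-Lipschitz, and one gets $|\widehat{\rho}_{s^{k+1}}(q^{k+1})-\widehat{\rho}_{s^k}(q^k)|+|\widecheck{\rho}_{s^{k+1}}(q^{k+1})-\widecheck{\rho}_{s^k}(q^k)|\leq C_\eps\bigl(|q^{k+1}-q^k|+|s^{k+1}-s^k|\bigr)$ with $C_\eps=4(1+\|\p_s F\|_{\L{\infty}})/\mathrm{C}_0$. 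So the plan, the telescoping, and the role of the $\eps$-gap are all right.

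There is, however, one genuine error in the key step: the orientation of your profile $c^n$. You place the small root $\widecheck{\rho}^n:=\widecheck{\rho}_{s^n}(q^n)$ on the cells $j\leq-1$ and the large root $\widehat{\rho}^n$ on $j\geq 0$; that is the \emph{classical} stationary shock, whereas the profile selected by the constraint (the germ element, i.e.\ the state the constrained solution relaxes to, with congestion \emph{upstream} of the bottleneck) is $\widehat{\rho}^n$ for $j\leq -1$ and $\widecheck{\rho}^n$ for $j\geq 0$. Both orientations happen to be exact discrete steady states of the constrained Godunov scheme, but the comparison inequality $\TV(\rho^{n+1}-c^n)\leq\TV(\rho^n-c^n)$ fails for yours. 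Concretely, take $\rho^n\equiv\rho^*$ with $F(s^n,\rho^*)>q^n$ and set $\theta:=F(s^n,\rho^*)-q^n>0$: one step of the scheme gives $\rho^{n+1}_{-1/2}=\rho^*+\lambda\theta$, $\rho^{n+1}_{1/2}=\rho^*-\lambda\theta$ and leaves all other cells unchanged, so with your $c^n$ one computes $\TV(\rho^{n+1}-c^n)=(\widehat{\rho}^n-\widecheck{\rho}^n)+4\lambda\theta>\widehat{\rho}^n-\widecheck{\rho}^n=\TV(\rho^n-c^n)$, while with the non-classical orientation the interface jump of $\rho^{n+1}-c^n$ \emph{shrinks} by $2\lambda\theta$ and compensates the two side jumps, giving equality. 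The point is that the constrained interface drives the traces toward $(\widehat{\rho}^n,\widecheck{\rho}^n)$, not toward $(\widecheck{\rho}^n,\widehat{\rho}^n)$, so only the former absorbs the variation manufactured at $x=0$. Once the profile is flipped, your interface case analysis is exactly the content of \cite[Section~2]{CancesSeguin2012}, and the remainder of your argument (Harten/TVD away from the interface, telescoping with $\TV(c^n-c^{n-1})$, the term $4R$ absorbing the profile and initialization contributions) reproduces the paper's proof.
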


\begin{proof}
	Fix $n \in [\![0; N-1]\!]$. With this set up we can follow the proofs of \cite[Section 2]{CancesSeguin2012} to obtain the following estimate:
	\[
        \sum_{j \in \Z} |\rho_{j+1/2}^{n+1} - \rho_{j-1/2}^{n+1}| 
        \leq \TV(\rho_o) + 4R + 2 \sum_{k=0}^{n} \left| \left( \widehat{\rho}_{s^{k+1}}(q^{k+1}) - \widehat{\rho}_{s^{k}}(q^{k}) \right) 
        - \left(\widecheck{\rho}_{s^{k+1}}(q^{k+1}) - \widecheck{\rho}_{s^k}(q^{k}) \right) \right|,
	\]

	where for all $k \in [\![0; n]\!]$, the couple $\left( \widehat{\rho}_{s^k}(q^{k}), \widecheck{\rho}_{s^k}(q^{k}) \right) \in [0,R]^{2}$ is uniquely 
	defined by the conditions
	\[
		F(s^{k},\widehat{\rho}_{s^k}(q^{k})) = F(s^{k},\widecheck{\rho}_{s^k}(q^{k})) = q^{k} \quad \text{and} \quad 
		\widehat{\rho}_{s^{k}}(q^{k}) > \widecheck{\rho}_{s^k}(q^{k}).
	\]

	Denote by $\Omega(\eps)$ the open subset
	\[
		\Omega(\eps) = \bigcup_{s \in [0,\Sigma]} \Omega_{s}(\eps)
	\]

	where for all $s \in [0,\Sigma]$, $\Omega_{s}(\eps) := \mathopen] \widecheck{\rho}_{s}(q_\eps (s)), \widehat{\rho}_{s}(q_\eps (s)) \mathclose[$. By Assumption 
	\eqref{level_constraint_appendix}, the continuous function $(s,\rho) \mapsto |\p_{\rho}F(s,\rho)|$ is positive on the compact subset 
	$[0,\Sigma] \times [0,R] \backslash \Omega(\eps)$. Hence, it attains its minimal value $\text{C}_{0} > 0$. Consequently, for all $s \in [0,\Sigma]$, if one 
	denotes by $I_{s} : [0,\widecheck{\rho}_{s}(q_\eps (s))] \to [0,q_{\eps}(s)]$ the increasing part of $F(s,\cdot)$, this function carries out a 
	$\Ck{1}$-diffeomorphism. Moreover,
	\[
		\forall q \in [0,q_{\eps}(s)], \ \left|(I_{s}^{-1})^{'}(q) \right| \leq \frac{1}{\text{C}_{0}}.
	\]

	Then, for all $k \in [\![0; n]\!]$,
	\[
		\begin{split}
			\left| \widecheck{\rho}_{s^{k+1}}(q^{k+1}) - \widecheck{\rho}_{s^{k}}(q^{k}) \right| 
			& = \left| (I_{s^{k+1}}^{-1})(q^{k+1}) - \widecheck{\rho}_{s^{k}}(q^{k})  \right|  \\
			& \leq \frac{1}{\text{C}_{0}} |q^{k+1} - q^{k}| + \left| (I_{s^{k+1}}^{-1})(q^{k}) - \widecheck{\rho}_{s^{k}}(q^{k}) \right| \\
			& = \frac{1}{\text{C}_{0}} |q^{k+1} - q^{k}| 
			+ \left| (I_{s^{k+1}}^{-1})(q^{k}) - (I_{s^{k+1}}^{-1}) \ \circ \ I_{s^{k+1}} \left( \widecheck{\rho}_{s^{k}}(q^{k})  \right) \right| \\
			& \leq \frac{1}{\text{C}_{0}} \left( |q^{k+1} - q^{k}| + \left| q^{k} - I_{s^{k+1}} \left( \widecheck{\rho}_{s^{k}}(q^{k}) \right) \right| \right) \\
			& = \frac{1}{\text{C}_{0}} \left( |q^{k+1} - q^{k}| 
			+ \left| F \left(s^{k}, \widecheck{\rho}_{s^{k}}(q^{k}) \right) - F\left(s^{k+1}, \widecheck{\rho}_{s^{k}}(q^{k}) \right) \right| \right)\\
			& \leq \frac{1}{\text{C}_{0}} \left( |q^{k+1} - q^{k}| + \| \p_s F \|_{\L{\infty}} |s^{k+1}-s^{k}| \right) \\
			& \leq \frac{1 + \| \p_s F \|_{\L{\infty}}}{\text{C}_{0}} \left( |q^{k+1} - q^{k}| + |s^{k+1}-s^{k}| \right).
		\end{split}
	\]

	Using the same techniques, one can show that the same inequality holds when considering 
	$\left| \widehat{\rho}_{s^{k+1}}(q^{k+1}) - \widehat{\rho}_{s^{k}}(q^{k}) \right|$. Therefore, inequality \eqref{BV_bound1_appendix} follows with
	\[
		C_\eps = 4 \times \left( \frac{1 + \| \p_s F \|_{\L{\infty}}}{\text{C}_{0}} \right).
	\]
\end{proof}

\begin{remark}
	Recall we suppose that $F : [0,\Sigma] \times [0,R]$ is continuously differentiable, but if we look in the details of the proof above, we actually need 
	$F = F(s,\rho)$ to be continuously differentiable with respect to $s$ and
	\[
		\forall s \in [0,\Sigma], \ F(s,\cdot) \in \Ck{1}([0,R] \backslash \{ \overline{\rho}_s\}, \R), \quad 
		\overline{\rho}_s = \underset{\rho \in [0,R]}{\text{argmax}} \ F(s,\rho).
	\]
\end{remark}

\begin{corollary}
	Fix $\rho_o \in \L{1} \cap \BV(\R, [0,R])$, $s \in \BV([0,T], [0,\Sigma])$ and $q \in \BV([0,T],\R^+)$. Suppose that $q$ verifies Assumption 
	\eqref{level_constraint_appendix}. Let $\rho_{\Delta} = \left( \rho^{n}_{j+1/2} \right)_{n,j}$ be the finite volume approximate solution constructed 
	with the scheme \eqref{MF_appendix}-\eqref{num_flux_appendix}, using the Godunov numerical flux when $j=0$ in \eqref{num_flux_appendix}, and any other monotone 
	consistent and Lipschitz numerical flux when $j \neq 0$. Then there exists $\rho \in \Czero([0,T], \Lloc{1}(\R, \R))$ such that
	\[
		\forall t \in [0,T], \ \rho_{\Delta}(t) \limit{\Delta}{0} \rho(t) \ \emph{in} \ \Lloc{1}(\R, \R).
	\]
\end{corollary}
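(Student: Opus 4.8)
The plan is to upgrade the uniform $\BV$ bound of Lemma \ref{compactness_density_appendix} into space-time compactness, following the scheme of the proof of Theorem \ref{convergence_BV}. First I would make \eqref{BV_bound1_appendix} uniform in both $\Delta$ and $t$: since $(q^{n})_{n}$ and $(s^{n})_{n}$ are cell-averages of $q$ and $s$, and averaging does not increase the total variation, one has $\sum_{k=0}^{n}|q^{k+1}-q^{k}| \leq \TV(q)$ and $\sum_{k=0}^{n}|s^{k+1}-s^{k}| \leq \TV(s)$ for every $n$. Using the Godunov flux at $j=0$ (so that Lemma \ref{compactness_density_appendix} applies), this yields a constant $M = \TV(\rho_o)+4R+C_\eps(\TV(q)+\TV(s))$, independent of $\Delta$, such that $\TV(\rho_{\Delta}(t)) \leq M$ for all $t \in [0,T]$. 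Combined with the $\L{\infty}$ stability $\rho_{\Delta}(x,t) \in [0,R]$, the family $(\rho_{\Delta})_{\Delta}$ is then uniformly bounded in $\L{\infty}(\mathopen]0,T\mathclose[, \BV(\R))$.

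Next I would record an $\L{1}$ modulus of continuity in time. Using the marching formula \eqref{MF_appendix}, the Lipschitz continuity of the numerical flux and the CFL condition \eqref{CFL_appendix}, one bounds $\sum_{j}|\rho_{j+1/2}^{n+1}-\rho_{j+1/2}^{n}|\,\Delta x$ by $\Delta t$ times the spatial variation, whence $\|\rho_{\Delta}(t)-\rho_{\Delta}(\tau)\|_{\L{1}} \leq C(|t-\tau|+\Delta t)$ with $C$ independent of $\Delta$. The uniform spatial $\BV$ bound together with this time-equicontinuity are exactly the hypotheses of the time-dependent compactness theorem (\cite[Theorem A.8]{HRBook}, or the variant in \cite[Appendix]{DE2016}), which then delivers, along a subsequence, a limit $\rho \in \Czero([0,T], \Lloc{1}(\R,\R))$ with $\rho_{\Delta}(t) \to \rho(t)$ in $\Lloc{1}(\R,\R)$ for every $t \in [0,T]$; lower semicontinuity of the $\BV$ seminorm gives $\TV(\rho(t)) \leq M$, so $\rho \in \L{\infty}(\mathopen]0,T\mathclose[, \BV(\R))$.

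To conclude for the whole family $\Delta \to 0$ rather than for a subsequence, I would identify the limit. Running the passage to the limit of Section \ref{Section3} in the approximate entropy/constraint inequalities satisfied by the scheme shows that $\rho$ is an admissible weak solution to \eqref{AS_appendix} in the sense of Definition \ref{AWS_appendix}, and the spatial $\BV$ bound makes it $\BV$-regular; Theorem \ref{AWS_uniqueness_appendix} then forces every subsequential limit to coincide, so the entire family converges, which is the claim. The main obstacle is not the compactness step itself — once \eqref{BV_bound1_appendix} is available it is essentially packaged in the cited Helly-type theorem — but rather the $\L{1}$ time-equicontinuity in the presence of the time-dependent, flux-constrained interface at $x=0$: I would have to check that the modification \eqref{num_flux_appendix} of the numerical flux at $j=0$ contributes only a term of order $\Delta t$ and does not degrade the estimate.
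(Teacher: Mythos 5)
Your proof is correct and follows essentially the same route as the paper, whose own proof consists of exactly two observations: since $s$ and $q$ have bounded variation, the bound \eqref{BV_bound1_appendix} of Lemma \ref{compactness_density_appendix} gives a spatial $\BV$ bound for $(\rho_\Delta)_\Delta$ uniform in $\Delta$ and in $t$, and the compactness statement then follows from the result in the appendix of \cite{DE2016}. The extra material you supply --- the explicit $\L{1}$ time-equicontinuity estimate $\|\rho_{\Delta}(t)-\rho_{\Delta}(\tau)\|_{\L{1}} \leq C(|t-\tau|+\Delta t)$ together with the (correct) check that the constrained flux \eqref{num_flux_appendix} at $j=0$ contributes only a bounded per-step term, and the identification-plus-uniqueness argument upgrading subsequential convergence to convergence of the whole family --- is precisely what the paper delegates to the citation \cite{DE2016} and to the subsequent Theorem \ref{convergence_appendix} and Corollary \ref{BVRS_WP_appendix}, and you handle it correctly.
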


\begin{proof}
	Since $s$ and $q$ have bounded variation, inequality \eqref{BV_bound1_appendix} leads to a uniform in time $\BV$ bound for the sequence 
	$\left( \rho_{\Delta} \right)_\Delta$. Then the result from \cite[Appendix]{DE2016} establish the compactness statement.
\end{proof}

\begin{theorem}
	\label{convergence_appendix}
	Fix $\rho_o \in \L{1} \cap \BV (\R, [0,R])$, $s \in \BV([0,T], [0,\Sigma])$, $F \in \Ck{1}([0,\Sigma] \times [0,R], \R)$ verifying 
	\eqref{bell_shaped_appendix} and $q \in \BV([0,T], \R^{+})$. Suppose that in \eqref{num_flux_appendix}, we use the Godunov flux when $j=0$ and any 
	other monotone consistent and Lipschitz numerical flux when $j \neq 0$. Finally, suppose that $q$ satisfies \eqref{level_constraint_appendix}. Then under 
	the CFL condition \eqref{CFL_appendix}, the scheme \eqref{MF_appendix}-\eqref{num_flux_appendix} converges to an admissible weak solution $\rho$,
	to \eqref{AS_appendix}, which is also $\BV$-regular. More precisely, there exists a 
	constant $C_\eps$ depending on $\| \p_s F \|_{\L{\infty}}$ such that
	\begin{equation}
		\forall t \in [0,T], \ \TV(\rho(t)) \leq \TV(\rho_o) + 4R + C_\eps \left( \TV(q) + \TV(s) \right).
	\label{BV_bound_appendix}
	\end{equation}
\end{theorem}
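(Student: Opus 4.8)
The plan is to collect the stability, consistency and compactness facts already assembled for the scheme \eqref{MF_appendix}-\eqref{num_flux_appendix} and to pass to the limit exactly along the lines of Theorem \ref{convergence_OSLC}, the only genuinely delicate estimate — the control of the total variation created at the flux constraint — being already encapsulated in Lemma \ref{compactness_density_appendix}. First I would record that, as announced before the statement, the scheme is $\L{\infty}$ stable ($\rho_\Delta \in [0,R]$) and satisfies discrete entropy inequalities and approximate entropy/constraint inequalities that are the verbatim analogues of \eqref{DEI} and \eqref{AEI}-\eqref{ACI}; their derivation is insensitive to the fact that here the flux depends on time only through the cell values $s^n$. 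The Corollary preceding the theorem already supplies a limit $\rho \in \Czero([0,T], \Lloc{1}(\R, \R))$ with $\rho_\Delta(t) \to \rho(t)$ in $\Lloc{1}(\R, \R)$ for every $t \in [0,T]$; combined with the uniform bound $\rho_\Delta \in [0,R]$ and dominated convergence, this gives, up to a subsequence, the almost everywhere convergence of $(\rho_\Delta)_\Delta$ to $\rho$ on $\R \times \mathopen]0,T \mathclose[$.

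Next I would pass to the limit in the approximate inequalities. The piecewise-constant interpolants $s_\Delta$ and $q_\Delta$ are cell averages of the $\BV$ data $s$ and $q$, hence converge to them in $\L{1}$ and almost everywhere. Together with the almost everywhere convergence of $(\rho_\Delta)_\Delta$ and the consistency of the numerical flux $\bF^s$ — continuous in $s$ since $F \in \Ck{1}$ — the discrete entropy flux converges to $\Phi_{s(t)}(\rho,k)$ and the interface residual $\cR_{s_\Delta(t)}(k,q_\Delta(t))$ converges to $\cR_{s(t)}(k,q(t))$. Letting $\Delta \to 0$, the error terms $c_i(\Delta x + \Delta t)$ vanish and I obtain, first for almost every $0 \leq \tau < \tau' \leq T$ and then — by continuity of both sides in $(\tau,\tau')$, exactly as in Theorem \ref{convergence_OSLC} — for all such pairs, the entropy and constraint inequalities of Definition \ref{AWS_appendix}. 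The time-continuity regularity is already provided by the preceding Corollary (and could alternatively be recovered from the entropy inequalities away from the interface via Remark \ref{Rk2}), so $\rho$ is an admissible weak solution to \eqref{AS_appendix}.

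Finally, for the quantitative bound \eqref{BV_bound_appendix} I would pass to the limit in \eqref{BV_bound1_appendix}. Because $(s^k)_k$ and $(q^k)_k$ are cell averages and averaging does not increase the total variation, one has $\sum_{k=0}^{n}|s^{k+1}-s^k| \leq \TV(s)$ and $\sum_{k=0}^{n}|q^{k+1}-q^k| \leq \TV(q)$; hence for every $t \in [0,T]$, picking $n$ with $t \in [t^n, t^{n+1} \mathclose[$,
\[
\TV(\rho_\Delta(t)) = \TV(\rho_\Delta(t^n)) \leq \TV(\rho_o) + 4R + C_\eps(\TV(q) + \TV(s))
\]
uniformly in $\Delta$. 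The lower semicontinuity of the total variation under the convergence $\rho_\Delta(t) \to \rho(t)$ in $\Lloc{1}(\R, \R)$ then yields \eqref{BV_bound_appendix} and, in particular, $\rho \in \L{\infty}(\mathopen]0,T \mathclose[, \BV(\R, \R))$, i.e. $\rho$ is $\BV$-regular. I expect no serious obstacle here: the main difficulty of the whole argument, the constraint-compatible $\BV$ estimate, has been settled in Lemma \ref{compactness_density_appendix}, and what remains is a routine limit passage; the only point requiring mild care is the joint convergence of the $s$-dependent numerical fluxes, which is guaranteed by the $\Ck{1}$ regularity of $F$.
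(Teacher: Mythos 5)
Your proposal is correct and follows essentially the same route as the paper's own proof, which is itself a condensed version of your argument: compactness and time-continuity from the Corollary built on Lemma \ref{compactness_density_appendix}, passage to the limit in the approximate entropy/constraint inequalities as in Theorem \ref{convergence_OSLC}, and lower semicontinuity of the $\BV$ semi-norm applied to \eqref{BV_bound1_appendix} to get \eqref{BV_bound_appendix}. The details you add beyond the paper's sketch --- the a.e.\ convergence of $s_\Delta$, $q_\Delta$ as cell averages of $\BV$ data, the bound $\sum_k |s^{k+1}-s^k| \leq \TV(s)$ since averaging does not increase variation, and the continuity argument in $(\tau,\tau')$ --- are all accurate and are exactly what the paper leaves implicit.
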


\begin{proof} 
	From the scheme \eqref{MF_appendix}, one can derive approximate entropy/constraint inequalities analogous to \eqref{AEI}-\eqref{ACI} of Section \ref{Section3}. 
	Let $\rho$ be the limit to the finite volume scheme, the compactness of $\left( \rho_{\Delta} \right)_\Delta$ coming from the last corollary. We already know 
	that $\rho \in \Czero([0,T], \Lloc{1}(\R, \R))$. By passing to the limit in the approximate entropy/constraint inequalities verified by 
	$\left( \rho_{\Delta} \right)_\Delta$ we get that $\rho$ satisfies the entropy/constraint inequalities of Definition \ref{AWS_appendix}. This shows that 
	$\rho$ is an admissible weak solution to Problem \eqref{AS_appendix}. Finally, from \eqref{BV_bound1_appendix}, the lower semi-continuity of the $\BV$ 
	semi-norm ensures that $\rho \in \L{\infty} ([0,T], \BV(\R, \R))$ and verifies \eqref{BV_bound_appendix}. This concludes the proof.
\end{proof}

\begin{corollary}
	\label{BVRS_WP_appendix}
	Fix $\rho_o \in \L{1} \cap \BV (\R, [0,R])$, $s \in \BV([0,T], [0,\Sigma])$, $F \in \Ck{1}([0,\Sigma] \times [0,R], \R)$ verifying 
    \eqref{bell_shaped_appendix} and $q \in \BV([0,T], \R^{+})$. Suppose that $q$ satisfies Assumption \eqref{level_constraint_appendix}. Then Problem 
    \eqref{AS_appendix} admits a unique $\BV$-regular solution $\rho$. Moreover, $\rho$ satisfies the bound \eqref{BV_bound_appendix}.
\end{corollary}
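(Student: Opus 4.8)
The plan is to obtain this result as a direct synthesis of the two main outputs of the appendix: the existence-plus-bound statement of Theorem \ref{convergence_appendix} and the uniqueness contained in Theorem \ref{AWS_uniqueness_appendix}. No new argument is really needed; the work consists in verifying that the hypotheses line up and then combining the two conclusions.

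First I would check that the assumptions of the corollary are exactly those required to run Theorem \ref{convergence_appendix}. Indeed, we are given $\rho_o \in \L{1} \cap \BV(\R, [0,R])$, $s \in \BV([0,T], [0,\Sigma])$, $F \in \Ck{1}([0,\Sigma] \times [0,R], \R)$ satisfying \eqref{bell_shaped_appendix}, and $q \in \BV([0,T], \R^{+})$ verifying \eqref{level_constraint_appendix}. It then suffices to construct the finite volume approximation \eqref{MF_appendix}-\eqref{num_flux_appendix} using the Godunov flux at the interface $j=0$ (and any monotone consistent Lipschitz flux elsewhere), under the CFL condition \eqref{CFL_appendix}. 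Theorem \ref{convergence_appendix} guarantees that this scheme converges to an admissible weak solution $\rho$ to \eqref{AS_appendix} which is moreover $\BV$-regular and satisfies the bound \eqref{BV_bound_appendix}. This settles existence and the total variation estimate at once.

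For uniqueness, I would invoke Theorem \ref{AWS_uniqueness_appendix} with identical data. Let $\rho^1$ and $\rho^2$ be two $\BV$-regular solutions associated with the same triple $(\rho_o, s, q)$. Applying the stability estimate \eqref{stab_appendix} with $s^1 = s^2 = s$, $\rho_o^1 = \rho_o^2 = \rho_o$ and $q^1 = q^2 = q$, every term on the right-hand side vanishes: the initial-data difference, the $q$-difference, and both flux-difference terms are zero. Hence $\|\rho^1(t) - \rho^2(t)\|_{\L{1}(\R)} = 0$ for all $t \in [0,T]$, so $\rho^1 = \rho^2$. Combined with the existence produced above, this shows that Problem \eqref{AS_appendix} admits exactly one $\BV$-regular solution, namely the one delivered by the scheme, which therefore satisfies \eqref{BV_bound_appendix}.

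Since each ingredient is already established, there is essentially no genuine obstacle here; the only point demanding a modicum of care is to make sure the two theorems are compatible at their interface. Specifically, one must check that the solution constructed by the scheme is genuinely $\BV$-regular, so that it qualifies as an admissible comparison object in the uniqueness argument, and that the estimate \eqref{stab_appendix} remains applicable when only one of the two solutions is assumed $\BV$-regular while the other is merely an admissible weak solution. Both of these facts are exactly what Theorems \ref{convergence_appendix} and \ref{AWS_uniqueness_appendix} provide, so the combination goes through without further effort.
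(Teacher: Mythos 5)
Your proposal is correct and matches the paper's own proof, which likewise obtains uniqueness from the stability estimate \eqref{stab_appendix} of Theorem \ref{AWS_uniqueness_appendix} and existence together with the bound \eqref{BV_bound_appendix} from Theorem \ref{convergence_appendix}. Your additional care at the interface (noting that the constructed solution is genuinely $\BV$-regular, so the stability estimate applies with all right-hand-side terms vanishing) is exactly the implicit content of the paper's two-line argument.
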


\begin{proof}
	Uniqueness comes from Theorem \ref{stab_appendix}, the existence and the $\BV$ bound comes from Theorem \ref{convergence_appendix}.
\end{proof}

\begin{remark}
	\label{weak_strong_appendix}
	Under the hypotheses of Corollary \ref{BVRS_WP_appendix}, if we prove the existence of another admissible weak solution $\overline{\rho}$ to 
	\eqref{AS2020_splitting} (by another method, splitting for instance), then Theorem \ref{AWS_uniqueness_appendix} ensures that $\overline{\rho} = \rho$.
\end{remark}

\newpage

{\small
  \bibliography{ref}
  \bibliographystyle{abbrv}
}

\end{document}